\documentclass{article} 
\usepackage{setspace, ifdraft, blindtext}
 \synctex=1
\usepackage{authblk}
\usepackage{enumerate}
\usepackage{kantlipsum}
\usepackage{tikz}
\usepackage{pgf,tikz}
\usetikzlibrary{arrows}
\usetikzlibrary[patterns]
\usepackage{graphicx}
\usepackage{caption}
\usepackage{subcaption}
\usepackage{graphicx}
\usepackage{wrapfig}

\usepackage{amsmath, amssymb, amsthm}
\usepackage{color}
\usepackage{kantlipsum}
\usepackage{pgf,tikz}
\usepackage{mathrsfs}
\usepackage{tikz}
\usepackage{pgf,tikz}
\usetikzlibrary[patterns]
\usepackage{graphicx}
\usepackage{caption}
\usepackage{subcaption}
\usepackage{algorithmicx}
\usepackage{algorithm}
\usepackage[noend]{algpseudocode}
\usepackage[margin=1in]{geometry}

\usepackage{amsmath, amssymb, amsthm}
\usepackage{color}

\usepackage{xr}

\makeatletter
\def\BState{\State\hskip-\ALG@thistlm}
\makeatother

\theoremstyle{plain}
\newtheorem{thm}{Theorem}[section]

\newtheorem*{prop*}{Proposition}
\newtheorem{lem}[thm]{Lemma}
\newtheorem*{lem*}{Lemma}
\newtheorem{dfn}[thm]{Definition}
\newtheorem{cor}[thm]{Corollary}
\newtheorem{claim}[thm]{Claim}

\newtheorem*{rem}{Remark}

\newcommand{\ta}{\tilde \alpha}
\newcommand{\dist}{\text{dist}}
\newcommand{\PP}{\mathcal{P}}
\newcommand{\cS}{\mathcal{S}}
\newcommand{\cF}{\mathcal{F}}

\newcommand{\Active}{\textsc{Active}}
\newcommand{\Frozen}{\textsc{Frozen}}
\newcommand{\Mature}{\textsc{Mature}}
\newcommand{\Rich}{\textsc{Rich}}
\newcommand{\Can}{\textsc{Candidate}}

\newcommand{\NR}{\textsc{NResolve}}
\newcommand{\D}{D} 


\newcommand{\pn}{\rho_\text{EMD}}
\newcommand{\pc}{\rho_\text{r}}
\newcommand{\ps}{\rho_\text{s}}
\newcommand{\pd}{\rho_\text{d}}
\newcommand{\pds}{\rho_{\text{ds}}}
\newcommand{\pkt}{\rho_{\text{KT}}}
\newcommand{\ppt}{\rho_{\text{PT}}} 

\newcommand{\beq}{\begin{equation}}
\newcommand{\eeq}{\end{equation}}

\date{}
\title{\vspace{-0.7cm} Permutation Testing}

\begin{document}
\definecolor{ffqqqq}{rgb}{1.,0.,0.}
\definecolor{ttqqqq}{rgb}{0.2,0.,0.}

\makeatletter
\algrenewcommand\algorithmiccomment[2][\normalsize]{{#1\hfill\(\triangleright\) #2}}

\makeatother

\date{\today}
\title{\vspace{-0.7cm} Fast property testing and metrics for permutations}

\author[1]{Jacob Fox\thanks{Department of Mathematics, Stanford University, Stanford, CA 94305. Email: {\tt jacobfox@stanford.edu}. Research partially supported by a Packard Fellowship, by NSF Career Award DMS-1352121 and by an Alfred P. Sloan Fellowship.} \and Fan Wei\thanks{Department of Mathematics, Stanford University, Stanford, CA 94305. Email: {\tt fanwei@stanford.edu}.}}

\maketitle

\begin{abstract}
The goal of property testing is to quickly distinguish between objects which satisfy a property and objects that are $\epsilon$-far from satisfying the property. There are now several general results in this area which show that natural properties of combinatorial objects can be tested with ``constant'' query complexity, depending only on $\epsilon$ and the property, and not on the size of the object being tested. The upper bound on the query complexity coming from the proof techniques is often enormous and impractical. It remains a major open problem if better bounds hold.

Maybe surprisingly, for testing with respect to the rectangular distance, we prove there is a universal (not depending on the property), polynomial in $1/\epsilon$ query complexity bound for two-sided testing hereditary properties of sufficiently large permutations. We further give a nearly linear bound with respect to a closely related metric which also depends on the smallest forbidden subpermutation for the property. Finally, we show that several different permutation metrics of interest are related to the rectangular distance, yielding similar results for testing with respect to these metrics.

\medskip

\noindent AMS subject classification:  68W20, 68R05, 05D40, 05A05, 68R15
\end{abstract}

\section{Introduction}

Traditionally, algorithms that run in time polynomial in the input size were considered fast. However, as the desired input size has increased, this notion of fast is sometimes insufficient. Some examples include in algorithmic problems on networks like the internet or the brain, or in ranking websites for search algorithms, in which the structures being studied have billions of elements and are often not well understood. In order to handle such large structures, sublinear time algorithms are desired. One would not expect for such algorithms to be able to determine properties of the structures with certainty. This is where property testing comes in. 

The goal of property testing is to quickly distinguish between objects which satisfy a property and objects that are $\epsilon$-far from satisfying the property. The study of this notion was initiated by Rubinfield and Sudan \cite{RS}.  Subsequently, Goldreich, Goldwasser, and Ron \cite{GGR} began the investigation of property testers for combinatorial objects. There are now several quite general results in this area which show that properties can be tested with ``constant'' query complexity, depending only on $\epsilon$ and the property, and not on the size of the object being tested. A property $P$ is {\it one-sided testable} if there is a function $q(\epsilon)$ and a randomized algorithm with query complexity $q(\epsilon)$ which, on an input which has property $P$, correctly outputs that the object has property $P$, and on input that is $\epsilon$-far from satisfying $P$, correctly outputs with probability at least $2/3$ that the object is $\epsilon$-far from satisfying $P$. Property $P$ is {\it two-sided testable} if it correctly outputs in either case with probability at least $2/3$. Note that if an input neither satisfies $P$ nor is $\epsilon$-far from $P$, it has no guarantee on the output. An exemplary result in this area, due to Alon and Shapira \cite{AS08}, states that every hereditary graph property is one-sided testable. However the query complexity bound it gives is at least of wowzer-type in $1/\epsilon$, which is one level higher in the Ackermann hierarchy than the tower function, as it uses the strong regularity lemma. Enormous bounds like this on the query complexity are typical of many general results in this area and is a major drawback as the bounds are impractical. Besides regularity methods, which often give tower-type or worse estimates, some of the other results in this area are established using compactness arguments and thus yield no bound. There are some examples of progress on quantitative bounds for property testing. See, for example, \cite{Alon,AlSh,CF1,CF2,FoLo}. However, it remains a major open problem if better bounds hold for the various property testing results.

In this paper, we address this problem for permutations. To properly understand our results, it is important to try to first determine what is a good notion (or notions) of distance between combinatorial objects. This is because we need to understand which metric we are using when we say that two objects are $\epsilon$-far from each other in this metric. For graphs, the edit distance, which is the fraction of pairs which one needs to add or delete edges from in order to turn one graph into the other, is quite natural, and it is not surprising that it is the most studied with regards to graph property testing. For permutations, there are now several important metrics that naturally arise in ranking problems in statistics. See, for example, the book by Diaconis \cite{D88}. This makes it less clear for which metrics permutation property testing should be done with respect to. 

An early paper of Cooper \cite{Cooper} develops an analogue of Szemer\'edi's regularity lemma for permutations\footnote{This regularity lemma was subsequently improved upon by Hoppen, Kohayakawa, and Sampaio \cite{HKS12} and with much better quantitative estimates by Fox, Lov\'asz, and Zhao \cite{FLZ}.} and deduces a permutation removal lemma. Typically, removal lemmas are equivalent to saying that certain properties of combinatorial objects are one-sided testable with respect to some metric. However, Cooper's permutation removal lemma does not give such a metric and so does not actually translate to a result in property testing. 

Since Cooper's work, there are now two different notions of distance which have been studied for permutation property testing, the rectangular distance (or cut distance), and Kendall's tau distance. The rectangular distance for permutations is an analogue of the Frieze--Kannan cut distance for graphs, which has played an important role in the development of the weak regularity lemma, graph limits, and approximation algorithms for graphs. See, for example, the book by Lov\'asz \cite{Lov} and the paper \cite{FLZ}. 

A permutation $\pi$ of length $n$ is a bijection from $[n]:=\{1,2,\ldots,n\}$ to itself. We can represent $\pi$ as $n$ points in the plane with the coordinates of the $i^{\textrm{th}}$ point being $(i, \pi(i))$. The \emph{rectangular (cut) distance} between two permutations $\pi_1,\pi_2$ of length $n$ is defined to be 
\[\pc(\pi_1, \pi_2) =\frac{1}{n} \max_{S, T} \left| \left|\pi_1(S) \cap T \right|-\left| \pi_2(S)\cap T \right| \right |,\]
where the maximum is over all subintervals $S,T$ of $[n]$. Thus, the rectangular distance is the normalized maximum discrepancy in rectangles between the number of points of the form $(i,\pi_1(i))$ and the number of points of the form $(i,\pi_2(i))$. While the rectangular distance is defined globally, through a counting lemma, it can be shown that two permutations have small rectangular distance if and only if they have roughly the same densities of all small subpermutations. This is an analogue of similar results for graphs; see \cite{Lov} and \cite{HKMRS} for details on these results for graphs and permutations, respectively. 

A copy of a permutation $\sigma$ of length $k$ in a permutation $\pi$ of length $n$ is a subsequence of $\pi$ that has the same order type as $\sigma$. That is, a copy of $\sigma$ in $\pi$ is a sequence of integers $1 \leq i_1 < i_2 <\ldots < i_k \leq n$ such that $\pi(i_j) < \pi(i_{\ell})$ if and only if $\sigma(j) < \sigma(\ell)$. If $\pi$ contains a copy of $\sigma$, then we say that $\sigma$ is a \emph{subpermutation} of $\pi$. If $\pi$ does not contain a copy of $\sigma$, then we say that $\pi$ avoids $\sigma$ or is $\sigma$-free. A permutation property is just a family of permutations. A permutation property $\mathcal{P}$ is {\it hereditary} if it is closed under subpermutations, that is, if every subpermutation of a permutation in $\mathcal{P}$ is also in $\mathcal{P}$. Hoppen, Kohayakawa, Moreira, and Sampaio \cite{HKMS} proved that every hereditary permutation property is one-sided testable with respect to the rectangular distance. Their proof uses a compactness argument and does not give any bound on the query complexity.  They also conjectured a stronger result that hereditary permutation properties are strongly testable, i.e., can be tested with respect to \emph{Kendall's tau distance}: for two permutations $\pi_1, \pi_2$ of length $n$, \[
\pkt(\pi_1, \pi_2) = \frac{1}{\binom{n}{2}} \left|  \{ (i,j) \text{ such that } \pi_1(i) < \pi_1(j), \pi_2(i) > \pi_2(j), i,j \in [n]    \}  \right|.\]
Alternatively, Kendall's tau distance between $\pi_1, \pi_2$ can also be defined as the minimum number of adjacent transpositions (i.e., swapping the $y$-coordinates of the points $(i, \pi(i))$ and $(i+1, \pi(i+1))$) required to turn $\pi_1$ into $\pi_2$, and normalized by dividing by $\binom{n}{2}$. 
This conjecture is stronger because the rectangular distance is small if Kendall's tau distance is small, but the converse is not true. For example, for two random permutations of length $n$ almost surely have rectangular distance $o(1)$, but Kendall's tau distance $\Omega(1)$. The nice conjecture of Hoppen et al.~was verified by Klimo\v sov\'a and Kr\'al' \cite{KK}. However, even for the property of being $\sigma$-free for some fixed permutation $\sigma$, the bound on the query complexity is enormous, of Ackermann-type in $1/\epsilon$, and hence not primitive recursive.\footnote{In the conference version of \cite{KK}, it is incorrectly stated that the proof gives a double exponential bound for testing $\sigma$-freeness.} In another work \cite{FW2}, we prove that there is a polynomial in $1/\epsilon$ bound for one-sided testing $\sigma$-freeness, where the exponent depends on $\sigma$. The result generalizes to show that hereditary properties are one-sided testable with respect to Kendall's tau distance, and for typical properties, it gives a polynomial bound.

Another important permutation metric is Spearman's footrule distance. 
For two permutation $\pi_1, \pi_2: [n] \to [n]$, their \emph{Spearman's footrule distance} is 
\[ \D(\pi_1,\pi_2) = \frac{1}{\binom{n}{2}}\sum_{i=1}^n |\pi_1(i) - \pi_2(i)|.\]
 A fundamental result of Diaconis and Graham \cite{DG} states that
\[\pkt(\pi_1,\pi_2 ) \leq \D(\pi_1,\pi_2) \leq 2\pkt(\pi_1,\pi_2).\] 
Thus Kendall's tau distance and Spearman's footrule distance are within a factor of two, and so testing with respect to Kendall's tau distance is essentially equivalent to testing with respect to Spearman's footrule distance. 

Maybe surprisingly, for testing with respect to the rectangular distance, we prove that there is a universal (not depending on the property), polynomial in $1/\epsilon$ query complexity bound for two-sided testing of hereditary properties of sufficiently large permutations. One drawback of the definition of the rectangular distance is that it is global, whereas with Kendall's tau distance, we see that we can make sequential local moves in order to get from one permutation to the other. We study a new distance for permutations, whose general form is called \emph{earth mover's distance} in statistics. It turns out to be quite natural and defined based on local moves, yet we prove it is small if and only if the rectangular distance is small. 

\begin{dfn}[Planar tau distance]
We say that $\sigma$ is obtained from $\pi$ by a \emph{planar simple transposition}, if there is an integer $1 \leq i < |\pi|-1$ such that $\sigma$ is the same as $\pi$ except $\sigma(i) = \pi(i+1), \sigma(i+1) = \pi(i)$, or $\sigma^{-1}(i) = \pi^{-1}(i+1), \sigma^{-1}(i+1) = \pi^{-1}(i)$. The \emph{planar tau distance} $ \ppt(\pi_1, \pi_2)$ between two permutations $\pi_1, \pi_2$ of length $n$ is defined as $\frac{1}{{n \choose 2}}$ times the minimum number of planar simple transpositions needed to transform $\pi_1$ into $\pi_2$.
\end{dfn} 
The $\frac{1}{{n \choose 2}}$ factor in the definition is the proper normalization in order to guarantee that this distance is always at most one. We can also define the planar tau distance in terms of Kendall's tau distance, since the planar tau distance allows for adjacent transpositions in both the horizontal and vertical directions. Thus 
\[ \ppt(\pi_1, \pi_2) = \min_{\text{bijection}~\theta:[n]\to[n]}  \pkt(\text{id}, \theta) + \pkt(\pi_1,\pi_2 \circ \theta).\]

It is also useful and interesting to define a planar analogue of Spearman's footrule distance.
\begin{dfn}[Earth mover's distance (for permutations)]
The \emph{earth mover's distance} between two permutations $\pi_1, \pi_2$ of length $n$ is defined as 
\[ \pn(\pi_1, \pi_2) =\frac{1}{\binom{n}{2}} \min_{\text{bijection}~\theta:[n] \to [n]} \left(\sum_{i=1}^n|i-\theta(i)| + |\pi_1(i) - \pi_2(\theta(i))|\right),\]
 where the minimum is over all bijections $\theta: [n] \to [n]$.
\end{dfn}
 This is the sum of $L_1$ distances between a point in $\pi_1$ and the point in $\pi_2$ that it maps to under $\theta$.
We can treat $\theta$ as a permutation. 
Thus the earth mover's distance is equivalent to 
\[ \pn(\pi_1, \pi_2) = \min_{\text{bijection}~\theta:[n]\to[n]} \D(\text{id}, \theta) + \D(\pi_1,\pi_2 \circ \theta) .
\]
Taking $\theta$ to be the identity permutation $\text{id}$, we obtain $\ppt(\pi_1, \pi_2) \leq \pkt(\pi_1, \pi_2)$ and $\pn(\pi_1, \pi_2) \leq D(\pi_1, \pi_2)$. Thus, the planar metrics are at most their classical analogues. 

The earth mover's distance is a special case of more general metrics that have been extensively studied before in other contexts. It is called the earth mover's distance or the Monge-Kantorovich norm in computer science and was first introduced by Monge  \cite{Monge} in 1781 as a central concept in transportation. It is a natural way of measuring the similarity between two digital images  (see, e.g., \cite{RTS}). In the case of permutations, the digital image has a single one in each row and column. In analysis, it is known as the Wasserstein metric. It is also a special case of the minimum weighted matching problem (see, e.g., \cite{V}). 

By the definitions of the planar tau distance through Kendall's tau distance and the earth mover's distance through 
Spearman's footrule distance, and by the Diaconis-Graham inequality, we therefore get the following planar analogue of the Diaconis-Graham inequality, which was pointed out by Diaconis \cite{Diaconis}.
\begin{cor}\label{planarfactortwo}
\[\ppt(\pi_1,\pi_2) \leq \pn(\pi_1, \pi_2)  \leq 2\ppt(\pi_1, \pi_2). \]
\end{cor}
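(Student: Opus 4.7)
The plan is to exploit the two equivalent formulations already derived in the excerpt,
\[ \ppt(\pi_1,\pi_2) = \min_{\theta} \bigl(\pkt(\text{id},\theta) + \pkt(\pi_1,\pi_2\circ\theta)\bigr), \qquad \pn(\pi_1,\pi_2) = \min_{\theta} \bigl(\D(\text{id},\theta) + \D(\pi_1,\pi_2\circ\theta)\bigr), \]
where both minima range over bijections $\theta:[n]\to[n]$. Since the two distances are optimized over the \emph{same} family of auxiliary bijections, the Diaconis--Graham inequality $\pkt \leq \D \leq 2\pkt$ applied term-by-term yields both bounds immediately.

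For the lower bound $\ppt(\pi_1,\pi_2) \leq \pn(\pi_1,\pi_2)$, I would let $\theta^*$ be a bijection attaining the minimum in the definition of $\pn(\pi_1,\pi_2)$. By the left-hand side of the Diaconis--Graham inequality applied to the pairs $(\text{id},\theta^*)$ and $(\pi_1,\pi_2\circ\theta^*)$,
\[ \pn(\pi_1,\pi_2) = \D(\text{id},\theta^*) + \D(\pi_1,\pi_2\circ\theta^*) \geq \pkt(\text{id},\theta^*) + \pkt(\pi_1,\pi_2\circ\theta^*) \geq \ppt(\pi_1,\pi_2), \]
where the last step uses that $\theta^*$ is a particular feasible choice for the minimum defining $\ppt$.

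For the upper bound $\pn(\pi_1,\pi_2) \leq 2\ppt(\pi_1,\pi_2)$, I would dually take $\theta^{**}$ to achieve the minimum in the definition of $\ppt(\pi_1,\pi_2)$ and apply the right-hand side of Diaconis--Graham to the same two pairs, giving
\[ 2\ppt(\pi_1,\pi_2) = 2\pkt(\text{id},\theta^{**}) + 2\pkt(\pi_1,\pi_2\circ\theta^{**}) \geq \D(\text{id},\theta^{**}) + \D(\pi_1,\pi_2\circ\theta^{**}) \geq \pn(\pi_1,\pi_2). \]

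There is no real obstacle here: the corollary is essentially a transportation-style bookkeeping statement. The only thing to be mindful of is that the normalizations on $\ppt$, $\pn$, $\pkt$, and $\D$ all use the common factor $\binom{n}{2}^{-1}$, so the Diaconis--Graham inequality transfers from its per-pair form to the normalized minima without adjustment. The key conceptual point to flag in the write-up is simply that the same $\theta$ appears on both sides of the equivalent definitions, so one may choose an optimal $\theta$ for one metric and use it as a feasible point for the other.
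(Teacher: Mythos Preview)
Your proposal is correct and is precisely the argument the paper has in mind: the text immediately preceding the corollary records the two identities $\ppt(\pi_1,\pi_2)=\min_\theta\bigl(\pkt(\mathrm{id},\theta)+\pkt(\pi_1,\pi_2\circ\theta)\bigr)$ and $\pn(\pi_1,\pi_2)=\min_\theta\bigl(\D(\mathrm{id},\theta)+\D(\pi_1,\pi_2\circ\theta)\bigr)$ and then states that the corollary follows from the Diaconis--Graham inequality, exactly as you spell out. Your observation about the common normalization $\binom{n}{2}^{-1}$ is the only bookkeeping point, and it is handled correctly.
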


The following result shows that the planar tau distance is small if and only if the rectangular distance is small. Together with the previous result, it shows that testing with respect to any of these metrics is the same up to a quadratic change in the testing parameter $\epsilon$. 
\begin{thm}\label{thmquadraticmetric}
For any two permutations $\pi_1, \pi_2$ of length $n$, we have 
\[  \frac{1}{8} \pc(\pi_1, \pi_2)^2 \leq \pn(\pi_1, \pi_2)  \leq 48 \pc(\pi_1, \pi_2)^{1/2}.  \]
\end{thm}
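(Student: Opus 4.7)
I will prove the two inequalities separately, in both cases working with the Wasserstein-1 distance $W_1 = \binom{n}{2}\pn$ between the $n$-point measures $\mu_j = \sum_i \delta_{(i,\pi_j(i))}$ on $[n]^2$ under the $L^1$ metric, and writing $\nu = \mu_1 - \mu_2$.

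For the lower bound $\frac{1}{8}\pc^2\leq \pn$, I will apply Kantorovich--Rubinstein duality, which gives $W_1 \geq \sum_{(x,y)} f(x,y)\,\nu(x,y)$ for every $1$-Lipschitz test function $f$. Fix a rectangle $R=[s_1,s_2]\times[t_1,t_2]$ with $\nu(R)=\pc\,n$ (flipping the sign of $f$ handles the opposite case). Take $f(x,y)=\min(k, d_{L^1}((x,y),R^c))$ for a parameter $k$ to be optimized; this function is supported on $R$, takes values in $[0,k]$, and is $1$-Lipschitz. By a layer-cake decomposition,
\[ \sum_{(x,y)} f(x,y)\,\nu(x,y) = \sum_{h=1}^k \nu(R_h), \]
where $R_h = [s_1+h-1,\,s_2-h+1]\times[t_1+h-1,\,t_2-h+1]$ is the sub-rectangle at depth $h$. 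The key estimate is on $\nu(R_h)$: the frame $R\setminus R_h$ is the disjoint union of four strips, each a rectangle consisting of $h-1$ consecutive columns or $h-1$ consecutive rows. Since every column and every row contains at most one point of $\pi_1$ and one of $\pi_2$, each strip has $|\nu|\leq h-1$, so $\nu(R_h)\geq\pc n - 4(h-1)$. Summing and choosing $k$ near $\pc n/4$ yields $\sum f\,\nu \geq (\pc n)^2/8$, and dividing by $\binom{n}{2}\leq n^2/2$ gives $\pn\geq \pc^2/8$.

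For the upper bound $\pn\leq 48\pc^{1/2}$, I will construct an explicit transport plan by dyadic multi-scale refinement. Assume for simplicity $n=2^K$ (the general case uses a near-dyadic partition with the same estimate up to constants). For $k=0,\ldots,K$, partition $[n]^2$ into axis-aligned dyadic blocks of side $2^k$, and set $D_k=\sum_Q |\mu_1(Q)-\mu_2(Q)|$. Build the plan from the coarsest scale (a single block on which $\mu_1$ and $\mu_2$ already agree) downwards: at each transition from level $k$ to level $k-1$, redistribute $\mu_1$'s mass within each level-$k$ block among its four level-$(k-1)$ children so as to match $\mu_2$'s child-level counts. The cost of this step is at most $2^{k+1}$ (the $L^1$ diameter of a level-$k$ block) times the mass moved, which aggregates to $D_{k-1}/2$ over all level-$k$ blocks, so $W_1(\mu_1,\mu_2)\leq 2\sum_{j=0}^{K-1} 2^j D_j$. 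Two bounds are available: $D_j\leq 2n$ trivially, and $D_j\leq(n/2^j)^2\cdot\pc\,n$ by applying the cut bound to each individual block. Splitting the sum at the crossover $2^j\approx n\sqrt{\pc}$ and summing each of the two geometric series yields $W_1 = O(n^2\sqrt{\pc})$, and a careful accounting of constants gives $\pn\leq 48\sqrt{\pc}$ after dividing by $\binom{n}{2}$.

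The main obstacle is the tight strip estimate in the lower bound: the cruder bound $|\nu(\text{strip})|\leq \pc n$ would only yield a cubic inequality of the form $\pc^3 \leq C\pn$, and it is essential to use that a strip of only $h-1$ columns or rows contains at most $h-1$ points of each $\pi_j$ (giving $|\nu|\leq h-1$, not $\pc n$) to obtain the quadratic relation. For the upper bound, the dyadic transport construction is standard; the main effort is bookkeeping so that the two geometric series, which naturally balance at $2^j\approx n\sqrt{\pc}$, sum to a value comfortably within the stated constant $48$.
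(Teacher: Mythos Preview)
Your proof is correct and the underlying ideas match the paper's. For the lower bound, the paper argues directly on the primal side: it expands the extremal rectangle outward by $\epsilon/8$ and uses that the resulting margin, a union of four thin strips, contains at most $\epsilon n/2$ points of $\pi_2$, so at least $\epsilon n/2$ points of $\pi_1$ must travel distance at least $\epsilon/8$ under any matching. You recast this same strip estimate through Kantorovich--Rubinstein duality and a layer-cake test function, shrinking the rectangle inward instead of expanding it outward; the combinatorial content (at most one point of each permutation per row and per column) is identical, and your version is just the dual reformulation. For the upper bound, the paper does exactly your dyadic multi-scale matching, but fine-to-coarse and starting at the optimized scale $d \approx \sqrt{\pc}$; this is equivalent to your full-scale sum once you split at the crossover $2^j \approx n\sqrt{\pc}$.

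One caution on your upper-bound construction: as written, the coarse-to-fine redistribution does not literally move mass $D_{k-1}/2$ at each step, because after the first redistribution the intermediate measure is no longer $\mu_1$, and its level-$(k-1)$ discrepancies against $\mu_2$ need not equal those of $\mu_1$. The bound $W_1 \le 2\sum_j 2^j D_j$ is nonetheless correct, but it is cleanly obtained by the fine-to-coarse greedy matching (pair each point with a partner in the smallest common dyadic block), which is what the paper does; you may want to switch the direction of your construction, or else invoke duality here as well.
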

Thus the new planar metrics share many of the advantages of both the rectangular distance and Kendall's tau distance. This result and other results relating permutation metrics are proved in the full version of this paper. 

\begin{dfn}[Blow-up of a permutation]\label{def:blowup}
A permutation $\alpha'$ is a \emph{blow-up} of another permutation $\alpha$ if and only if we can find positive integers $ 1 = k_1< k_2<  \dots < k_{|\alpha|+1}=|\alpha'|+1$ with the following property. If $i_1, i_2 \leq |\alpha'|$ satisfy that $k_{j_1} \leq i_1 < k_{j_1+1}$ and $k_{j_2} \leq i_2 < k_{j_2+1}$ with $j_1 \neq j_2$, then $\alpha'(i_1) > \alpha'(i_2)$ if and only if $\alpha(j_1) > \alpha(j_2)$.
\end{dfn}
Intuitively, it means that each point $(i, \alpha(i))$ blows up into a \emph{block} $\{ (i', \alpha'(i')), k_i \leq i' < k_{i+1}\}.$ The $i$-th block is of size $k_{i+1} - k_i$. Notice that we did not specify the permutation within each block. Figure \ref{fig:blowup} is an example of a blow-up $\alpha'$ of a permutation $\alpha$.  

\begin{figure}[H]
\centering
\caption{Blow-up of a permutation. \footnotesize{In this example, $\alpha'$ (the permutation in black) is a blow-up of $\alpha$ (the permutation in red), with the four blocks being four gray squares. The first, second, third, and fourth points in $\alpha$ blow-up into blocks of sizes three, four, two, and one respectively.}}
\includegraphics[scale=0.4]{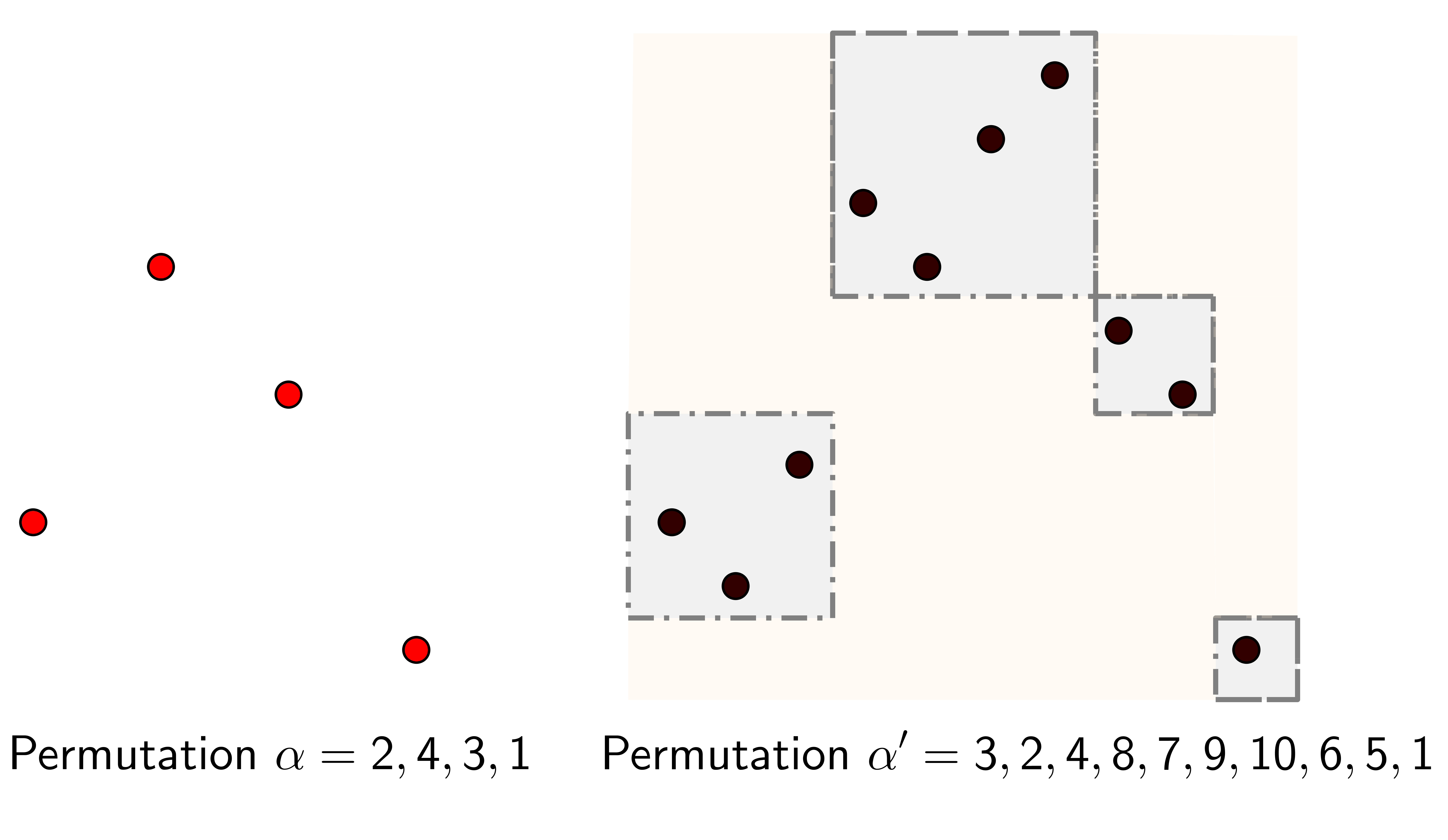}
\label{fig:blowup}
\end{figure}

A \emph{$k$-blow-up} of $\alpha$ is a blow-up of $\alpha$ where each block has size $k$. We next define some important parameters for the property $\PP$ related to blow-ups of permutations. Since we work with a single property $\PP$, we leave it out of the notation to make the notation simpler. 
\begin{dfn}[Blow-up parameter for $\PP$]\label{def:blowupp}
Given a permutation $\alpha$,  let $k^*(\alpha)$ be the minimum positive integer $k$ (if it exists) such that no $k$-blow-up of $\alpha$ is in $\PP$. If no such integer $k$ exists, i.e., for every $k$ there is a $k$-blow-up of $\alpha$ which is in $\PP$, then we define  $k^*(\alpha)=\infty$. 
Given a positive integer $T$, let $k^*(T)$ be the maximum of $k^*(\alpha)$ over all 
permutations $\alpha$ of length $T$ for which $k^*(\alpha)<\infty$. If no such $\alpha$ exists, i.e., $k^*(\alpha)=\infty$ for all permutations $\alpha$ of length $T$, then we define $k^*(T)=\infty$. 
\end{dfn}
Note that if $\PP$ is hereditary and has a forbidden subpermutation of length at most $T$, then it has a forbidden subpermutation $\alpha$ of length $T$, and $\alpha$ is a $1$-blow-up of itself and is not in $\PP$, which implies that $k^*(T)$ is finite in this case.  

We give a nearly linear bound for the query complexity for testing $\PP$ which depends on the smallest forbidden subpermutation for the property. We first need an important definition. A $0-1$ matrix $A$ {\it contains} another $0-1$ matrix $B$ if there is a submatrix $A'$ of $A$ of the same size as $B$ such that for every one entry of $B$, the corresponding entry of $A'$ is a one. For a permutation $\sigma$, the extremal number $\textrm{ex}(n,\sigma)$ is the maximum number of one entries in a $n \times n$ matrix with entries $0$ or $1$ which does not contain the permutation matrix of $\sigma$. F\"uredi and Hajnal \cite{FH92} conjectured that for each permutation $\sigma$, the limit $c(\sigma):=\lim_{n \to \infty} \frac{\textrm{ex}(n,\sigma)}{n}$ exists. Klazar \cite{Kl00} proved that the F\"uredi-Hajnal conjecture implies the well-known Stanley-Wilf conjecture. A celebrated result of Marcus and Tardos \cite{MT} verifies the F\"uredi-Hajnal conjecture, and hence the Stanley-Wilf conjecture. It shows that $c(\sigma)=2^{O(|\sigma| \log |\sigma|)}$. The first author \cite{F} improved the bound to $c(\sigma)=2^{O(|\sigma|)}$, and showed that  $c(\sigma)=2^{|\sigma|^{\Omega(1)}}$ for almost all permutations $\sigma$ of a given order.  The constant $c(\sigma)$ is known as the {\it F\"uredi-Hajnal constant} of $\sigma$. The fact that $\textrm{ex}(n,\sigma)$ is superadditive in $n$ implies that $\textrm{ex}(n,\sigma) \leq c(\sigma)n$ for all $n$. 

\begin{thm}\label{maincutnearlylinear}
For each proper hereditary permutation property $\mathcal{P}$ and $\epsilon>0$, let $C=C(\mathcal{P})=1000c(\sigma)$, where $c(\sigma)$ is the F\"uredi-Hajnal constant of a smallest forbidden subpermutation $\sigma$ for $\mathcal{P}$. 
Let $M = M(\epsilon) =\frac{2000C}{\epsilon} \log^2(\epsilon/2)$ and $n_0=n_0(M, \epsilon) = 32M^5k^*(M)/\epsilon^3$. There is a two-sided tester for $\mathcal{P}$ with respect to the planar tau distance of query complexity $M$ for permutations of size at least $n_0$. 
\end{thm}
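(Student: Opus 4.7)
My plan is to use the canonical two-sided sampler: query $M$ positions of $\pi$ uniformly at random from $[n]$, compute the induced subpermutation $\tau$, and accept if and only if $\tau\in\PP$. Completeness is immediate from the hereditary assumption on $\PP$: any induced subpermutation of $\pi\in\PP$ lies in $\PP$, so $\tau\in\PP$ with probability $1$. The heart of the argument is soundness: if $\pi$ is $\epsilon$-far from $\PP$ in planar tau distance, I must show $\Pr[\tau\in\PP]<1/3$. I will argue by contrapositive, assuming $\Pr[\tau\in\PP]>1/3$ and exhibiting a sequence of fewer than $\epsilon\binom{n}{2}$ planar simple transpositions that transforms $\pi$ into a permutation in $\PP$.

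Conditioning on a typical sample $\tau\in\PP$ at positions $x_1<\cdots<x_M$, the lines $x=x_i$ and $y=\pi(x_i)$ partition $[n]\times[n]$ into an $M\times M$ grid of cells of typical side length $n/M$. The F\"uredi-Hajnal bound, applied at this coarse level with the smallest forbidden subpermutation $\sigma$, shows that the heavy cells (those carrying significantly more points of $\pi$ than the average $n/M^2$) form a $\sigma$-avoiding pattern, hence number at most $O(c(\sigma)M)$. I then sort each cell planar-monotonically along the orientation dictated by $\tau$: the light cells contribute only $O(1/M^2)$ to the normalized transposition count, while the $O(c(\sigma)M)$ heavy cells contribute $O(c(\sigma)/M)$. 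Choosing $M=\Theta(c(\sigma)\log^2(1/\epsilon)/\epsilon)$ makes a single round of this sparsification cost at most $O(\epsilon/\log^2(1/\epsilon))$, and iterating at successively finer scales (refining each heavy cell by an $M\times M$ sub-grid, $O(\log(1/\epsilon))$ rounds in total, with an additional $\log(1/\epsilon)$ factor per round to make Chernoff concentration on empirical cell densities hold with high probability) keeps the accumulated cost below $\epsilon$.

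The resulting permutation $\pi'$ is a hierarchical blow-up of $\tau\in\PP$ whose finest block sizes lie below $k^*(M)$; by the defining property of $k^*(M)$, every deepest-level block is realizable as a blow-up in $\PP$, and combining with $\tau\in\PP$ and the hereditary assumption yields $\pi'\in\PP$ overall. The hard part is executing the multi-scale sparsification: at each round one needs a quantitative F\"uredi-Hajnal step applied to the empirical density matrix at the current scale, together with a careful bookkeeping of transposition costs so that the $\log^2$ factor is tight. The threshold $n_0=\Theta(M^5 k^*(M)/\epsilon^3)$ is exactly what guarantees enough room at the bottom of the hierarchy --- blocks of size at least $k^*(M)$ survive the $O(\log(1/\epsilon))$ refinements by factor $M$, so the bottom level has room at least $k^*(M)$ --- forcing the final hierarchical blow-up to lie in $\PP$.
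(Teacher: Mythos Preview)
Your proposal has a genuine gap, and it stems from using the wrong tester. The paper's tester does \emph{not} accept when the sample $\pi'$ lies in $\PP$; it accepts when some $k^*(M)$-blow-up of $\pi'$ lies in $\PP$ (equivalently, when $\pi'$ admits arbitrarily large blow-ups in $\PP$). This distinction drives both halves of the analysis, and each half of your argument breaks once you replace the blow-up test by the membership test.

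On the soundness side, your contrapositive ends with ``$\pi'$ is a hierarchical blow-up of $\tau\in\PP$ whose finest block sizes lie below $k^*(M)$, hence $\pi'\in\PP$''. This inference is not valid: $k^*(M)$ does not say that small blow-ups of a length-$M$ permutation in $\PP$ stay in $\PP$. By definition $k^*(M)$ is the largest finite $k^*(\alpha)$ over length-$M$ permutations $\alpha$, where $k^*(\alpha)$ is the least $k$ with \emph{no} $k$-blow-up of $\alpha$ in $\PP$. Knowing only $\tau\in\PP$ gives you no control over whether any particular blow-up of $\tau$ lies in $\PP$. The paper's tester sidesteps this: if the tester accepts, then by construction $\pi'$ has a $k^*(M)$-blow-up in $\PP$, and since $k^*(\pi')>k^*(M)$ forces $k^*(\pi')=\infty$, there is a blow-up of $\pi'$ in $\PP$ with \emph{any} prescribed block sizes. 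That is exactly the $\tilde\alpha'\in\PP$ the paper exhibits close to $\pi$.

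On the completeness side, you say it is ``immediate from the hereditary assumption''. For your membership tester that is true, but for the paper's blow-up tester it is not: one must show that with high probability a random $M$-sample $\pi'$ has a $k^*(M)$-blow-up sitting inside $\pi$ (hence in $\PP$). This is Lemma~\ref{typicalsample}, and it is precisely where the lower bound $n\ge n_0=32M^5k^*(M)/\epsilon^3$ is used. Your reading of $n_0$ as ``room at the bottom of the hierarchy'' in the soundness direction is therefore misplaced. Finally, your iterative $M\times M$ refinement of heavy cells over $O(\log(1/\epsilon))$ rounds would require sample points in each sub-cell at every scale, far more than the $M$ queries you have; the paper instead performs a single dyadic partition of $[0,1]^2$ down to side length $\Theta(\epsilon)$, applies Marcus--Tardos once per level to bound the number of frozen squares, and uses the rich squares (each hit by the sample with high probability) to build $\tilde\alpha$ directly.
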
 
The tester works as follows. Let $M$ be as specified in Theorem \ref{maincutnearlylinear}. For a permutation $\pi$, we pick a subpermutation $\pi'$ of $\pi$ of size $M$ uniformly at random; call it an \emph{$M$-sample}. That is, we pick a subset $S \subset [n]$ of size $M$ uniformly at random, and $\pi'$ is the subpermutation of $\pi$ induced on $S$.  If for all integers $k$, there exists a $k$-blow-up of $\pi' \in \PP$, our algorithm outputs ``$\pi$ is in $\PP$''. If there exists an integer $k$ such that no $k$-blow-up of $\pi'$ is in $\PP$, our algorithm outputs ``$\pi$ is not in $\PP$''. We remark that the constant dependence can sometimes be improved by using extremal properties of the family of forbidden subpermutations rather than just the smallest forbidden subpermutation. 

The next theorem gives a universal quadratic bound (not depending on the property) on the query complexity for testing sufficiently large permutations. 

\begin{thm}\label{maincut}
For each hereditary permutation property $\mathcal{P}$ and $\epsilon>0$, let $M = M(\epsilon) =20000/\epsilon^2$ and $n_0 =n_0(M,\epsilon)= 32 M^5 k^*(M)/\epsilon^3$. There is a two-sided tester for $\mathcal{P}$ with respect to the planar tau distance of query complexity $M$ for permutations of size at least $n_0$. 
\end{thm}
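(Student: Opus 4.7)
The plan is to analyze the tester described after Theorem \ref{maincutnearlylinear}: sample a uniformly random $M$-subpermutation $\pi'$ of $\pi$ and accept if and only if $k^*(\pi')=\infty$. The task splits into completeness ($\pi\in\PP$ implies acceptance with probability $\geq 2/3$) and soundness ($\ppt(\pi,\PP)\geq\epsilon$ implies rejection with probability $\geq 2/3$). For completeness, hereditariness of $\PP$ forces $\pi'\in\PP$, so it suffices to exhibit a $k$-blow-up of $\pi'$ in $\PP$ for every positive integer $k$; by Definition \ref{def:blowupp} only $k\leq k^*(M)$ needs checking. With high probability a uniform $M$-sample is \emph{well-spread}, meaning all gaps between consecutive sampled $x$-coordinates and between consecutive sampled $y$-coordinates exceed $k^*(M)$. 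The choice $n\geq n_0=32M^5 k^*(M)/\epsilon^3$ is made so that this event fails with probability $o(1)$ by a union bound. When it holds, for each such $k$ I pick $k-1$ additional nearby points of $\pi$ around each sampled point in both coordinates so that the induced subpermutation of $\pi$ is precisely a $k$-blow-up of $\pi'$; since $\PP$ is hereditary this subpermutation lies in $\PP$.

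For soundness, suppose $\ppt(\pi,\PP)\geq\epsilon$ and for contradiction suppose $\Pr[k^*(\pi')=\infty]>1/3$. For each such $\pi'$ I construct $\tau\in\PP$ with $\pn(\pi,\tau)\leq\epsilon$. Using $k^*(\pi')=\infty$, pick any $k$ exceeding the largest block size induced by the sample; by assumption some $k$-blow-up of $\pi'$ lies in $\PP$, and passing to a subpermutation (by hereditariness) yields a blow-up $\tau$ of $\pi'$ with chosen widths $w_a$, where the $w_a$ are integer approximations to both the $x$-block widths $s_a$ and the $y$-block heights $t_{\pi'(a)}$ induced by the sample on $\pi$ (a blow-up forces $s^\tau_a = t^\tau_{\pi'(a)}$, so common widths must be used). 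I then bound $\pn(\pi,\tau)$ via an explicit bijection $\theta$ matching each point of $\pi$ to a point of $\tau$ inside the same two-dimensional block; within such a block the $L^1$ cost per point is at most $s_a+t_b$, and summing telescopically gives
\[
\pn(\pi,\tau)\;\leq\;\frac{\sum_a s_a^2+\sum_b t_b^2}{\binom{n}{2}} + o(1),
\]
where the $o(1)$ absorbs the block-size mismatch. A second-moment calculation for uniform sampling without replacement gives $E[\sum_a s_a^2]=E[\sum_b t_b^2]=O(n^2/M)$, so $E[\pn(\pi,\tau)]=O(1/M)$. With $M=20000/\epsilon^2$ this yields $E[\pn]\leq\epsilon/3$, and by Markov's inequality $\pn(\pi,\tau)\leq\epsilon$ with probability at least $2/3$. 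This event together with the acceptance event of probability $>1/3$ has positive intersection probability by inclusion–exclusion, so some sample $\pi'$ simultaneously gives $k^*(\pi')=\infty$ and a $\tau\in\PP$ with $\ppt(\pi,\tau)\leq\pn(\pi,\tau)\leq\epsilon$, contradicting farness.

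The main obstacle is the soundness construction, specifically the bridge from the equal-block-size condition $k^*(\pi')=\infty$ to a blow-up of $\pi'$ respecting the (unequal and generally incompatible) $x$- and $y$-block sizes dictated by the sample's position inside $\pi$. The resolution uses hereditariness to select a blow-up of $\pi'$ with common integer widths $w_a$ close to both $s_a$ and $t_{\pi'(a)}$, and then controls the mismatch through the $o(1)$ boundary term in the $\pn$-estimate, leaning on the concentration of the block-size statistics. This is also what dictates the shape of $n_0$ and necessitates the factor $k^*(M)$: both the well-spread event and the small-mismatch event must hold with high enough probability to run the two arguments in tandem.
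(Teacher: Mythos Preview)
Both halves of your argument have genuine gaps.

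\textbf{Completeness.} A well-spread sample does not yield a $k$-blow-up of $\pi'$ inside $\pi$. You need, for each sampled point $(j_t,\pi(j_t))$, a set of $k$ points of $\pi$ that are close to $j_t$ in the $x$-coordinate \emph{and} close to $\pi(j_t)$ in the $y$-coordinate simultaneously. But for a general permutation (take $\pi$ uniformly random), the points $(j_t\pm 1,\pi(j_t\pm 1)),(j_t\pm 2,\pi(j_t\pm 2)),\ldots$ have $y$-coordinates spread throughout $[n]$, so no nearby-in-$x$ choice will form a $y$-block, and the induced subpermutation is not a blow-up of $\pi'$. The paper's Lemma~\ref{typicalsample} instead partitions $[0,1]^2$ into a fine grid and shows that with high probability every sample point lies in a dense square and the samples occupy distinct grid rows and columns; any $k$ points of $\pi$ from each such dense square then serve as the blocks.

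\textbf{Soundness.} The bijection ``matching each point of $\pi$ to a point of $\tau$ inside the same two-dimensional block'' does not exist in general. The sample cuts $[n]^2$ into an $(M{+}1)\times(M{+}1)$ grid; points of $\pi$ are spread over essentially all cells, whereas every blow-up $\tau$ of $\pi'$ places all of its points in only the $M$ ``diagonal'' cells $(a,\pi'(a))$. For a generic $\pi$ only roughly a $1/M$ fraction of its points lie in diagonal cells, so almost all points must be matched across cells and the per-point cost is not $s_a+t_b$. Relatedly, $s_a$ and $t_{\pi'(a)}$ are essentially independent with fluctuations of order $n/M$, so there is no common $w_a$ close to both, and the mismatch you call $o(1)$ is actually $\Theta(1)$. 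Your gap-statistic bound $E[\sum_a s_a^2]=O(n^2/M)$ is correct, but it does not by itself control $\pn(\pi,\tau)$. The paper bypasses this by assigning each point of $\pi$ to the \emph{nearest} sample point in the plane (so block sizes are the sizes of two-dimensional Voronoi-type regions, not one-dimensional gaps), proving $\pn(\pi,\ta)<\epsilon/2$ via a dyadic partition with level-dependent density thresholds (Lemmas~\ref{moveshort2} and~\ref{condi}), and then separately bounding $\pn(\ta,\ta')<\epsilon/2$ for any other blow-up $\ta'$ with the same block sizes (Lemma~\ref{tata'2}).
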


Theorems \ref{maincutnearlylinear} and \ref{maincut} are both with respect to two-sided testing. For one-sided testing, we can still get reasonably good bounds, as stated in Theorem \ref{maincutnearlylinearoneside}, by showing that very likely a permutation has the property that it is close to a blow-up of a random subpermutation $\alpha$ and a somewhat larger random subpermutation $\pi'$ very likely contains a $k^*(|\alpha|)$-blow-up of $\alpha$. These bounds are polynomial in $1/\epsilon$ as long as the blow-up parameter $k^*(M)$ for $\mathcal{P}$  is bounded above by a polynomial in $M$. In particular, for almost all permutations $\sigma$ of length $s$, we get a universal bound for testing $\sigma$-freeness. This is because almost all permutations $\sigma$ of length $s$ have no subinterval of length three whose image is an interval of length three, and hence $k^*$ is at most three for the property of being $\sigma$-free.

\begin{thm}\label{maincutnearlylinearoneside}
Let $\mathcal{P}$ be a hereditary permutation property and $\epsilon>0$. Let $M_1(\epsilon), M_2(\epsilon)$ be the value $M$ in terms of $\epsilon$ in Theorems~\ref{maincutnearlylinear} and \ref{maincut}, respectively.  Let $n_0(M, \epsilon) = 32 M^5 k^*(M)/\epsilon^3$.
Let \[M' =  \min\left(n_0(M_1(\epsilon/2), \epsilon/2), n_0(M_2(\epsilon/2), \epsilon/2)\right).\] 
There is a one-sided tester for $\mathcal{P}$ with respect to the planar tau distance of query complexity $M'$ for permutations of size at least $M'$. 
\end{thm}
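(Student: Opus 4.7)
The plan is to use the following one-sided test: sample a random size-$M'$ subpermutation $\pi'$ of $\pi$, and output ``$\pi \in \mathcal{P}$'' if and only if $\pi' \in \mathcal{P}$. Since $\mathcal{P}$ is hereditary, every subpermutation of a permutation in $\mathcal{P}$ lies in $\mathcal{P}$, so whenever $\pi \in \mathcal{P}$ the tester outputs YES with probability one, which gives the one-sided guarantee immediately. The remaining task is to show that if $\pi$ is $\epsilon$-far from $\mathcal{P}$ in the planar tau distance, then $\pi' \notin \mathcal{P}$ with probability at least $2/3$.

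To analyze the far case, the plan is to view the sample $\pi'$ as a nested pair. First, consider a further random subpermutation $\alpha \subseteq \pi'$ of size $M := M_i(\epsilon/2)$, with $i \in \{1,2\}$ the index minimizing $n_0(M_i(\epsilon/2), \epsilon/2)$. Marginally, $\alpha$ is a uniformly random size-$M$ subpermutation of $\pi$. Because $\pi$ is $\epsilon$-far (hence $\epsilon/2$-far) from $\mathcal{P}$, the correctness analysis for Theorem~\ref{maincutnearlylinear} (when $i=1$) or Theorem~\ref{maincut} (when $i=2$) shows that, after boosting the $2/3$ to $5/6$ by enlarging the constants in $M$ by an absolute factor, with probability at least $5/6$ there is some $k \leq k^*(M)$ for which no $k$-blow-up of $\alpha$ lies in $\mathcal{P}$; equivalently, $k^*(\alpha)$ is finite and at most $k^*(M)$.

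Next, the plan is to show that with probability at least $5/6$ the full sample $\pi'$ contains a $k^*(\alpha)$-blow-up of $\alpha$. By definition of $k^*(\alpha)$ such a blow-up is not in $\mathcal{P}$, which forces $\pi' \notin \mathcal{P}$ and the tester outputs NO; a union bound over the two events then yields success probability at least $2/3$. This containment step uses the structural lemma underlying the two-sided testers: conditional on $\alpha$, the ambient permutation $\pi$ is close in planar tau distance to a large blow-up of $\alpha$, and from this closeness one locates an exact $k$-blow-up of $\alpha$ inside $\pi$ for any $k$ substantially smaller than $|\pi|/M$. Conditional on $\alpha \subseteq \pi'$, the remaining $M' - M$ points of $\pi'$ are chosen uniformly without replacement from $\pi \setminus \alpha$; a Chernoff-type concentration inequality then shows that each of the $M$ blocks of the blow-up in $\pi$ receives at least $k^*(\alpha) \leq k^*(M)$ of the sampled points with probability at least $5/6$, using that $M' = n_0(M,\epsilon/2) = 32 M^5 k^*(M)/(\epsilon/2)^3$ is polynomially larger than $M \cdot k^*(M)$.

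The main obstacle will be this last containment argument: converting the planar-tau proximity of $\pi$ to a blow-up of $\alpha$ into an honest $k^*(\alpha)$-blow-up sitting inside the comparatively small sample $\pi'$. The blocks of the blow-up may have very unequal sizes, and planar-tau perturbations allow local rearrangements that can disrupt the ordered block structure, so the concentration step must be combined carefully with the structural lemma to obtain a clean copy. The calibration $n_0(M,\epsilon) = 32 M^5 k^*(M)/\epsilon^3$ in the theorem is precisely what provides enough slack to absorb both sources of loss.
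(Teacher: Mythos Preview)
Your overall setup---the tester that samples $\pi'$ of size $M'$ and accepts iff $\pi'\in\mathcal{P}$, together with the nested subsample $\alpha\subset\pi'$ of size $M$---is exactly what the paper does. The paper's execution, however, is much shorter. It treats the two-sided tester as a black box and applies the Goldreich--Trevisan trick: the event ``Tester~1 accepts'' depends only on the inner sample $\pi''=\alpha$, which is marginally uniform both in $\pi$ and in $\pi'$. Using the \emph{far} guarantee on $\pi$ gives $\Pr(\text{accept})\le\epsilon/2$; using the \emph{in-$\mathcal{P}$} guarantee on $\pi'$ (valid because $|\pi'|=M'=n_0(M,\epsilon/2)$) gives $\Pr(\text{accept}\mid \pi'\in\mathcal{P})\ge 1-\epsilon/2$. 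One line of conditioning then yields $\Pr(\pi'\in\mathcal{P})\le(\epsilon/2)/(1-\epsilon/2)<\epsilon$. No structural analysis of $\pi$ is needed at all.

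Your containment step is where the proposal goes astray. You attempt to show that $\pi'$ contains a $k^*(\alpha)$-blow-up of $\alpha$ by first arguing that $\pi$ is close in planar tau distance to a blow-up of $\alpha$, then extracting an exact $k$-blow-up of $\alpha$ inside $\pi$, and finally hitting each block with the remaining sample. This conflates the two halves of the two-sided analysis: the closeness-to-blow-up statement comes from the \emph{far} side (Lemmas~\ref{pita}--\ref{closeto}) and only holds under side conditions (few frozen squares per level); it does not by itself give an exact blow-up of $\alpha$ sitting inside $\pi$, and you give no mechanism for producing one. The correct and immediate route is to invoke Lemma~\ref{typicalsample} with $\pi'$ as the ambient permutation: since $|\pi'|=n_0(M,\epsilon/2)=32M^5k^*(M)/(\epsilon/2)^3$ and $\alpha$ is a uniform $M$-sample of $\pi'$, with probability at least $1-\epsilon/2$ some $k^*(M)$-blow-up of $\alpha$ lies in $\pi'$. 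Combined with $\Pr(k^*(\alpha)\le k^*(M))\ge 1-\epsilon/2$ from the far side, a union bound gives $\Pr(\pi'\notin\mathcal{P})\ge 1-\epsilon$. This is precisely the ``yes'' side of the two-sided tester applied to $\pi'$, which is what the paper's black-box argument is implicitly using. (Incidentally, no boosting of $2/3$ to $5/6$ is needed: Theorems~\ref{maincutnearlylinear} and~\ref{maincut} already give probability $1-\epsilon$.)
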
 



Finally, we show that several different permutation metrics of interest are closely related to the cut metric, yielding similar results for testing with respect to these metrics.

We often consider the input permutation $\pi$ of length $n$ as a collection of points $(\frac{i}{n},\frac{\pi(i)}{n})$ in the unit square $[0,1]^2:=[0,1]\times [0,1]$. See Figure \ref{fig:p_representation} for an example. 

\begin{figure}[H]

\centering

\caption{Permutation Representation in $[0,1]^2$. \footnotesize{This is an example of the permutation $8,2,7,6,4,5,3,1,9,10$.}}

\includegraphics[scale=0.3]{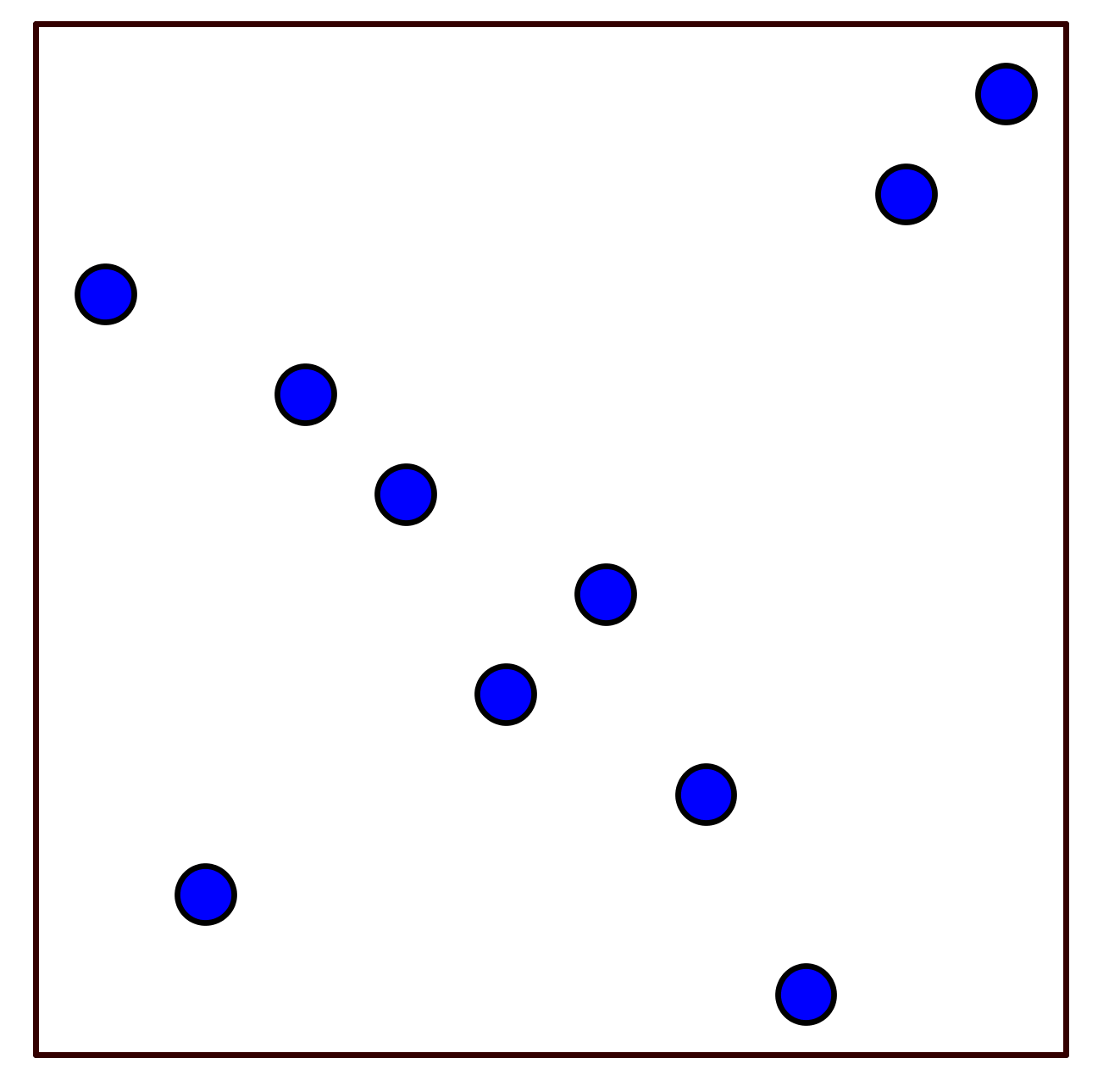}
\label{fig:p_representation}
\end{figure}

If we are testing for a hereditary permutation property $\mathcal{P}$, and we know that the following hold: 

\begin{itemize} 
\item there is a permutation $\sigma \not \in \mathcal{P}$ (or even if large blow-ups of $\sigma$ are not in $\mathcal{P}$), 
\item there are $|\sigma|$ rectangles whose horizontal intervals are disjoint and whose vertical intervals are disjoint, 
\item each of the rectangles contains a significant fraction of the points corresponding to $\pi$, and
\item if we pick one point from each of the rectangles, then we form a copy of $\sigma$, 
\end{itemize} 
then a large sample of points from $\pi$ will likely have many points in each rectangle and thus contain a large blow-up of $\sigma$ and certify that $\pi \not \in \mathcal{P}$. This simple idea is very important for our various property testing algorithms, and is demonstrated in Figure \ref{fig:largepermblowup}.
\begin{figure}[H]

\centering

\caption{Illustration of a permutation containing a large blow-up. \footnotesize{The black dots denote the points in $\pi$; while the red crosses are the points being picked by the $M$-sample.}}

\includegraphics[scale=0.9]{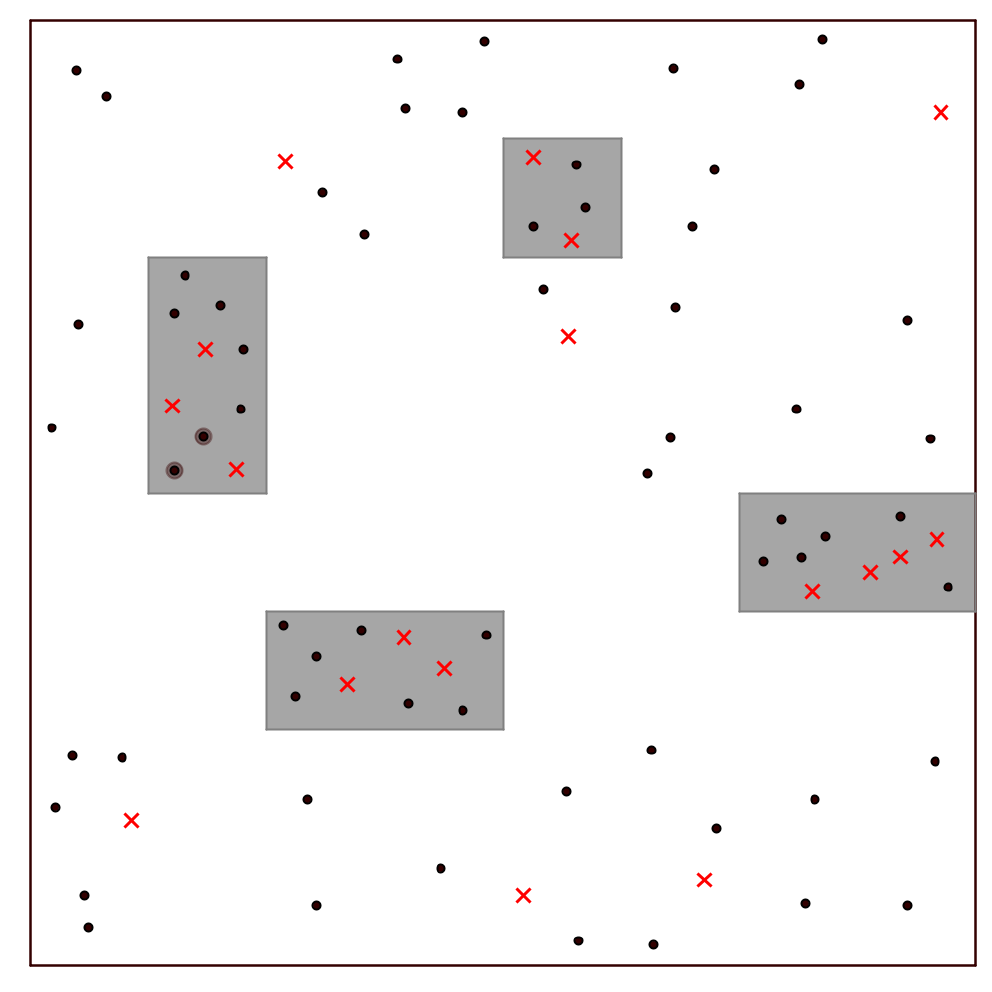}
\label{fig:largepermblowup} 
\end{figure}

\section{Equivalence between different metrics on permutations}
We next define several different metrics between permutations of the same length. Each of the metrics we use here is normalized such that the maximum distance between two permutations cannot exceed 1. We then study properties of these metrics and the relationships between them. Each of these metrics has the property that if two permutations have small distance in one metric, they also have small distance in the other metrics. Furthermore, two permutations have small distance in any of these metrics is equivalent to having, for each small permutation $\mu$, roughly the same density of $\mu$ as a subpermutation. 

Recall that we sometimes view the permutation $\pi$ of length $n$ as the collection of points $(i,\pi(i))$ with $i \in [n]$ or,  normalized, as the collection of points $(\frac{i}{n},\frac{\pi(i)}{n}) \in [0,1]^2$. It should be clear from the context which is used. Viewing a permutation as a collection of points in $[0,1]^2$, given any rectangle $S \subset [0,1]^2$, let $S(\pi)$ be the number of points of $\pi$ inside $S$ (including the boundary).
 
\begin{dfn}
Let $\pi_1, \pi_2$ be permutations of length $n$. 
\begin{enumerate}
\item \emph{Rectangular Distance}, or \emph{Cut Distance}. The rectangular (cut) distance between $\pi_1, \pi_2$ is defined as 
\[  \pc(\pi_1, \pi_2) = \frac{1}{n}\, \, \max_{I, J \text{ intervals } \subset [0,1]} | (I \times J)(\pi_1) -  (I \times J)(\pi_2)|\]
 where the maximum is over all closed intervals $I \subset [0,1]$ and $J \subset [0,1]$. 
\item \emph{Dyadic Distance}. 
A closed interval $I \subset [0,1]$ is called a \emph{dyadic interval} if there exist positive integers $i, k$ such that $I = [\frac{i}{2^k}, \frac{i+1}{2^k}]$.  
The dyadic distance between $\pi_1, \pi_2$ is defined as 
\[ \pd(\pi_1, \pi_2) =  \frac{1}{n} \, \, \max_{I, J \text{ dyadic intervals}} | (I \times J)(\pi_1) -  (I \times J)(\pi_2)|\] where the maximum is over all intervals $I \subset [0,1]$ and $J \subset [0,1]$ and $I, J$ are dyadic intervals. 
\item \emph{Square Distance.} The square distance between $\pi_1, \pi_2$ is defined as 
\[ \ps(\pi_1, \pi_2) = \frac{1}{n} \, \max_{I, J} | (I \times J)(\pi_1) -  (I \times J)(\pi_2)|\]
 where the maximum is over all intervals $I \subset [0,1]$ and $J \subset [0,1]$ and $|I| = |J|$. Thus $I \times J$ is a square in $[0,1]^2$. 
\item \emph{Dyadic Square Distance.} The dyadic square distance between $\pi_1, \pi_2$ is defined as 
\[\pds(\pi_1, \pi_2) = \frac{1}{n} \, \, \max_{I, J \text{ dyadic intervals}, |I| = |J|} | (I \times J)(\pi_1) -  (I \times J)(\pi_2)|\] where the maximum is over all \emph{dyadic squares} $I \times J$, which means $I, J$ are dyadic intervals and $|I| = |J|$. 
\item \emph{Earth Mover's Distance}. The earth mover's distance between $\pi_1, \pi_2$ is defined as 
\[ \pn(\pi_1, \pi_2) =\frac{1}{{n \choose 2}} \, \, \min_{n-\textrm{permutation}~\theta} \left(\sum_{i=1}^n|i-\theta(i)| + |\pi_1(i) - \pi_2(\theta(i))|\right),\]
 where the minimum is over all permutations $\theta: [n] \to [n]$ of length $n$. This is the sum of $L_1$ distances between a point in $\pi_1$ and the point in $\pi_2$ that it maps to under $\theta$. 
\item \emph{Planar tau Distance}. A permutation $\sigma$ can be obtained from a permutation $\pi$ of the same length by a \emph{planar simple transposition} if there exists an integer $1 \leq i < |\pi|-1$ such that $\sigma$ is the same as $\pi$ except either $\sigma(i) = \pi(i+1)$, $\sigma(i+1) = \pi(i)$, or $\sigma^{-1}(i) = \pi^{-1}(i+1)$, $\sigma^{-1}(i+1) = \pi^{-1}(i)$. 
The planar tau distance between $\pi_1, \pi_2$, denoted as $\ppt(\pi_1, \pi_2)$, is the minimum number of planar simple transpositions required to transform $\pi_1$ into $\pi_2$, and then normalized by $1/{n \choose 2}$. Restated, the planar tau distance between $\pi_1$ and $\pi_2$ is the normalized (so divided by $\frac{1}{{n \choose 2}}$)  minimum number of consecutive row or column swaps needed to obtain the permutation matrix of $\pi_2$ from the permutation matrix of $\pi_1$. Recall that the permutation matrix of a permutation $\pi$ of length $n$ is a $n \times n$ matrix $M_{\pi}$ with $M_{i\pi(i)}=1$ for each $i \in [n]$, and the remaining entries are $0$. 
\end{enumerate}
\end{dfn}

We first make several remarks about these metrics. For two permutations of length $n$, their planar tau distance is at most their Kendall's tau distance as Kendall's tau distance is defined in the same way but is more restrictive on the allowed moves (only allowing consecutive column swaps). Similarly, their earth mover's distance is at most their Spearman's tau distance as taking $\theta$ to be the identity permutation of length $n$, we obtain Spearman's footrule distance. Summarizing, the planar distances are at most their classical variants. We also recall Corollary \ref{planarfactortwo}, which shows that $\ppt(\pi_1, \pi_2) \leq \pn(\pi_1, \pi_2) \leq 2\ppt(\pi_1, \pi_2)$. Thus the planar tau distance and earth mover's distance are within a factor two of each other. 

Also, it is worth discussing the complexity of computing these metrics for two permutations of length $n$. The rectangular distance is defined as the minimum over $O(n^4)$ choices of pairs of intervals, while the Square distance is only over $O(n^3)$ choices of pairs of intervals, and the Dyadic distance and the dyadic square distance is defined only over $O(n^2)$ choices of pairs of intervals. Hence, the dyadic distances appear to be considerably faster to determine exactly. 

If we only want to approximate these distances, we can do a much faster computation.  Two squares whose horizontal and vertical intervals differ in endpoints by at most $\epsilon/8$ in each coordinate differ by at most $\epsilon/2$ in the fraction of points of the permutation in the rectangle for a given permutation. Hence, by considering only multiples of $\epsilon/8$ as possible endpoints, we can approximate the rectangular distance within $\epsilon$ using only $O(\epsilon^{-4})$ rectangles. Similarly, we can approximate within $\epsilon$ the square distance by using at most $O(\epsilon^{-3})$ squares, and the dyadic distance or the dyadic square distance within $\epsilon$ using only $O(\epsilon^{-2})$ dyadic rectangles. Thus, these distances can be determined or approximated rather quickly, with the dyadic distances being the fastest to approximate. 

On the other hand, while very natural, the earth mover's distance is defined as the minimum over $n!$ permutations, which requires a huge computation. Similarly, it is unclear if there is an efficient algorithm for computing the planar tau distance efficiently. 
However, it is possible to efficiently approximate these planar distances. By partitioning into boxes of side length about $\epsilon/4$,  just using the information about the fraction of points in each box, by considering roughly the fraction of points in each box that match up to points in other boxes, it is possible to show that one can compute the planar tau distance and the earth mover's distance each within $\epsilon$ in time which is a function only of $\epsilon$.

We next prove Theorem \ref{thmquadraticmetric}, which states that for any two permutations $\pi_1, \pi_2$ of length $n$, we have 
\[  \frac{1}{8} \pc(\pi_1, \pi_2)^2 \leq \pn(\pi_1, \pi_2)  \leq 48 \pc(\pi_1, \pi_2)^{1/2}.  \]

\begin{proof}[Proof of Theorem \ref{thmquadraticmetric}]
The earth mover's distance between two permutations $\pi_1, \pi_2$ is the normalized minimum, over all permutations $\theta: [n] \to [n]$, which is the same as the normalized minimum over all matchings between points in $\pi_1$ and $\pi_2$, of the sum over all matched pairs of the taxicab ($L_1$) distance between the two points that are matched. 

We first show that if $\pc(\pi_1, \pi_2) \geq \epsilon$, then $\pn(\pi_1, \pi_2) \geq \epsilon^2 / 8$. 
Since $\pc(\pi_1, \pi_2) \geq \epsilon$, we can find a rectangle $I \times J \subset [0,1]^2$ such that there are at least $\epsilon n$ more points  in this rectangle in $\pi_1$ than $\pi_2$, or vice versa. Without loss of generality, we assume there are $\epsilon n$ more points in $\pi_1$ than in $\pi_2$. Therefore for any bijection $\theta$ that maps points in $\pi_1$ to the ones in $\pi_2$, at least $\epsilon n$ points in $\pi_1$ have to map to the points of $\pi_2$ that are outside the rectangle $I \times J$.

Assume $I = [x_1, x_2], J  = [y_1, y_2]$. Let $I' = [\max(x_1 - \epsilon/8,0), \min(x_2+ \epsilon/8,1)]$, and $J' =  [\max(y_1 - \epsilon/8,0), \min(y_2+ \epsilon/8,1)]$. Thus $I \times J \subset I' \times J'$, and the difference between the two rectangles have margin at most $\epsilon/8$. 
Figure \ref{fig:cutnew1} illustrates these two rectangles. 
Since $\pi_2$ is a permutation, there are at most $4 \cdot \epsilon n/8  = \epsilon n/2$ points of $\pi_2$ that are inside the region $(I'\times J') \setminus (I \times J)$. Therefore there are at least $\epsilon n - \epsilon n/2 = \epsilon n/2$ points of $\pi_1$ inside $I \times J$ that have to map to points of $\pi_2$ outside $I' \times J'$. However for these points of $\pi_1$, the distance between it and the point in $\pi_2$ it maps to is at least $\epsilon/8$. Therefore $\pn(\pi_1, \pi_2)  \geq  \epsilon/8 \cdot \epsilon n/2 \cdot \frac{n}{{n \choose 2}} \geq \epsilon^2 / 8$. Thus we proved $\frac{1}{8} \pc(\pi_1, \pi_2)^2 \leq \pn(\pi_1, \pi_2)$.

\begin{figure}[h]
\centering
\caption{Example of the matching process. \footnotesize{The inner rectangle is $I \times J$ while the outer rectangle is $I' \times J'$. The round dots are the points of $\pi_1$ and the crosses are the points of $\pi_2$. If a point in $\pi_1$ is connected to a point of $\pi_2$ by a dashed line, it means they are matched. Since there are many more points of $\pi_1$ than of $\pi_2$ in $I \times J$ and not many points of $\pi_2$ in $(I' \times J') \setminus (I \times J)$, many left-over points of $\pi_1$ in $I \times J$ have to map to points of $\pi_2$ outside $I' \times J'$; each such match has a distance at least $\epsilon/8$ between its points.  }}
\includegraphics[scale=0.7, trim={2cm 2.7cm 7.4cm 3.8cm},clip]{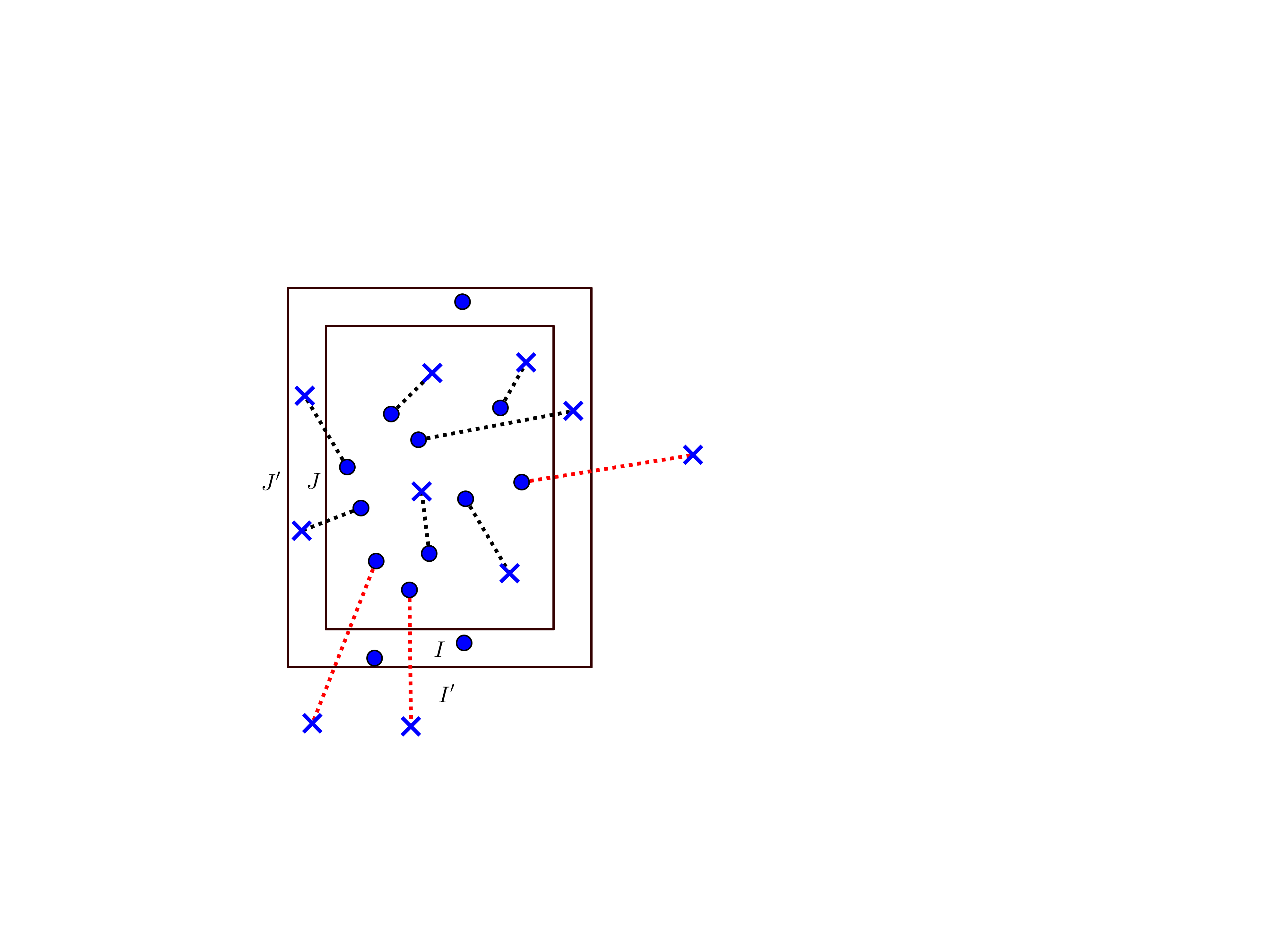}
\label{fig:cutnew1}
\end{figure}

We now show that if $\pc(\pi_1, \pi_2) \leq \epsilon$, then 
$\pn(\pi_1, \pi_2) \leq 48 \sqrt{\epsilon}$. We may assume that $n \geq 2$ as otherwise these distances are all $0$. We will find a bijection $\theta$ such that $\frac{1}{{n \choose 2}}\sum_{i=1}^n|i-\theta(i)| + |\pi_1(i) - \pi_2(\theta(i))| \leq 48\sqrt{\epsilon}$. 
To see this, we partition $[0,1]^2$ into $d^2$ squares, each of side length $1/d$, where $d = 2^{-h}$ with $h=\lfloor \log_2(1/2\sqrt{\epsilon}) \rfloor.$ 
We will define $\theta$ which matches points of $\pi_1$ to points in $\pi_2$ recursively in rounds. In round $0$, we match up as many points of $\pi_1$ to points in $\pi_2$ as possible that lie in the same dyadic square of side length $d$. In round $\ell$, we match up as many not yet matched points of $\pi_1$ to not yet matched points in $\pi_2$ as possible that lie in the same dyadic square of side length $2^{\ell}d$. For each pair of points matched in level $\ell$, their $L_1$ distance is at most $2^{\ell+1}d$. In the last round $\ell=h$, the remaining unmatched points in $\pi_1$ necessarily get matched to the unmatched points in $\pi_2$ as they all lie in the square of side length $1=2^hd$. 

As the discrepancy in the number of points in $\pi_1$ and in $\pi_2$ in any square is at most $\epsilon n$, after round $\ell$, the number of unmatched points is at most $\epsilon n(2^{-\ell}/d)^2$. Thus round $\ell+1$ matches at most $\epsilon n(2^{-\ell}/d)^2$ pairs of points, each such pair of points has $L_1$ distance at most $2^{\ell+2}d$. Hence, the sum of the $L_1$ distances of the pairs matched at level $\ell+1$ is at most $\epsilon n(2^{-\ell}/d)^2 \cdot 2^{\ell+2}d = (2^{2-\ell}/d)\epsilon n$. Summing over all $\ell \geq 0$ gives a total sum of $L_1$ distances in $[0,1]^2$ less than $8\epsilon d^{-1} n$. Also, there are at most $n$ points in $\pi_1$ that get matched to a point in $\pi_2$ in the same dyadic square of side length $d$ so that there $L_1$ distance is at most $2d$. The sum of these $L_1$ distances is at most $2dn$. Thus, the earth mover's distance between $\pi_1$ and $\pi_2$ is at most $$\frac{n}{\binom{n}{2}}\left(2dn+8\epsilon d^{-1} n\right) \leq 8\left(d+4\epsilon d^{-1}\right) \leq 48\sqrt{\epsilon},$$
where the last inequality uses $\epsilon^{1/2} \leq d \leq 2\epsilon^{1/2}$. 
Thus we have proved $\pn(\pi_1, \pi_2) \leq 48\pc(\pi_1, \pi_2)^{1/2}$. 
\end{proof}

The next lemma shows that, up to two logarithmic factors, the rectangular distance is the same as the dyadic distance, which is much faster to compute or approximate.  It is easy to give a construction showing that the upper bound is sometimes tight, and reverse engineering the proof gives a construction showing that the lower bound is sometimes tight up to an absolute constant factor. 

\begin{lem}
\[ \frac{\pc(\pi_1, \pi_2)}{8\log_2^2\left( {8/\pc(\pi_1, \pi_2)}\right)} \leq    \pd(\pi_1, \pi_2) \leq \pc(\pi_1, \pi_2),\]
\end{lem}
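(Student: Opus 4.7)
The upper bound $\pd(\pi_1, \pi_2) \leq \pc(\pi_1, \pi_2)$ is immediate from the definitions: every dyadic interval is a closed subinterval of $[0,1]$, so the family of dyadic rectangles is a subfamily of all axis-parallel rectangles, and the maximum discrepancy can only shrink when we restrict to a smaller family.

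For the lower bound, I set $\epsilon = \pc(\pi_1, \pi_2)$ and $\delta = \pd(\pi_1, \pi_2)$, and take an extremal rectangle $R = I \times J$ with $|R(\pi_1) - R(\pi_2)| \geq \epsilon n$. The plan is to approximate $R$ by a small disjoint union of dyadic rectangles at a carefully chosen resolution. Concretely, fix an integer $K \geq 1$; round the endpoints of $I=[a,b]$ inward to the nearest multiples of $2^{-K}$, obtaining a subinterval $I_{\rm main} \subseteq I$ with $|I \setminus I_{\rm main}| \leq 2 \cdot 2^{-K}$. Because both endpoints of $I_{\rm main}$ lie on the depth-$K$ dyadic grid, a standard greedy dyadic decomposition expresses it as a disjoint union of at most $2K$ dyadic intervals $I_1, \ldots, I_{k_1}$ of length $\geq 2^{-K}$. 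Applying the same procedure to $J$, I obtain $R_{\rm main} := I_{\rm main} \times J_{\rm main}$, which is a disjoint union of at most $4K^2$ dyadic rectangles $I_j \times J_\ell$, and $R \setminus R_{\rm main} \subseteq (I \setminus I_{\rm main}) \times J \;\cup\; I \times (J \setminus J_{\rm main})$.

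Since $\pi_1, \pi_2$ each have at most one point in every column and row, the number of points of either permutation in the boundary strips is at most $4 \cdot 2^{-K} n + 2$, and the triangle inequality applied to the dyadic decomposition of $R_{\rm main}$ gives
\begin{equation*}
\epsilon n \;\leq\; |R(\pi_1) - R(\pi_2)| \;\leq\; \sum_{j,\ell} \bigl|(I_j \times J_\ell)(\pi_1) - (I_j \times J_\ell)(\pi_2)\bigr| + 8 \cdot 2^{-K} n + 4 \;\leq\; 4 K^2 \delta n + 8 \cdot 2^{-K} n + 4.
\end{equation*}
Choosing $K \sim \log_2(1/\epsilon)$ so that the boundary term $8 \cdot 2^{-K}$ is absorbed into $\epsilon/2$, the remaining inequality rearranges to $\delta \geq \epsilon/(O(1) \cdot \log_2^2(O(1)/\epsilon))$, giving the claimed lower bound up to the precise values of the constants.

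The overall strategy is entirely standard; the only delicate point is matching the exact constant ($8$ in the denominator, $8/\pc$ inside the log) in the statement. To do so one either replaces the bound $k_1 \leq 2K$ by the tight $k_1 \leq 2K-2$ for subintervals with both endpoints strictly inside $[0,1]$, or uses a signed decomposition via the four ``corner'' rectangles $[0,x]\times[0,y]$, each of which uses only at most $K^2$ dyadic rectangles (one per $1$-bit in the binary expansions of the coordinates at depth $K$); this, combined with slightly sharper bookkeeping of the boundary strips, yields the stated factor $8$.
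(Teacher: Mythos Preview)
Your proposal is correct and follows essentially the same approach as the paper: both arguments approximate the extremal rectangle $I\times J$ by a disjoint union of $O(\log^2(1/\epsilon))$ dyadic rectangles plus thin boundary strips carrying at most $\epsilon n/2$ points in total, then apply the triangle inequality and pigeonhole. The only cosmetic difference is that the paper describes the one-dimensional decomposition recursively (repeatedly extract the largest dyadic subinterval, halving the remainder each step, stopping once the leftovers have total length $\le \epsilon/4$), obtaining at most $5+2\log_2(1/\epsilon)$ dyadic intervals per side, whereas you fix a depth $K$ up front and invoke the standard dyadic decomposition of a grid-aligned interval into at most $2K$ pieces; these are two descriptions of the same construction and yield the same $O(\log^2)$ count. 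Your acknowledgment that matching the exact constant $8$ requires slightly tighter bookkeeping is accurate; the paper gets it directly from $2(5+2\log_2(1/\epsilon))^2 \le 8\log_2^2(8/\epsilon)$.
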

\begin{proof}
It is clear from the definition that $\pc(\pi_1, \pi_2) \geq \pd(\pi_1, \pi_2)$.

We thus need to show the other inequality. 
Given $\pc(\pi_1, \pi_2) \geq \epsilon$, we will show $\pd(\pi_1, \pi_2) \geq \frac{\epsilon}{2\log^2 (1/\epsilon)}$.
Since $\pc(\pi_1, \pi_2)\geq \epsilon$, by definition, there exist intervals $I, J\subset [0,1]$ such that $\frac{1}{n}\max_{I, J} | (I \times J)(\pi_1) -  (I \times J)(\pi_2)| \geq \epsilon$. We want to tile most of $I \times J$ by dyadic rectangles; and thus one of them has large difference between the number of points in $\pi_1$ and $\pi_2$. 
We do this by first covering most of the interval $I$ by dyadic intervals, shown in Figure \ref{partI}. 

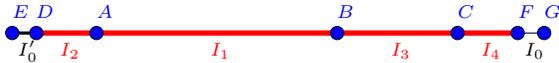
\begin{figure}[h]
\definecolor{ffqqqq}{rgb}{1.,0.,0.}
\definecolor{qqqqff}{rgb}{0.,0.,1.}
\begin{tikzpicture}[line cap=round,line join=round,>=triangle 45,x=0.8cm,y=0.8cm]
\clip(-2.3,2.5) rectangle (7.7,3.7);
\draw [line width=2.pt,color=ffqqqq] (0.,3.)-- (4.,3.);
\draw [line width=2.pt,color=ffqqqq] (4.,3.)-- (6.,3.);
\draw [line width=2.pt,color=ffqqqq] (0.,3.)-- (-1.,3.);
\draw [line width=1.2pt] (-1.,3.)-- (-1.4,3.);
\draw [line width=2.pt,color=ffqqqq] (6.,3.)-- (7.,3.);
\draw (7.,3.)-- (7.44,3.);
\begin{scriptsize}
\draw [fill=qqqqff] (0.,3.) circle (2.5pt);
\draw[color=qqqqff] (0.14,3.36) node {$A$};
\draw [fill=qqqqff] (4.,3.) circle (2.5pt);
\draw[color=qqqqff] (4.14,3.36) node {$B$};
\draw[color=ffqqqq] (2.06,2.7) node {$I_1$};
\draw [fill=qqqqff] (6.,3.) circle (2.5pt);
\draw[color=qqqqff] (6.14,3.36) node {$C$};
\draw[color=ffqqqq] (5.06,2.7) node {$I_3$};
\draw [fill=qqqqff] (-1.,3.) circle (2.5pt);
\draw[color=qqqqff] (-0.86,3.36) node {$D$};
\draw[color=ffqqqq] (-0.44,2.7) node {$I_2$};
\draw [fill=qqqqff] (-1.4,3.) circle (2.5pt);
\draw[color=qqqqff] (-1.26,3.36) node {$E$};
\draw[color=black] (-1.14,2.7) node {$I_0'$};
\draw [fill=qqqqff] (7.,3.) circle (2.5pt);
\draw[color=qqqqff] (7.14,3.36) node {$F$};
\draw[color=ffqqqq] (6.56,2.7) node {$I_4$};
\draw [fill=qqqqff] (7.44,3.) circle (2.5pt);
\draw[color=qqqqff] (7.58,3.36) node {$G$};
\draw[color=black] (7.28,2.7) node {$I_0$};
\end{scriptsize}
\end{tikzpicture}
\caption{Illustration of Partitioning most of an interval $I$ into dyadic intervals. \footnotesize{lnterval $I$ is the segment $EG$. It is mostly covered by four dyadic intervals $I_1 = AB, I_2 = DA, I_3 = BC , I_4 = CF$. The only parts in $I$ not covered by dyadic intervals are $I_0, I_0'$.}}
\label{partI}
\end{figure}

We know $I$ must contain a dyadic interval $I_1$ such that $I_1 \subset I$ and $|I_1| \geq \frac{1}{2} |I|$. Removing $I_1$ from $I$, we are left with at most two other intervals $I_2'$ and $I_3'$ with $I_2'$ coming before $I_3'$ and each of length at most $|I|/2$. In the next level, notice that since $I_1$ is a dyadic interval, $I_2', I_3'$ each have one endpoint being dyadic (i.e., of the form $i/2^k$ for some integers $i, k$). Thus again we can find a dyadic interval $I_2\subset I_2'$ with the same right endpoint as $I_2'$ and $|I_2| \geq |I_2'|/2$; again $I_2' \setminus I_2$ is another interval with one endpoint dyadic. We can similarly find a dyadic interval $I_3 \subset I_3'$ with the same left endpoint as $I_3'$ with $|I_3| \geq |I_3'|/2$ and such that $I_3' \setminus I_3$ is an interval with one endpoint having a dyadic coordinate. We know $ |I \setminus (I_1 \cup I_2 \cup I_3)| \leq (|I \setminus I_1| + \frac{|I_1|}{2}) \leq \frac{|I|}{4}$. Therefore removing $I_1, I_2, I_3$ from $I$ leaves us with at most two remaining intervals with each having a dyadic endpoint and their total length is at most $|I|/4$. We repeat this process. In each step, we find at most two new dyadic intervals and removing them further from $I$ leaves us with two remaining intervals each with one dyadic endpoint and each of these remaining intervals has length at most half of the length of the intervals they came from in the previous step. Thus, we can partition $I$ into at most $1+2\log(4/\epsilon) = 5+2\log(1/\epsilon)$ dyadic intervals $I_1,I_2,\ldots$ and at most two intervals $I_0,I_0'$ such that $|I_0|+|I'_0| \leq \epsilon/4$. This is because in the first step we used one dyadic interval $I_1$, and in each further step, we picked out two dyadic intervals, and the total number of steps used is at most $1+\log_2(\frac{|I|/2}{\epsilon/8}) \leq 1+\log(4/\epsilon)$.  Similarly, we can partition $J$ into at most $1+2\log(4/\epsilon) = 5+2\log(1/\epsilon)$ dyadic intervals $J_1,J_2,\ldots$ and at most two intervals $J_0,J_0'$ such that $|J_0|+|J'_0| \leq \epsilon/4$. 


Therefore we can cover most of the rectangle $I\times J$ by at most $(5 + 2 \cdot \log_2 \frac{1}{\epsilon})^2$ dyadic rectangles $I_i \times J_j$ except  for four rectangles $S_1 = I_0 \times J, S_2 = I_0' \times J, S_3 = I \times J_0, S_4 = I \times J_0'$. This is illustrated in Figure \ref{partr}. 

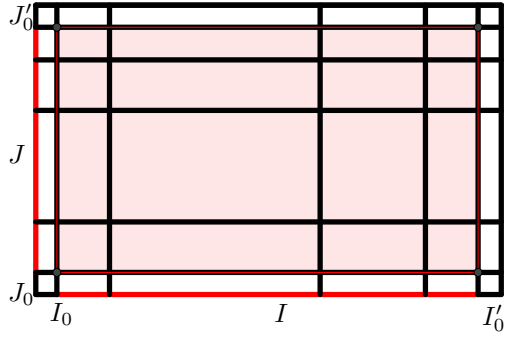
\begin{figure}
\center
\definecolor{uuuuuu}{rgb}{0.26666666666666666,0.26666666666666666,0.26666666666666666}
\definecolor{ffqqqq}{rgb}{1.,0.,0.}
\begin{tikzpicture}[line cap=round,line join=round,>=triangle 45,x=0.7cm,y=0.7cm]
\clip(-4.238,2.2) rectangle (10.106,8.6);
\fill[color=ffqqqq,fill=ffqqqq,fill opacity=0.1] (-1.,8.08) -- (-1.,3.42) -- (7.,3.42) -- (7.,8.08) -- cycle;
\draw [line width=2pt,color=ffqqqq] (0.,3.)-- (4.,3.);
\draw [line width=2pt,color=ffqqqq] (4.,3.)-- (6.,3.);
\draw [line width=2pt,color=ffqqqq] (0.,3.)-- (-1.,3.);
\draw [line width=2pt] (-1.,3.)-- (-1.4,3.);
\draw [line width=2pt,color=ffqqqq] (6.,3.)-- (7.,3.);
\draw [line width=2pt] (7.,3.)-- (7.44,3.);
\draw [line width=2pt] (-1.4,3.)-- (-1.4,3.42);
\draw [line width=2pt,color=ffqqqq] (-1.4,3.42)-- (-1.4,4.38);
\draw [line width=2pt,color=ffqqqq] (-1.4,4.38)-- (-1.4,6.5);
\draw [line width=2pt,color=ffqqqq] (-1.4,6.5)-- (-1.4,7.46);
\draw [line width=2pt,color=ffqqqq] (-1.4,7.46)-- (-1.4,8.08);
\draw [line width=2pt] (-1.4,8.08)-- (-1.4,8.5);
\draw [line width=2pt] (-1.4,8.5)-- (7.44,8.5);
\draw [line width=2pt] (-1.4,8.5)-- (7.44,8.5);
\draw [line width=2pt] (7.44,8.5)-- (7.44,3.);
\draw [line width=2pt] (-1.4,8.08)-- (7.44,8.08);
\draw [line width=2pt] (-1.4,7.46)-- (7.44,7.46);
\draw [line width=2pt] (-1.4,6.5)-- (7.44,6.5);
\draw [line width=2pt] (-1.4,4.38)-- (7.44,4.38);
\draw [line width=2pt] (-1.4,3.42)-- (7.44,3.42);
\draw [line width=2pt] (-1.,3.)-- (-1.,8.5);
\draw [line width=2pt] (0.,3.)-- (0.,8.5);
\draw [line width=2pt] (4.,3.)-- (4.,8.5);
\draw [line width=2pt] (6.,3.)-- (6.,8.5);
\draw [line width=2pt] (7.,3.)-- (7.,8.5);
\draw [color=ffqqqq] (-1.,8.08)-- (-1.,3.42);
\draw [color=ffqqqq] (-1.,3.42)-- (7.,3.42);
\draw [color=ffqqqq] (7.,3.42)-- (7.,8.08);
\draw [color=ffqqqq] (7.,8.08)-- (-1.,8.08);
\draw (2.956,3) node[anchor=north west] {$I$};
\draw (-2.1,6.014) node[anchor=north west] {$J$};
\draw (-1.31,3.022) node[anchor=north west] {$I_0$};
\draw (6.9,2.978) node[anchor=north west] {$I_0'$};
\draw (-2.1,3.4) node[anchor=north west] {$J_0$};
\draw (-2.1,8.7) node[anchor=north west] {$J_0'$};
\begin{scriptsize}
\draw [fill=uuuuuu] (-1.,8.08) circle (1.5pt);
\draw [fill=uuuuuu] (-1.,3.42) circle (1.5pt);
\draw [fill=uuuuuu] (7.,3.42) circle (1.5pt);
\draw [fill=uuuuuu] (7.,8.08) circle (1.5pt);
\end{scriptsize}
\end{tikzpicture}
\caption{Covering most of the rectangle $I \times J$ by dyadic rectangles by the dyadic partitioning of most of $I, J$. \footnotesize{$I \times J$ is the largest rectangle. The sub-interval $\tilde I$ of $I$ covered by dyadic intervals is shown in red. $\tilde I = I \setminus (I_0 \cup I_0')$. Similar for the sub-interval $\tilde J$ of $J$ covered by dyadic intervals. The dyadic rectangles are defined by the dyadic intervals covering $\tilde I, \tilde J$. Thus the shaded rectangle $\tilde I \times \tilde J$ is partitioned by dyadic rectangles.  } }
\label{partr}
\end{figure}

However, notice that for any rectangle $I' \times J'$, we have $0 \leq (I' \times J')(\pi_1) \leq n  \cdot \min(|I'|, |J'|)$ simply because $\pi_1$ is a permutation, and the same applies to $\pi_2$. Therefore $0 \leq (\bigcup_{i=1}^4 S_i)(\pi_1) \leq |I_0| + |I_0'|+|J_0| + |J_0'| \leq  n\epsilon/2$. Similarly, $0 \leq (\bigcup_{i=1}^4 S_i)(\pi_2) \leq n\epsilon/2$. Thus
$| (\bigcup_{i=1}^4 S_i)(\pi_1)  -(\bigcup_{i=1}^4 S_i)(\pi_2) | \leq n\epsilon/2$.
Let $\tilde I = I \setminus (I_0 \cup I_0')$, and $\tilde J = J \setminus (J_0 \cup J_0')$.
Since $|(I\times J)(\pi_1) - (I\times J)(\pi_2)| \geq n\epsilon$ and $\bigcup_{i=1}^4 S_i = (I \times J) \setminus (\tilde I \times \tilde J)$, we have that
 \[ |( \tilde I \times \tilde J)(\pi_1) - (\tilde I\times\tilde  J)(\pi_2)| \geq n\epsilon -n\epsilon/2 \geq n\epsilon /2.\] 
However, we have shown that $\tilde I \times \tilde J$ is covered by at most $(5 + 2 \cdot \log_2 \frac{1}{\epsilon})^2$ dyadic rectangles; therefore there must be a dyadic rectangle $S$ such that 
 \[ |S(\pi_1) - S(\pi_2)| \geq \frac{n\epsilon}{2\left(5 + 2 \cdot \log_2 \frac{1}{\epsilon}\right)^2} \geq \frac{n\epsilon}{8\left(\log_2 \frac{8}{\epsilon}\right)^2} .\] 
This implies 
$
\pd( \pi_1,\pi_2) \geq  \frac{\epsilon}{8\left(\log_2 \frac{8}{\epsilon}\right)^2}.$
\end{proof}

The next lemma relates the rectangular distance and the square distance. It is easy to see that the lower bound is sometimes tight, and reverse engineering the proof shows that the upper bound is sometimes tight up to an absolute constant factor. 
\begin{lem}
\[ \ps( \pi_1,\pi_2) \leq \pc( \pi_1,\pi_2) \leq \sqrt{7} \ps( \pi_1,\pi_2)^{1/2}.\]
\end{lem}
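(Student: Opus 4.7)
The plan is to handle the two inequalities separately. The lower bound $\ps \leq \pc$ is immediate from the definitions, since every square is a rectangle, so the maximum defining $\pc$ is taken over a larger family than the maximum defining $\ps$.

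For the upper bound, I will show the equivalent statement that if $\pc(\pi_1,\pi_2) \geq \epsilon$ then $\ps(\pi_1,\pi_2) \geq \epsilon^2/7$. Let $I \times J$ be a witnessing rectangle, so $|(I\times J)(\pi_1) - (I\times J)(\pi_2)| \geq n\epsilon$; write $D(R)$ for the signed discrepancy $R(\pi_1) - R(\pi_2)$. Set $a = |I|$ and $b = |J|$, and assume without loss of generality $a \leq b \leq 1$. Since each permutation puts at most $na + O(1)$ points in a strip of width $a$, a trivial count gives $\epsilon \leq a + O(1/n)$, and hence $a \gtrsim \epsilon$ for large $n$.

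The main step is a tiling argument. I will partition $J$ into $k-1 = \lfloor b/a \rfloor$ consecutive intervals of length $a$ plus a leftover of length $c = b-(k-1)a \in (0,a]$, where $k = \lceil b/a \rceil$, which partitions $I \times J$ into $k-1$ squares of side $a$ plus a residual $a \times c$ rectangle. Each square contributes at most $n\ps$ in absolute value to the signed discrepancy. For the residual I will use the better of two bounds: the trivial point count $nc$, or the bound $n\ps + n(a-c)$ obtained by extending the residual to a full $a \times a$ square and applying the triangle inequality. Optimizing over the leftover length yields the key inequality
\[
\epsilon \;\leq\; (k - 1/2)\,\ps + a/2.
\]
Using $b \leq 1$ so that $k \leq 1/a + 1$, this becomes $\epsilon \leq \ps/a + \ps/2 + a/2$. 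Multiplying through by $a$ and rearranging gives $\ps \geq a(2\epsilon - a)/(2+a)$, which directly yields $\ps \geq \epsilon^2/7$ for $a$ in an interval of the form $[\epsilon, c_0 \epsilon]$ with $c_0$ slightly less than $2$.

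For the remaining case where $a$ is appreciably larger than $\epsilon$, I will supplement the tiling with an extension argument. Extending $I$ to an interval $I'$ of length $b$ (which fits in $[0,1]$ since $b \leq 1$) makes $I' \times J$ a square of side $b$, giving $\epsilon \leq \ps + (b-a)$, hence $b - a \geq \epsilon - \ps$; this rules out $b$ being too close to $a$. Applying the tiling bound to the complementary rectangle $(I'\setminus I) \times J$, of dimensions $(b-a) \times b$ and discrepancy at least $n(\epsilon - \ps)$, then handles the elongated case, with a further iteration if the remaining rectangle is itself elongated. The main obstacle is achieving the precise constant $\sqrt{7}$: this requires balancing the tiling and extension bounds uniformly across the case analysis on the relative sizes of $a$, $b$, and $\epsilon$, and the constant $7$ emerges from the worst-case algebraic combination of these two ingredients.
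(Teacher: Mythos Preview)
Your lower bound $\ps \leq \pc$ is correct, and your one-step tiling inequality $\epsilon \leq (k-\tfrac12)\ps + \tfrac{a}{2}$ is derived correctly. This does yield $\ps \geq \epsilon^2/7$ when the short side $a$ is close to $\epsilon$. The gap is in the case $a$ substantially larger than $\epsilon$: your extension-and-iterate scheme does not terminate usefully. Concretely, take $\epsilon$ small, $a = 3\epsilon$, $b=1$, so that $k \approx 1/(3\epsilon)$ and the leftover has height $c \approx \epsilon$. Both your residual bounds are slack (one gives $\epsilon \leq (k-1)\ps + \epsilon$, the other $\epsilon \leq k\ps + 2\epsilon$), so the one-step tiling says nothing. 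Your extension replaces $I$ by $I'$ of length $b=1$; the complementary rectangle $(I'\setminus I)\times J$ then has dimensions roughly $(1-3\epsilon)\times 1$, i.e.\ short side close to $1$, not close to $\epsilon$, so the tiling bound on the complement is again vacuous. Iterating by extending the complement back to a $1\times 1$ square just returns you to the original rectangle. So the iteration cycles rather than making progress, and the sentence ``the constant $7$ emerges from the worst-case algebraic combination'' is not supported.

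The paper's argument fixes exactly this by recursing on the \emph{leftover} rectangle rather than on a complement. After tiling $I\times J$ by squares of side $a$ and getting a leftover $a\times c$ with $c<a$, one tiles the leftover by squares of side $c$, and so on --- this is the Euclidean-algorithm decomposition of a rectangle into squares. One continues until the short side drops below $\epsilon/2$; the key step is showing that the total number of squares produced is at most $2\log_2(2/\epsilon)+2/\epsilon$ (using that the product of the successive aspect ratios telescopes to at most $2/\epsilon$, so the sum is maximised at the extremes). The final leftover, having one side at most $\epsilon/2$, contributes at most $n\epsilon/2$ to the discrepancy, and pigeonhole over the squares then gives
\[
\ps \;\geq\; \frac{\epsilon/2}{2\log_2(2/\epsilon)+2/\epsilon} \;\geq\; \frac{\epsilon^2}{7}.
\]
If you want to salvage your approach, the right move is to iterate the tiling on the $a\times c$ leftover (not on an extended complement); once you do that you are essentially carrying out the paper's argument, and the square count is what produces the constant.
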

\begin{proof}

It is clear from the definition that $\pc(\pi_1, \pi_2) \geq \ps(\pi_1, \pi_2)$.

We thus need to show the other inequality. 
Given $\pc( \pi_1,\pi_2) \geq \epsilon$, we will show $\ps( \pi_1,\pi_2) \geq \epsilon^2/7$.

As $\pc( \pi_1,\pi_2)\geq \epsilon$, there exist intervals $I, J\subset [0,1]$ such that $\frac{1}{n}\max_{I, J} | (I \times J)(\pi_1) -  (I \times J)(\pi_2)| \geq \epsilon$. We want to find a square in $I \times J$ in which there are many more points from $\pi_1$ than from $\pi_2$, or vice versa. 

Assuming $|I| \geq |J|$, then we partition $I$ into $\lfloor |I|/|J| \rfloor$ intervals of length $|J|$ and a remaining interval with length $|I| - |J| \lfloor |I|/|J| \rfloor$. Thus the rectangle $J \times I$ is partitioned into $\lfloor |I|/|J| \rfloor$ squares each of side length $|J|$ and a remaining rectangle of size $|J| \times (|I| - |J| \lfloor |I|/|J| \rfloor)$. An example of this step can be seen as the three largest squares in Figure \ref{partrs}. And we do the same partitioning procedure for the remaining rectangle by covering the longer side (now it is the interval of length $|J|$) by as many intervals of length $(|I| - |J| \lfloor |I|/|J| \rfloor)$ as possible. We repeat this process, until the remaining interval has length at most $\epsilon/2$. In Figure \ref{partrs}, we repeat this step for four rounds (corresponding to squares of four different sizes), and stop at the smallest white rectangle since its shorter side has length at most $\epsilon/2$.

\begin{figure}[H]

\centering

\caption{Illustration of covering most of a rectangle by squares. \footnotesize{In this example, the rectangle $I \times J$ is covered by squares of four different sizes (the shaded squares) and a remaining rectangle $R$ (the not shaded rectangle in the top right corner) with one side length smaller than $\epsilon/2$.}}
\label{partrs}

\includegraphics[scale=0.5]{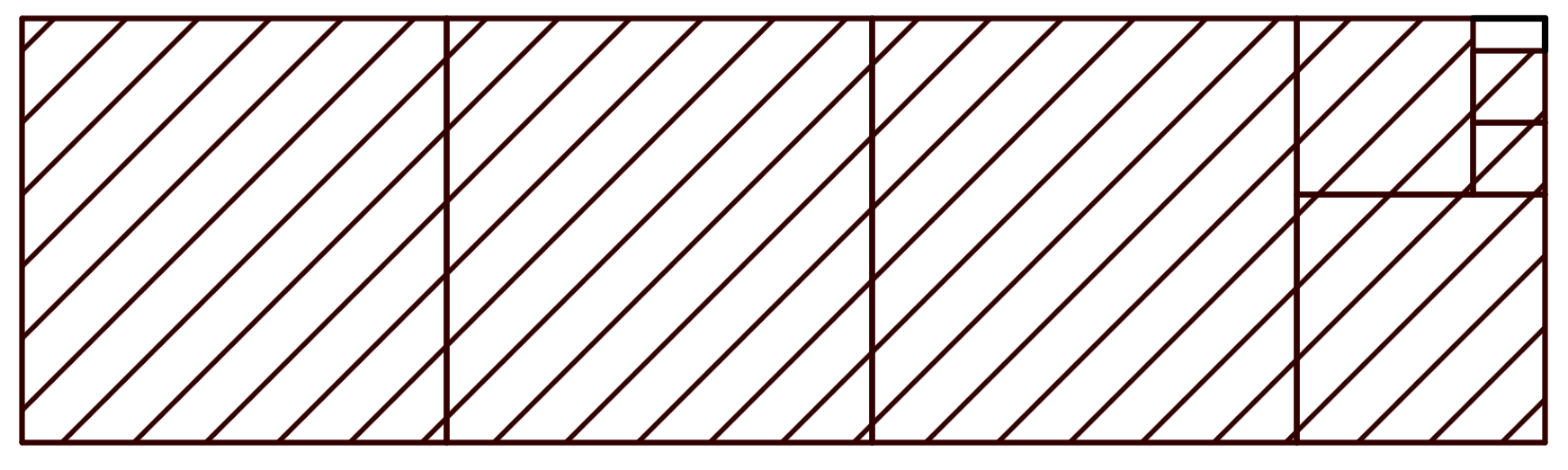}
\end{figure}

We want to bound the number of squares we obtained. 
Notice that for each rectangle of size $a_1 \times a_2$, assuming $a_1 \geq a_2$, after cutting it into as many squares of size $a_2 \times a_2$ as possible, the remaining rectangle has size $a_3 \times a_2$, where $0 \leq a_3 < a_2$ satisfies $a_1 = k_1a_2 + a_3$ for some positive integer $k$. Since $k_1 \geq 1$ and $a_3 < a_2$, clearly we have $a_3 \leq a_1/2$. 
We then partition the rectangle $a_3 \times a_2$ into $\lfloor a_2/a_3 \rfloor$ squares of size $a_3 \times a_3$, and a remaining rectangle is of size $a_3 \times a_4$ where $0 \leq a_4 < a_3$. By the same argument, again we have $a_4 \leq a_2/2$. 
We repeat the process, obtaining squares of side lengths $a_1 > a_2 > \dots$ until the side length of some rectangle is no more than $\epsilon/2$. Say it stops after subdividing a rectangle of size $a_{N-1} \times a_N$ into many squares, and the remaining rectangle is of size $a_N \times a_{N+1}$ where $0 \leq a_{N+1} \leq \epsilon/2$.  
Since $a_{i} \leq a_{i-2}/2$ and $a_{N} \geq \epsilon/2$, we have that 
\beq N \leq 2\log_2 (2/\epsilon). \label{boundN} \eeq 
The total number of squares  is bounded above by 
\beq a_1 / a_2 + a_2/a_3 + \dots + a_{N-1}/a_N. \label{numbersquare} \eeq
Now, to upper bound the sum in (\ref{numbersquare}), notice that the product of the terms is $a_1 / a_N <1 / (\epsilon/2) = 2/\epsilon$ since $a_N > \epsilon/2$. Furthermore, for each $i \leq N-1$, $1 < a_i / a_{i+1} \leq 1/ (\epsilon/2) = 2/\epsilon$ as $a_i \leq 1, a_{i+1} \geq \epsilon/2$ for $i \leq N-1$. Given an upper bound of the product of non-negative terms and the range for each term, we know that the sum of the terms is maximized when the terms are at their extreme values; in our case, it would be that all the terms are either $2/\epsilon$ or $1$. Therefore (\ref{numbersquare}) is bounded above by $(N-2) + 2/\epsilon$. Plugging in (\ref{boundN}), the number of squares is bounded above by $2\log_2 (2/\epsilon) + 2/\epsilon$. 

Thus we have partitioned the rectangle $I \times J$ into at most $2\log_2 (2/\epsilon) + 2/\epsilon$ squares and a rectangle $R$ with one side length at most $\epsilon/2$. Thus $|R(\pi_1) - R(\pi_2)| \leq \epsilon/2$. Therefore 
$\frac{1}{n} | ((I \times J) \setminus R)(\pi_1) -  ((I \times J)\setminus R)(\pi_2)| \geq \epsilon/2$, where $(I \times J) \setminus R$ is partitioned into at most $2\log_2 (2/\epsilon) + 2/\epsilon$ squares. Thus one of the squares, call it $S$, satisfies
$\frac{1}{n} | S(\pi_1) -  S(\pi_2)| \geq \frac{\epsilon}{2(2\log_2 (2/\epsilon) + 2/\epsilon)} \geq \epsilon^2/7.$ The last inequality holds for all $0<\epsilon \leq 1$. 

\end{proof}

\section{Two-sided property testing under the planar metrics}\label{sec:twosidedpt}

Recall from Corollary \ref{planarfactortwo} that the planar tau distance and the earth mover's distance are within a factor two. Thus, to test with respect to either metric is essentially the same thing. Hence, we can pick the metric which is the easiest to analyze, and, for our purposes, it is the earth mover's distance. 

For a hereditary property $\PP$, we propose the following algorithm, a two-sided tester for property $\PP$. It is closely related to the blow-up parameter for $\PP$, which is defined in Definition \ref{def:blowupp}.

\begin{algorithm}
\caption{Two-sided Tester}\label{tester:1}
Let $M$ be as specified in Theorem \ref{newm}. Given a permutation $\pi$ of length $n$, we pick a subpermutation $\pi'$ of $\pi$ of size $M$ uniformly at random. That is, we pick a subset $S \subset [n]$ of size $M$ uniformly at random, and $\pi_0$ is the subpermutation of $\pi$ induced on $S$. 

\begin{enumerate}
\item If for all integers $k$, there exists a $k$-blow-up of $\pi' \in \PP$, our algorithm outputs ``$\pi$ is in $\PP$''. Equivalently, if $k^*(M) = \infty$, then output ``$\pi$ is in $\PP$'', and otherwise $k^*(M)$ is finite (Recall the definition of $k^*(M)$ in Definition \ref{def:blowupp}), and we output ``$\pi$ is in $\PP$"  if and only if there is a $k^*(M)$-blow-up of $\pi' \in \PP$.
\item If no $k^*(M)$-blow-up of $\pi'$ is in $\PP$, our algorithm outputs ``$\pi$ is not in $\PP$''.
\end{enumerate}
\end{algorithm}

We will analyze this algorithm and show its effectiveness in the following theorem. Let $\dist_{\text{PF}}(\pi, \PP)$ be the minimum of $\pn( \pi,\pi_0)$ over all $\pi_0 \in \PP$ of the same size as $\pi$. If $\PP$ has no permutation of size $| \pi |$, then  $\dist_{\text{PF}}(\pi, \PP)$ is defined to be $1$. 

We restate Theorem \ref{maincutnearlylinear} below for convenience. 

\begin{thm}\label{newm}
Let $\PP$ be a proper hereditary family of permutations, $\sigma$ be a smallest permutation not in $\PP$, and $s = |\sigma|$.
Let $c(\sigma)$ be the F\"uredi-Hajnal constant of $\sigma$.

For any fixed $0<\epsilon \leq  1$, let $M = \frac{2000 c(\sigma)\log^2(2/\epsilon)}{\epsilon}$, so $M \geq |\sigma|$ and hence $k^*(M)$ is finite. Then for any permutation $\pi$ of length at least $32M^5 k^*(M) / \epsilon^3$, the following holds. 
\begin{enumerate}
\item If $\pi \in \PP$, then with probability at least $1-\epsilon$, the algorithm outputs ``$\pi$ is in $\PP$". 
\item If $\dist_{\text{PF}}(\pi, \PP) > \epsilon$, then with probability at least $1- \epsilon$, the algorithm outputs ``$\pi$ is not in $\PP$''.
\end{enumerate}
\end{thm}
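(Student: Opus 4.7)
The plan is to analyze the algorithm in the two cases separately. Recall that the algorithm outputs ``$\pi \in \PP$'' precisely when some $k^*(M)$-blow-up of the sample $\pi'$ lies in $\PP$; by heredity of $\PP$ and the monotonicity of blow-ups (a $k'$-blow-up of $\alpha$ with $k' \geq k$ contains a $k$-blow-up of $\alpha$ as a subpermutation), this reduces to controlling which small forbidden patterns happen to sit inside $\pi'$.

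\textbf{Completeness ($\pi \in \PP$).} The goal is to show that with probability at least $1-\epsilon$ a $k^*(M)$-blow-up of $\pi'$ can be realized as a subpermutation of $\pi$, which then lies in $\PP$ by heredity. Let the sampled positions be $s_1 < \cdots < s_M$ and let $r(i)$ denote the rank of $\pi(s_i)$ among the sampled values. The construction is to take disjoint consecutive $x$-intervals $X_1 < \cdots < X_M$ centered on the $s_i$ and disjoint consecutive $y$-intervals $Y_1 < \cdots < Y_M$ centered on the sorted sample values; the $M$ ``diagonal'' cells $X_i \times Y_{r(i)}$ then respect the order structure of $\pi'$ (the $x$-blocks are consecutive, and the $y$-values of each block all lie in the corresponding $Y_{r(i)}$, with the $Y_j$ ordered consistently with $\pi'$). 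A Chernoff-type concentration bound on the number of points of $\pi$ in each cell, combined with the enormous lower bound $n \geq 32 M^5 k^*(M)/\epsilon^3$ (which makes the expected count per cell at least $\Omega(M^3 k^*(M)/\epsilon^3) \gg k^*(M)$), shows that with probability at least $1-\epsilon$ every diagonal cell contains at least $k^*(M)$ points of $\pi$. Extracting $k^*(M)$ points from each cell yields the desired blow-up.

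\textbf{Soundness ($\pn(\pi, \PP) > \epsilon$).} The goal is to show that with probability at least $1-\epsilon$ no $k^*(M)$-blow-up of $\pi'$ is in $\PP$. It suffices to show that $\pi'$ contains a copy of the smallest forbidden pattern $\sigma$: any $k \geq 1$-blow-up of $\pi'$ then contains a $k$-blow-up of $\sigma$, which in turn contains $\sigma$ and is therefore not in $\PP$. By Theorem~\ref{thmquadraticmetric}, the hypothesis gives $\pc(\pi, \PP) \geq (\epsilon/48)^2$. The crux is then a F\"uredi-Hajnal-flavored structural lemma: since $\pi$ is far from $\PP$ in rectangular distance, one can locate many disjoint ``witness regions''\,---\, on the order of $\Omega(\epsilon/(c(\sigma) \log^2(1/\epsilon)))$ of them\,---\, each a rectangle containing a significant fraction of the points of $\pi$, arranged as a large blow-up of $\sigma$ inside $\pi$. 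The F\"uredi-Hajnal bound $\mathrm{ex}(n,\sigma) \leq c(\sigma)n$ enters here: if the density of such witnesses were below this threshold, one could delete or re-route a small fraction of $\pi$'s points (small in rectangular distance) to kill all $\sigma$-patterns and produce a member of $\PP$ close to $\pi$, contradicting the distance bound. A standard hitting argument (pick at least one sample point from each of $s$ rectangles forming a $\sigma$-copy) then shows that a uniform $M$-sample with $M = 2000 c(\sigma) \log^2(1/\epsilon)/\epsilon$ succeeds except with probability $\epsilon$.

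\textbf{Main obstacle.} The substantive step is the soundness removal-type lemma, which must convert the $\Omega(\epsilon^2)$ rectangular-distance lower bound into $\Omega(\epsilon/c(\sigma))$ density (up to $\log^2$) of structured $\sigma$-witnesses in $\pi$. The $\log^2(1/\epsilon)$ factor in $M$ precisely matches the $\log^2$ factor in the $\pc$-vs-$\pd$ lemma proved earlier in the paper, suggesting the approach: first pass to a dyadic witness rectangle of the rectangular discrepancy, then apply $\mathrm{ex}(n,\sigma) \leq c(\sigma) n$ inside that rectangle to force many disjoint $\sigma$-patterns by a pigeonhole/saturation argument. Completeness, by contrast, is essentially a concentration-and-union-bound calculation enabled by the very large $n_0$ compared to $M \cdot k^*(M)$.
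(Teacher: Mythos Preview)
Your soundness argument has a genuine gap: you reduce to showing that the sample $\pi'$ contains the single smallest forbidden pattern $\sigma$, but this need not hold even when $\pi$ is maximally far from $\PP$. Take $\PP$ to be the family avoiding both $123$ and $321$ (so $\PP$ contains only permutations of length at most $4$), choose $\sigma = 123$, and let $\pi$ be the decreasing permutation of length $n$. Then $\text{dist}_{\text{PF}}(\pi,\PP)=1$, yet $\pi$ (and hence every subpermutation $\pi'$) is $\sigma$-free. Your removal-style claim---that being far from $\PP$ forces many structured $\sigma$-witnesses---conflates ``$\sigma$-free'' with ``in $\PP$''; killing all $\sigma$-patterns does not place a permutation in $\PP$ when $\PP$ has other forbidden patterns. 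The paper's proof is instead a dichotomy built on a dyadic partition of $[0,1]^2$ into dense (``rich'') and sparse (``frozen'') squares: \emph{either} some level has many frozen squares, in which case their (dense) parent squares are numerous enough that Marcus--Tardos yields a $\sigma$-configuration hit by the sample; \emph{or} every level has few frozen squares, in which case one constructs a blow-up $\tilde\alpha$ of a subpermutation $\alpha$ of $\pi'$ with $\pn(\pi,\tilde\alpha)<\epsilon/2$, and shows any other blow-up $\tilde\alpha'$ with the same block sizes satisfies $\pn(\tilde\alpha,\tilde\alpha')<\epsilon/2$. If the tester outputs ``in $\PP$'' one then extracts such a $\tilde\alpha'\in\PP$, contradicting $\pn(\pi,\PP)>\epsilon$. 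In the decreasing example above the second branch applies.

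Your completeness sketch also needs repair. You propose cells $X_i\times Y_{r(i)}$ centered on the sampled coordinates and then invoke concentration to get $\geq k^*(M)$ points per cell, but there is no a priori lower bound on the density of $\pi$ in such post-hoc cells: a sample point can land in an arbitrarily sparse region of $\pi$, and the ``expected count'' you cite presumes uniformity that $\pi$ need not have. The paper instead fixes a $t\times t$ grid in advance (with $t=4M^2/\epsilon$), calls a square dense if it holds at least $\delta n$ points with $\delta=\epsilon^3/(32M^5)$, and shows that with probability $\geq 1-\epsilon$ all $M$ samples fall in distinct rows, distinct columns, and dense squares; each such square then already contains $\delta n \geq k^*(M)$ points of $\pi$, yielding the required blow-up inside $\pi$.
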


We justify these two assertions in the next two subsections, respectively. 

\subsection{When $\pi \in \PP$.}
We first show that when $\pi \in \PP$, with large probability the algorithm outputs $\pi \in \PP$.  

For this part, we need the following key lemma.

\begin{lem}\label{typicalsample}
Suppose $|\pi| = n$. Let $T, k$ be positive integers and $\epsilon'$ a positive number. Suppose $n \geq 32T^5k/\epsilon'^3$. Let $\pi'$ be a subpermutation of $\pi$ of size $T$ picked uniformly at random. Then, with probability at least $1-\epsilon'$, some $k$-blow-up of $\pi'$ is in $\pi$. 
\end{lem}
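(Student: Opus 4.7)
The plan is to build, from the random sample $S=\{i_1<\cdots<i_T\}$, a canonical ``skeleton'' of $T$ pairwise-disjoint boxes each centered near a sample point, and to show that with probability at least $1-\epsilon'$ each box contains at least $k$ points of $\pi$; any choice of $k$ points per box then immediately furnishes a $k$-blow-up of $\pi'=\pi|_S$. Concretely, let $p_j=(i_j,\pi(i_j))$ and let $\pi'(j)$ denote the rank of $\pi(i_j)$ among the sample $y$-values. Using the midpoints of consecutive $i_j$'s (with the natural boundary convention), partition $[n]$ in the $x$-direction into disjoint intervals $I_1,\ldots,I_T$ with $i_j\in I_j$; analogously, using midpoints of the sorted sample $y$-values, form $y$-intervals $J_1,\ldots,J_T$. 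Define skeleton cells $B_j:=I_j\times J_{\pi'(j)}$. Since the $I_j$'s are pairwise disjoint increasing $x$-intervals and the $J_l$'s are pairwise disjoint increasing $y$-intervals, any $k$-point selection from each $B_j$ yields $Tk$ points whose pattern is a $k$-blow-up of $\pi'$ in $\pi$. Hence it suffices to prove $\Pr[\exists j:|B_j\cap\pi|<k]\le\epsilon'$.

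By a union bound the goal reduces to $\Pr[|B_j\cap\pi|<k]\le\epsilon'/T$ for each fixed $j$. I would condition on the remaining sample $S\setminus\{i_j\}$: this fixes $a:=i_{j-1}$, $b:=i_{j+1}$, and the other sample $y$-values, while $c:=i_j$ becomes uniform on $\{a+1,\ldots,b-1\}$ minus the other sample indices. A standard spacings concentration for a uniform $T$-subset of $[n]$ shows that, except on an event of probability $\le\epsilon'/(2T)$, both the $x$-gap $b-a$ and the analogous $y$-gap around $\pi(c)$ lie in an interval $[\Omega(n/T^2\epsilon'),\,O(n\log(T/\epsilon')/T)]$, so $|I_j|$ and $|J_{\pi'(j)}|$ are both $\Theta(n/T)$ up to polylog factors, and $B_j$ has area $\Theta(n^2/T^2)$.

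On this typical event, the main analytic step is a double-counting over the random center $c$: a point $q=(q_x,q_y)\in\pi$ lies in $B_j(c)$ iff the $x$-condition $c\in[2q_x-b,2q_x-a)$ and the $y$-condition (forcing $\pi(c)$ into an interval of length $\Theta(n/T)$ around $q_y$) both hold. Summing over $q$ and exploiting that $\pi$ is a bijection with exactly $n$ points lower-bounds $\sum_c|B_j(c)\cap\pi|$ by a quantity of order $n\cdot(b-a)/T$, so the average count per center $c$ is of order $n/T^2$. Since the hypothesis $n\ge 32T^5k/\epsilon'^3$ forces this mean to be much larger than $k$, a Chernoff/Azuma tail bound on the random $c$ yields the desired $\epsilon'/T$ failure probability. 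The principal obstacle is exactly this double count: the $y$-partition $\{J_l\}$ is induced by the ranks of $\pi$ at the sampled indices, so the $y$-constraint on $c$ couples to the permutation $\pi$ in a subtle way, and one must carefully rule out pathological sample configurations in which many choices of $c$ produce a sparse box $B_j(c)$.
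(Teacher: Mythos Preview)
Your proposal has a genuine gap, and you have correctly located it yourself: the ``double count over the random center $c$'' followed by a ``Chernoff/Azuma tail bound'' is not carried out, and it is not clear that it can be. After conditioning on $S\setminus\{i_j\}$, the box $B_j(c)$ depends on $c$ through both coordinates: the $x$-interval $I_j$ shifts with $c$, and the $y$-interval $J_{\pi'(j)}$ is the Voronoi cell of $\pi(c)$ among the other sample $y$-values, so it jumps discontinuously as $c$ moves. The quantity $|B_j(c)\cap\pi|$ is therefore a complicated deterministic function of $c$ (through $\pi$), not a sum of independent or bounded-difference increments, so neither Chernoff nor Azuma applies in any obvious way. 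Even the averaging step is not substantiated: you assert $\sum_c|B_j(c)\cap\pi|\gtrsim n(b-a)/T$, but for a point $q$ to lie in $B_j(c)$ you need both $c\in[2q_x-b,2q_x-a]$ and $\pi(c)$ to be the nearest sample $y$-value to $q_y$; the second event is governed by $\pi$ restricted to the interval $[2q_x-b,2q_x-a]$, and nothing prevents $\pi$ from mapping that interval far from $q_y$. In short, the sample-dependent boxes couple to $\pi$ in exactly the way you fear, and the proposal does not show how to control this.

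The paper sidesteps all of this by using a \emph{fixed} grid rather than sample-dependent boxes. Partition $[0,1]^2$ into a $t\times t$ grid with $t=4T^2/\epsilon'$, and call a cell \emph{dense} if it contains at least $\delta n$ points of $\pi$, where $\delta=\epsilon'^3/(32T^5)$. Two elementary union bounds then finish the argument: the probability that two of the $T$ sample points share a grid row or column is at most $2T(T-1)/t\le\epsilon'/2$, and the probability that some sample point lands in a sparse cell is at most $T\cdot t^2\delta\le\epsilon'/2$. On the complementary good event, the $T$ sample points sit in $T$ dense cells in distinct rows and columns; since $n\ge 32T^5k/\epsilon'^3$ gives $\delta n\ge k$, picking $k$ points of $\pi$ from each of these cells yields the desired $k$-blow-up of $\pi'$. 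The crucial difference from your approach is that density is a property of the grid cell determined by $\pi$ alone, decoupled from the random sample; this removes the need for any concentration argument beyond two one-line union bounds.
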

\begin{proof}
We consider the permutation as a collection of points represented in the unit square $[0,1]^2$.  Let $t = \frac{4T^2}{\epsilon'}$. We equally partition $[0,1]^2$ into $t$ columns and $t$ rows, thus each of the $t\times t$ squares is of size $1/t \times 1/t$. We say a square is dense if it contains at least $\delta n $ points of $\pi$, where $\delta = \frac{\epsilon'^3}{32T^5}$. Otherwise we say the square is sparse. 

For the samples of size $T$, the probability that those samples are in different columns is $t(t-1)\dots (t-T+1) / t^T$, where the nominator is the number of ways of assigning the $T$ samples to columns if they are in different columns, and the denominator is an upper bound of the total number of ways which columns these $T$ samples can be in. Therefore, the probability that at least two samples are in the same column is at most
$ 1 - \frac{t(t-1)\dots (t-T+1)}{ t^T}  \leq 1 - \left( \frac{t-T+1}{t} \right)^T \leq \frac{T(T-1)}{t}.$
Similarly, the probability that at least two samples are in the same row is at most  $\frac{T(T-1)}{t}$. Thus, by the union bound, the probability that there are two samples in the same column or row is at most $\frac{2T(T-1)}{t}$.

For a sample point, the probability that it lies in a sparse square is at most $\delta t^2$ since each sparse square has at most $\delta n$ points in $\pi$ and there are at most $t^2$ sparse regions. Thus by the union bound, the probability that at least one of the samples is in the sparse region is at most $\delta  t^2 T$. 

Combining the results above, the probability that the $T$ samples are in different columns and different rows, and all the samples are in dense squares is at least 
\[ 1 - \frac{2T(T-1)}{t} - \delta T t^2 \geq 1- \epsilon'.\] 
The inequality holds by our choice of $\delta$ and $t$.
We thus now assume the $T$ sample points are in different dense squares and they are in different rows and columns. However, in each of the dense squares that the samples are in, there are at least $\delta n \geq k$ points of $\pi$ in that square.  By picking $k$ points of $\pi$ from each of these squares give us a $k$-blow-up of the $T$-sample $\pi'$. Therefore a $k$-blow-up of $\pi'$ is in $\pi$. 
\end{proof}

With Lemma \ref{typicalsample}
we can prove the first assertion of Theorem \ref{newm}. 
\begin{proof}[Proof of the first assertion in Theorem \ref{newm}]
Recall the definition of the blow-up parameter $k^*(\cdot)$ discussed in Definition \ref{def:blowupp}. 

Let $q= k^*(M)$, which is finite. Let $\pi'$ be the $M$-subpermutation of $\pi$ picked uniformly at random. 
Applying Lemma \ref{typicalsample} with $k = q, \epsilon' = \epsilon, T = M$, we know as long as $n \geq 32M^5 q / \epsilon^3$,
 with probability at least $1-\epsilon$, the $M$-sample $\pi'$ has a $q$-blow-up in $\pi$. Since $\pi \in \PP$ and $\PP$ is hereditary, this $q$-blow-up of $\pi'$, which is a subpermutation of $\pi$, is in $\PP$. 
However, if our algorithm outputs ``$\pi$ is not in $\PP$", it means there exists a least  positive integer $\ell$ such that no $\ell$-blow-up of $\pi'$ is in $\PP$. By our definition of $k^*(M)$, we have $l \leq k^*(M) = q$. But we have just found a $q$-blow-up of $\pi'$ in $\PP$, which contradicts $\PP$ is hereditary.
\end{proof}

\subsection{When $\pi$ is $\epsilon$-far from satisfying $\PP$}
We now prove the second part of Theorem \ref{newm}. We want to show that if $\pn(\pi, \pi_0) \geq \epsilon$ for all $\pi_0 \in \PP$, then with large probability, our algorithm will output ``$\pi$ is not in $\PP$".

\paragraph{Proof outline.}
The proof idea is as follows. 

We view the permutation $\pi$ as a collection of points in $[0,1]^2$. We carry out a ``partition procedure," later described as Algorithm \ref{partitionnm}, to partition $[0,1]^2$ into dyadic squares (of possibly different sizes), where some squares are dense (i.e., informally containing many points from $\pi$) and some squares are sparse (i.e., informally containing only few points from $\pi$). With large probability, the $M$-sample will hit each of the dense squares (Lemma \ref{densesample}). 

Roughly speaking, if there are many squares that are dense, then $\pi$ contains many copies of $\sigma$, the smallest permutation not in $\PP$. Thus, with large probability, the $M$-sample will contain a copy of $\sigma$. This is proved in Lemma \ref{manyf}. Hence, our algorithm will output ``$\pi$ is not in $\PP$". 

Therefore, we can restrict our attention to the case where not many squares are dense. In this case, we show that with large probability, the $M$-sample $\pi'$ contains a subpermutation $\alpha$ which has a blow-up $\ta$ that is close to $\pi$. In fact, $\pn(\pi, \ta) < \epsilon/2$ (Lemma \ref{pita}). If our algorithm outputs ``$\pi$ is in $\PP$", then for any $k$, there is some $k$-blow-up of $\pi'$ in $\PP$. Since $\PP$ is hereditary, there is some blow-up of $\alpha$, call it $\ta'$, which has the same block sizes as $\ta$, and $\ta' \in \PP$. But $\pn(\ta, \ta')<\epsilon/2$ (Lemma \ref{tata'}). Thus $\pi$ is within distance $\epsilon$ from $\ta' \in \PP$ (Lemma \ref{closeto}). This contradicts the fact that $\pi$ is $\epsilon$-far from $\PP$ under $\pn$.

We first partition $[0,1]^2$ into squares of different types depending on the number of points of the permutation in the square.
\paragraph{Dyadic Partition Procedure. }
This is done through a dynamic procedure in different levels, starting from level 0. Throughout the procedure, we maintain and update three sets of squares, called $\Active$, $\Mature$, and $\Frozen$, which roughly correspond to squares which are dense, modestly dense, and sparse, respectively. 
From now on, let 
\beq \delta =\frac{\epsilon }{512 c(\sigma) \log(32/\epsilon)}  \label{sec3delta} \eeq
 be the density parameter for a square. Here density means the fraction of the points of $\pi$ that is in this square. (Notice here we do not normalize the number of points of $\pi$ by the area of the square). A square in $\Active$ means it has density at least $\delta$ (thus dense); a square in $\Frozen$ means it has density less than $\delta$ (thus sparse). We will specify $\Mature$ soon. 

Figure \ref{fig:partitionp} illustrates this procedure and should be helpful for the reader in understanding it.

To initialize the procedure, which begins at level 0, we put $[0,1]^2$ into the set $\Active$ since the density is $1 > \delta$. Initially, the sets $\Mature$ and $\Frozen$ are empty. 

In level $i$, for $i \geq 1$, we consider squares at level $i$, working only with squares in $\Active$ which are of side length $1/2^{i-1}$. This procedure will maintain that at the beginning of level $i$, all the squares in $\Active$ are of side length $1/2^{i-1}$; and at the end of level $i$, all squares in $\Active$ are of side length $1/2^i$.  We remove a square $R$ of side length $1/2^{i-1}$ from $\Active$, and subdivide it into four \emph{children squares} $R_1, R_2, R_3, R_4$, each having side length $1/2^i$. We say $R$ is the \emph{parent square} of $R_i, i=1,2,3,4$. For any of those four squares, it is added to $\Active$ if it has density at least $\delta$, and is added to $\Frozen$ if otherwise. Furthermore, if all those four squares are added to $\Frozen$, then we add $R$ itself to $\Mature$. This is roughly saying that $R$ is dense, since it once was in $\Active$; but it is not very dense, since it has density at most $4 \delta$ as all of its four children squares are added to $\Frozen$. After processing all the squares in $\Active$ of side length $1/2^{i-1}$, we move to the next level.

This procedure stops when $\Active$ is empty, or after level $\lceil  \log(64/\epsilon) \rceil$. This means the smallest possible squares in the three sets have side length at least 
\beq d = 2^{-\lceil  \log(64/\epsilon) \rceil}.  \label{sec3d} \eeq
We have $\epsilon/128 < d \leq \epsilon/64$.

In summary, the procedure is to subdivide parent squares which are dense into children squares, and subdivide its children squares further only if some children squares are still dense. We never subdivide squares in $\Frozen$. Eventually we obtain a partition of $[0,1]^2$ by squares in $\Active$ and $\Frozen$. 
Notice that if a square is labeled as frozen, then its parent square is either mature or once active. 

This procedure is also presented in the pseudocode below. 
\begin{algorithm}[H]
\caption{Dyadic Partition Procedure}\label{partitionnm}
\begin{algorithmic}[1]
\BState{\textbf{Input: }The $[0,1]^2$ representation of $\pi$.}
\BState{\textbf{Output: } Three sets of squares $\Active, \Frozen, \Mature$.}
\BState{$\Active = \{ [0,1]^2\}.$}
\BState{$\Mature = \emptyset.$}
\BState{$\Frozen = \emptyset.$}
\Procedure{Partition $[0,1]^2$}{}
\For {Levels $i = 1, 2, \dots, \log(1/d)$} 
\If {$\Active = \emptyset$}
\State{\textbf{Stop}}
\Else
\For {all squares $R$ in $\Active$ with side length $1/2^{i-1}$}
\State{$\Active = \Active \setminus \{R\}$.} 
\State{Split $R$ into four equal sized squares $R_1, \dots, R_4$, each of side length $1/2^i$.}
\If{None of $R_1, R_2, R_3, R_4$ has density at least $\delta$}
\State{$\Frozen = \Frozen \cup \{R_1, R_2, R_3, R_4\}.$}
\State{$\Mature = \Mature \cup\{ R\}$.} 
\Else
\State{$\Active = \Active \cup \{R_i, i = 1,2,3,4 \text{ which has density at least $\delta$}\}$;}
\State{$\Frozen = \Frozen \cup \{R_i, i = 1,2,3,4 \text{ which has density less than $\delta$}\}$;}
\EndIf
\EndFor
\EndIf
\EndFor
\Return{$\Active, \Frozen, \Mature$.}
\EndProcedure
\end{algorithmic}
\end{algorithm}

\begin{figure}[h]
\centering
\caption{Illustration of the Dyadic Partition Procedure. \footnotesize{We provide an example of the first four levels of the partition procedure. The squares in $\Active$ resulted in each level are denoted by the black squares. At each level, the active squares in $\Active$ splits. The squares in $\Frozen$ at each level are denoted by the white squares with label ``F" (stands for ``Frozen"). The squares in $\Mature$ are given by the squares with the parallel lines filled-in and each such square consists of four frozen squares. In this example, $\Mature$ is empty in levels 0,1,2. After finishing the algorithm at level 3, there are two squares in $\Mature$.}}
\includegraphics[scale=0.7, trim={1.5cm 1.6cm 7cm 1.8cm},clip]{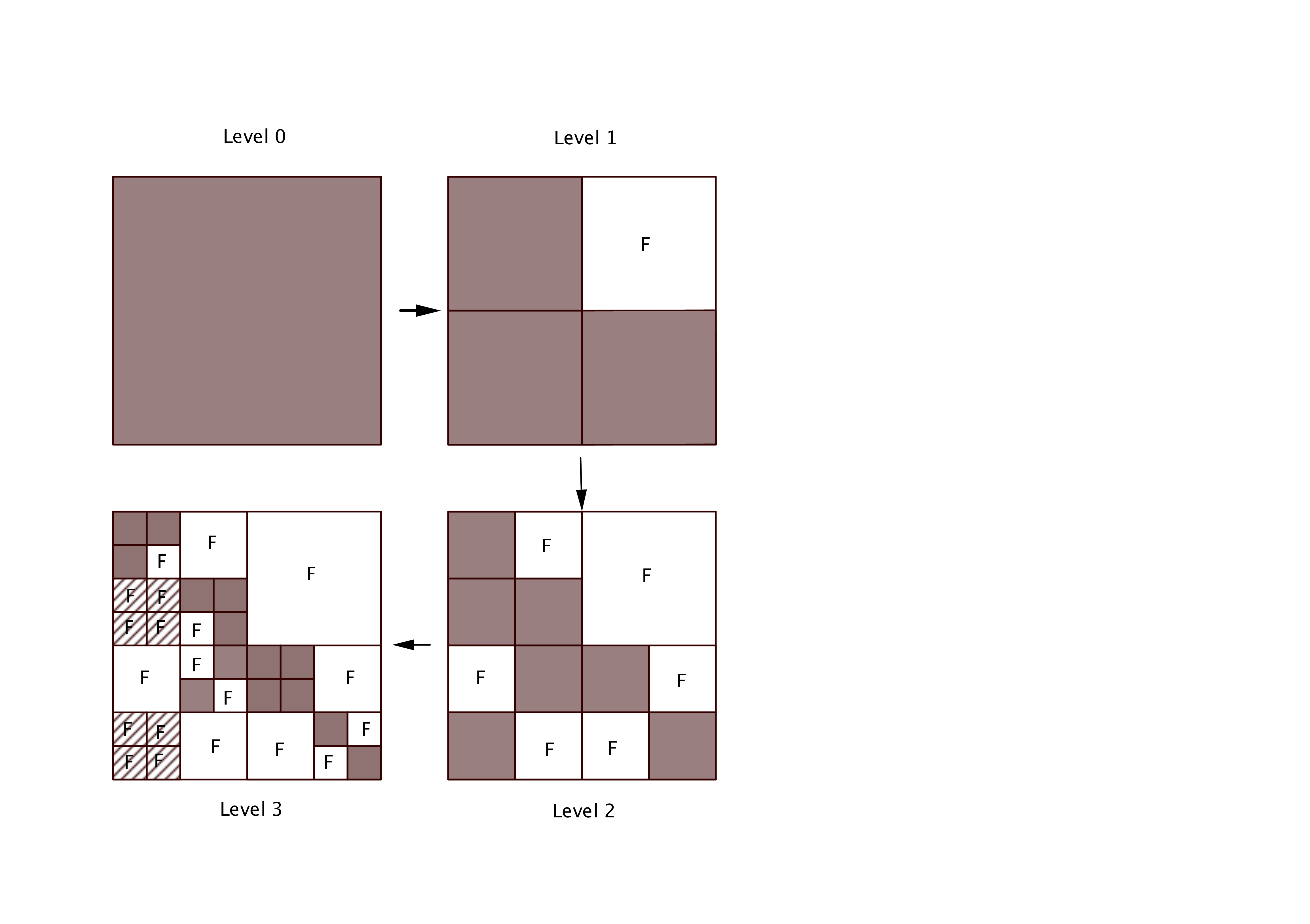}
\label{fig:partitionp}
\end{figure}

Based on this procedure, we obtain the following simple observations. 
\begin{lem}\label{procedurec}
\begin{enumerate}[(a)]
\item The squares in $\Active, \Frozen$ form a partition of $[0,1]^2$.
\item Each square in $\Active$ is disjoint from each square in $\Mature$. 
\item At the end of each level, all squares in $\Active$ have the same side length. If $\Active \neq \emptyset$ when the procedure stops, then all squares in $\Active$ have side length $d$.
\item If $R \in \Frozen$, then the parent square of $R$ has density at least $\delta$. 
\item If $R \in \Mature$, then its density is at most $4\delta$; if $R \in \Active$, then its density at most $d$.
\end{enumerate}
\end{lem}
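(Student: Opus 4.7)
All five parts are bookkeeping consequences of the procedure's specification, and the plan is to verify them by a short induction on the level $i$. Throughout I will use the invariant maintained by the algorithm: once a square enters $\Frozen$ or $\Mature$ it stays there, squares are placed into $\Active$ only when their density is at least $\delta$, and at the start of level $i+1$ every square in $\Active$ has side length $2^{-i}$.

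For (a), I will induct on the number of inner-loop iterations. Initially $\Active = \{[0,1]^2\}$ and $\Frozen = \emptyset$, so their union is $[0,1]^2$. Each iteration removes a square $R$ from $\Active$ and places its four children $R_1, \dots, R_4$ (which partition $R$) into $\Active$ or $\Frozen$; hence $\Active \cup \Frozen$ remains a partition of $[0,1]^2$. Part (c) is the same induction, keeping track that each $\Active$ square of side length $2^{-(i-1)}$ is replaced at level $i$ by children of side length $2^{-i}$; in particular, if the procedure halts after level $\lceil \log(64/\epsilon) \rceil$ with $\Active$ nonempty, the survivors have side length $d$.

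For (b), I will argue that when a square $R$ is moved to $\Mature$, its four children --- which partition $R$ --- are simultaneously placed in $\Frozen$. By (a) those children are disjoint from the current $\Active$, so $R$ itself is disjoint from the current $\Active$; since $\Frozen$ squares are never subdivided, no future $\Active$ square can lie inside $R$ either. For (d), every $\Frozen$ square $R$ is a child of some square $P$ that was in $\Active$ at the moment of the split. But any square ever placed into $\Active$ either is $[0,1]^2$ (density $1 \geq \delta$) or was added only because the density check $\geq \delta$ was passed, so $P$ has density at least $\delta$.

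For (e), the $\Mature$ bound follows since the four $\Frozen$ children of $R$ partition $R$ and each has density less than $\delta$, so the density of $R$ is less than $4\delta$. For $\Active$ squares surviving at termination, (c) gives side length $d$; since $\pi$ is a permutation, any horizontal interval of length $d$ contains at most $\lfloor dn \rfloor + 1$ of the $n$ column positions $\{1/n, \dots, 1\}$, each of which contributes at most one point of $\pi$ to the square, yielding density at most $d$ (the $1/n$ slack being absorbed for the very large $n$ the theorem considers). I do not anticipate any real obstacle: the whole lemma is a checklist of invariants of a deterministic partitioning algorithm, and the only place where one uses structure beyond the procedure itself is the last density bound, which relies on the single-point-per-column property of permutations.
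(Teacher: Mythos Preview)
Your proposal is correct and follows essentially the same approach as the paper. The paper's own proof is extremely terse---it simply declares (a), (b), (d) to be ``direct consequences of the procedure,'' gives a one-line argument for (c), and proves (e) exactly as you do (four sparse children for $\Mature$; side length $d$ plus the permutation property for $\Active$)---so your version is just a more explicit spelling-out of the same invariants.
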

\begin{proof}
(a), (b), (d) are direct consequences of the procedure.

To see (c), notice that if $R$ is active and has side length greater than $d$, then the procedure has not stopped yet; thus it should have been subdivided.

To see (e), notice that $R$ is in $\Mature$ only if none of its four children squares have density at least $\delta$. Therefore $R$ has density at most $4\delta$. If $R \in \Active$ when the procedure stops, then by (c) it has side length $d$, but since $\pi$ is a permutation, it means $R$ contains at most $dn$ points. 
\end{proof}

Let $\Rich$ be the set of squares at the end of the procedure containing many points, defined as $\Rich = \Active \cup \Mature$. By definition, any square in $\Rich$ has density at least $\delta$. 
In this way we have identified regions in $[0,1]^2$ which contain many points in $\pi$ (the squares in the set $\Rich$). We claim that with large probability, the $M$-sample of $\pi$ hits each of these dense regions.
\begin{lem}\label{densesample}
With probability at least $1- \epsilon$, every square $R \in \Rich$ contains a sample point.
\end{lem}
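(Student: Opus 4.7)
My plan is a direct union bound: I will show that $\Rich$ contains at most $1/\delta$ squares, each of which is hit by the $M$-sample with probability at least $1-e^{-\delta M}$, and then verify $(1/\delta)\,e^{-\delta M} \leq \epsilon$ for the chosen $M$ and $\delta$. The first step is to argue that the squares in $\Rich = \Active \cup \Mature$ are pairwise disjoint as regions in $[0,1]^2$. Since any two distinct dyadic squares are either nested or disjoint, only nesting must be ruled out. An Active-at-end square was never subdivided, so it cannot strictly contain any Rich square; conversely, a Mature square's only partition-descendants are its four Frozen children, and Frozen squares are never subdivided, so no Rich square can lie strictly inside a Mature square. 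Two Active-at-end squares share the same side length $d$ by Lemma~\ref{procedurec}(c) and hence cannot be nested either.

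Given this disjointness, and since every Rich square has density at least $\delta$ (Active by definition, and Mature because it was Active immediately before being split), the sum of densities over Rich squares is at most $1$, yielding $|\Rich| \leq 1/\delta$. For a single Rich square $R$ containing $n_R \geq \delta n$ points of $\pi$, the probability that $M$ uniformly chosen sample points (without replacement) all avoid $R$ is
\[
\frac{\binom{n-n_R}{M}}{\binom{n}{M}} \;\leq\; \left(1 - \frac{n_R}{n}\right)^M \;\leq\; (1-\delta)^M \;\leq\; e^{-\delta M}.
\]
A union bound over the at most $1/\delta$ Rich squares then shows that the probability some Rich square is missed is at most $(1/\delta)\,e^{-\delta M}$.

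It remains to substitute the chosen parameters and check $(1/\delta)\,e^{-\delta M} \leq \epsilon$. With $M = \frac{2000 c(\sigma)\log^2(2/\epsilon)}{\epsilon}$ and $\delta = \frac{\epsilon}{512 c(\sigma)\log(32/\epsilon)}$, the $c(\sigma)$ factors cancel in the product, giving $\delta M = \frac{2000 \log^2(2/\epsilon)}{512 \log(32/\epsilon)}$. Using $\log(32/\epsilon) \leq 5\log(2/\epsilon)$ for $\epsilon \leq 1$, this is at least $\tfrac{25}{32}\log(2/\epsilon)$, so $e^{-\delta M}$ decays polynomially in $\epsilon$, while $1/\delta$ grows only polynomially in $1/\epsilon$ (with a $c(\sigma)$ factor). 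The main obstacle is precisely this arithmetic bookkeeping: the constants $2000$ and $512$ in the definitions of $M$ and $\delta$ are tuned so that the exponential decay $e^{-\delta M}$ beats the inflated prefactor $1/\delta = 512 c(\sigma)\log(32/\epsilon)/\epsilon$ with a margin of $\epsilon$ to spare. The structural disjointness argument and the union bound itself are routine, but verifying these constants requires carefully tracking where the logarithmic factors in $M$ come from.
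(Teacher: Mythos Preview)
Your structural arguments are correct and match the paper's approach: the disjointness of the squares in $\Rich$, the fact that each has density at least $\delta$, and the single-square miss probability bound $(1-\delta)^M \leq e^{-\delta M}$ are all fine. The divergence is in how you bound $|\Rich|$, and this is where a genuine gap appears.

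You bound $|\Rich| \leq 1/\delta$ via total density, whereas the paper bounds $|\Rich| \leq d^{-2}$ via area (every rich square has side length at least $d$). Your bound is valid, but it carries a factor of $c(\sigma)$: indeed $1/\delta = 512\,c(\sigma)\log(32/\epsilon)/\epsilon$. The crucial point is that in the product $\delta M$ the $c(\sigma)$ factors cancel, so the exponent $\delta M = \tfrac{2000\log^2(2/\epsilon)}{512\log(32/\epsilon)}$ is a function of $\epsilon$ alone. Consequently, for fixed $\epsilon$ your union bound
\[
\frac{1}{\delta}\,e^{-\delta M} \;=\; c(\sigma)\cdot\Bigl[\tfrac{512\log(32/\epsilon)}{\epsilon}\,e^{-\delta M}\Bigr]
\]
scales \emph{linearly} in $c(\sigma)$ while the bracketed factor is fixed. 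Since $c(\sigma)$ can be arbitrarily large (it is $2^{\Theta(|\sigma|)}$ in general), the inequality $(1/\delta)e^{-\delta M} \leq \epsilon$ cannot hold uniformly in $\sigma$; for any given $\epsilon\in(0,1]$ it fails once $c(\sigma)$ is large enough. Your assertion that the constants $2000$ and $512$ are ``tuned'' to make this work is therefore incorrect --- they are tuned for the prefactor $d^{-2}$, which depends only on $\epsilon$, not for $1/\delta$.

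The fix is immediate: replace your density-based count by the paper's area-based count $|\Rich| \leq d^{-2}$ (every rich square has side length at least $d$ and they are pairwise disjoint in $[0,1]^2$). Then the union bound becomes $d^{-2}e^{-\delta M}$, and the required inequality $M > -\ln(\epsilon d^2)/\delta$ involves no $c(\sigma)$ on the target side, so the $c(\sigma)$ in $M$ is free to do its job of making $\delta M$ large enough.
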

\begin{proof}
There are at most $(1/d)^2 = d^{-2}$ squares in $\Rich$ since each square has side length at least $d$ and the squares in $\Rich$ are disjoint. 
For each $R \in \Rich$, it contains at least $\delta n$ points of $\pi$. The probability that none of the $M$ samples are in $R$ is bounded above by $(1- \delta)^M$. Thus, by the union bound, the probability that there is a dense square in $\Rich$ which is not hit by any of the $M$ samples is bounded above by 
\[ d^{-2} (1-\delta)^M \leq d^{-2} e^{-\delta M}.\]
However, recall $M = \frac{2000 c(\sigma)\log^2(2/\epsilon)}{\epsilon}$, and the values of $\delta, d$ are determined in (\ref{sec3delta}) and (\ref{sec3d}); by a simple computation,  $d^{-2} e^{-\delta M} < \epsilon$ since $M >- \ln(\epsilon d^2) / \delta$ by our choices of $M, d,\delta$.
\end{proof}

Thus we know with probability at least $1-\epsilon$, each square in $\Rich$ contains a sample point. We pick one sample point from each of the squares in $\Rich$ (if there are multiple samples in  one square, we arbitrarily pick one). These samples of $\pi$ induce a permutation $\alpha$ of length $|\Rich|$. 
For any $1 \leq i \leq |\alpha|$, define $j_i$ to be such that the $i$-th element (from left to right) of $\alpha$ corresponds to the sample $(j_i, \pi(j_i))$ of $\pi$. Thus $1\leq j_1 < \dots < j_{|\alpha|} \leq n$. Furthermore, $\alpha(i) > \alpha(i')$ if and only if $\pi(j_i) > \pi(j_{i'})$. Figure \ref{fig:phi_psi}(a) shows an example of the $M$-sample in $\pi$, and Figure \ref{fig:phi_psi}(b) shows $\alpha$; in this example, $\alpha = 4213$. 

\begin{figure}[h]
\centering
\caption{Obtaining a blow-up of a subpermutation of the $M$-sample that is close to $\pi$. \footnotesize{The black and red points are points in $\pi$. In this example $|\pi| = 20$. 
In Figures \ref{fig:phi_psi}(a),(b),(c), the squares are the results of the partition procedure. The squares in gray are the squares in $\Active$;  The squares in white and the four smaller squares with stripes are in $\Frozen$; The larger striped square, which consists of four smaller striped squares, is in $\Mature$. In this example $|\Active| = 3, |\Frozen| = 13, |\Mature| = 1$. Figure \ref{fig:phi_psi}(a): the red crosses are the $M$ sample points; each region in $\Rich$ contains a sample point. Figure \ref{fig:phi_psi}(b): The permutation $\alpha$ consists of one sample point from each square in $\Rich$, given by the red `+`; if one square in $\Rich$ contains multiple sample points, arbitrarily pick one. We ignore the sample points not contained in squares in $\Rich$.  In this example, the permutation $\alpha$ induced by the subset of sample points is $4213$. Figure \ref{fig:phi_psi}(c): Map $\phi$ maps each point in $\pi$ to a point in $\alpha$. The dashed line connects the point of $\alpha$ that a point of $\pi$ is mapped to. A point is mapped to a point of $\alpha$ sharing the same parent square. Figure \ref{fig:phi_psi}(d): The permutation $\ta$. Notice that in the $t$-th blow-up block, the subpermutation is the one induced by the points in $\pi$ which are mapped to the $t$-th element of $\alpha$. 
}}
\includegraphics[scale=0.9]{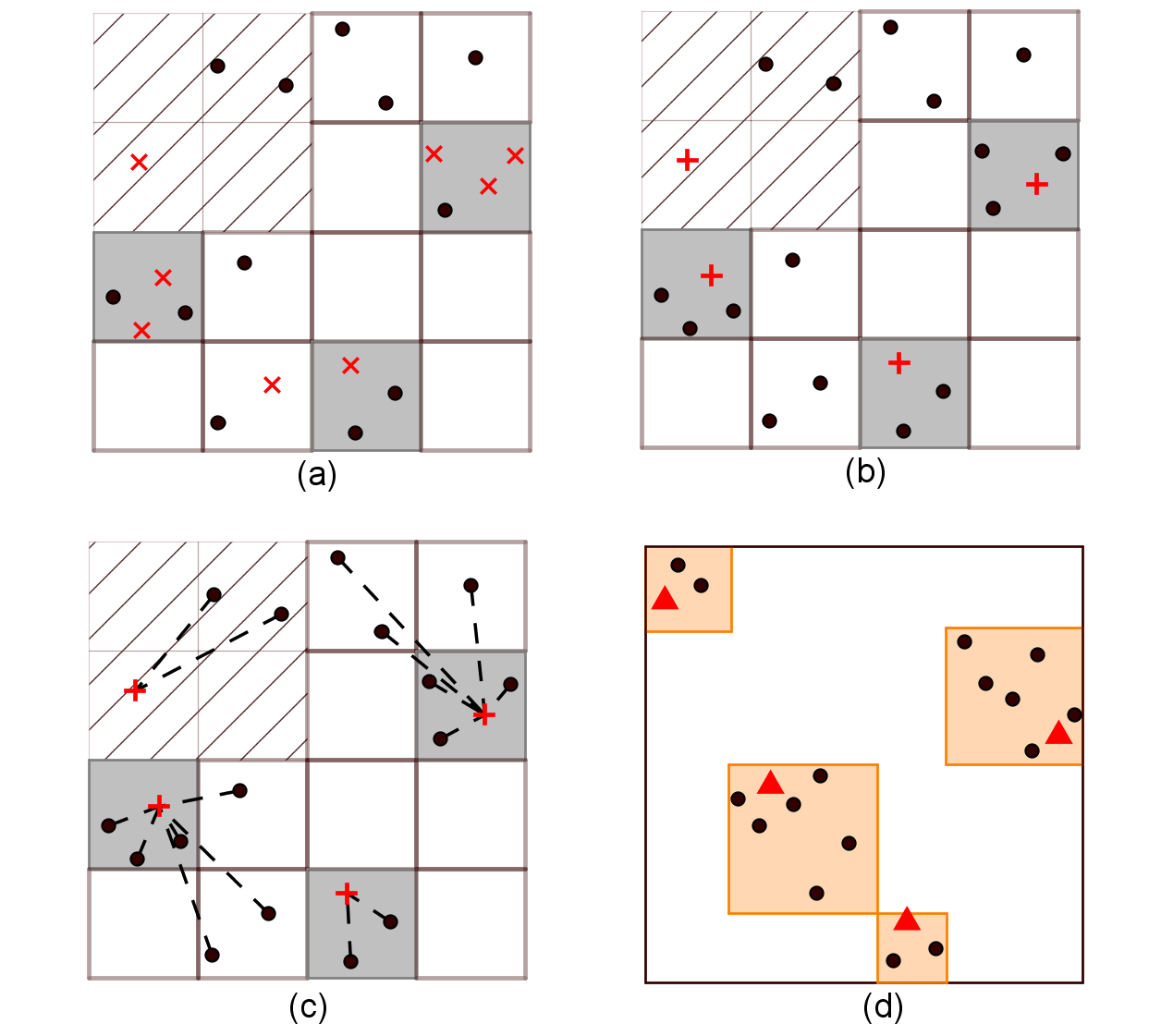}\label{fig:phi_psi}
\end{figure}

By Lemma \ref{procedurec}, the parent of each frozen square was once active or mature. If there are many frozen squares of the same size, then we get many parent squares of the same size which are dense. 
The next lemma shows that if this is the case, our algorithm will output ``$\pi$ is not in $\PP$" with large probability. This lemma implicitly uses the Marcus-Tardos theorem \cite{MT} discussed in the introduction, which verifies the existence of the F\"uredi-Hajnal constant. 
\begin{lem}\label{manyf}
Let $C = 2c(\sigma)$ where $c(\sigma)$ is the F\"uredi-Hajnal constant.
If there is a positive integer $j$ such that there are at least $4C \cdot 2^{j-1}$ squares in $\Frozen$ with side length $1/2^j$, then with probability at least $1 -\epsilon$, our algorithm will output ``$\pi$ is not in $\PP$".
\end{lem}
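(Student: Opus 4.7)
The plan is to use the frozen squares of a common size to extract many dense dyadic parent squares, apply the F\"uredi--Hajnal/Marcus--Tardos theorem to find $\sigma$ realized across these parents, and then union-bound over $s=|\sigma|$ squares to show the $M$-sample catches one copy of $\sigma$.

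First I would pass from frozen children to dense parents. By Lemma~\ref{procedurec}(d), every frozen square of side length $1/2^j$ has a parent dyadic square of side length $1/2^{j-1}$ of density at least $\delta$. Since a parent has at most four children, the number of distinct such parents is at least $(4C\cdot 2^{j-1})/4 = 2c(\sigma)\cdot 2^{j-1}$, and they all lie on the $2^{j-1}\times 2^{j-1}$ grid of dyadic squares at level $j-1$. Form the $2^{j-1}\times 2^{j-1}$ $0/1$ matrix whose ones mark these dense parents; its number of ones exceeds $c(\sigma)\cdot 2^{j-1}$, so by the F\"uredi--Hajnal inequality $\mathrm{ex}(2^{j-1},\sigma)\leq c(\sigma)\cdot 2^{j-1}$ (Marcus--Tardos), the matrix contains the permutation matrix of $\sigma$. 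This yields $s=|\sigma|$ dense parent squares $R_1,\dots,R_s$ whose horizontal projections are pairwise disjoint, whose vertical projections are pairwise disjoint, and whose grid positions realize the pattern $\sigma$.

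Next I would perform a union bound over the $s$ selected squares. Each $R_i$ contains at least $\delta n$ points of $\pi$, so the probability that some $R_i$ is missed by the uniform $M$-sample $\pi'$ is at most
\[
s\,(1-\delta)^M \;\leq\; s\,e^{-\delta M}.
\]
Plugging in $M=2000c(\sigma)\log^2(2/\epsilon)/\epsilon$ and $\delta=\epsilon/(512c(\sigma)\log(32/\epsilon))$ gives $\delta M \gtrsim \log(2/\epsilon)$ with a large constant, and using $s\leq M$ and a short calculation one checks this bound is at most $\epsilon$ for all $0<\epsilon\leq 1$.

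On the complementary event, the sample $\pi'$ contains one point from each $R_i$; because the $R_i$'s have disjoint horizontal and vertical projections arranged in the pattern $\sigma$, these chosen points form a copy of $\sigma$ inside $\pi'$. Every blow-up of $\pi'$ therefore also contains $\sigma$, and since $\sigma\notin\PP$ and $\PP$ is hereditary, no $k$-blow-up of $\pi'$ (and in particular no $k^*(M)$-blow-up) can lie in $\PP$. By its definition, the tester then outputs ``$\pi$ is not in $\PP$.'' The only nontrivial step is the parameter-chasing in the union bound, but this is routine once one sees the clean cancellation $\delta M \gtrsim \log(2/\epsilon)$; the conceptual content of the lemma is entirely in Steps 1 and 2, where the Marcus--Tardos theorem converts ``many frozen squares at one scale'' into a certified planar copy of $\sigma$.
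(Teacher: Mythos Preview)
Your proposal is correct and follows essentially the same approach as the paper: pass from frozen children to their dense parents, apply Marcus--Tardos to the resulting $0/1$ matrix to locate an $s$-tuple of parent squares in the pattern $\sigma$, and then argue the sample hits all $s$ of them so that $\pi'$ contains $\sigma$.

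The one point of divergence is in how the ``sample hits all $s$ squares'' step is justified. You do a fresh union bound $s(1-\delta)^M\leq s e^{-\delta M}$ over just the $s$ selected parent squares. The paper instead observes that each parent square in $\mathcal S$ (being once active) contains some square in $\Rich=\Active\cup\Mature$, and then simply invokes Lemma~\ref{densesample}, which already established that with probability at least $1-\epsilon$ \emph{every} rich square is hit. The paper's route has the advantage that the entire analysis of the second assertion of Theorem~\ref{newm}---both the ``many frozen squares'' case and the ``few frozen squares'' case---lives on a single high-probability event (that of Lemma~\ref{densesample}), so no additional probability needs to be spent here. Your direct bound is conceptually cleaner and avoids the small structural observation about rich subsquares, but it introduces a separate tail estimate whose constants you must check independently (and your use of $s\leq M$ is looser than necessary; bounding by $s$ itself, or better yet noting $s\leq d^{-2}$ so that Lemma~\ref{densesample}'s computation already covers you, is tighter).
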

\begin{proof}
Let the set of squares in $\Frozen$ with side length $1/2^j$ be $\mathcal{F}_j$. For each square in $\mathcal{F}_j$, its parent square is either in $\Mature$ or once was in $\Active$. Consider the set of all these parent squares; call the set $\mathcal{S}$. Thus $|\cF_j| \leq 4|\cS|$ since each parent square has four children squares. Thus $|\cS| \geq C \cdot 2^{j-1}$.

Now we consider $[0,1]^2$ as being partitioned into $(2^{j-1})^2$ regions where each region is a square with side length $1/2^{j-1}$. The parent squares in $\cS$ are some of those regions. Each region corresponding to a parent square in $\cS$ has density at least $\delta$. 
Recall $s = |\sigma|$ and $C =2c(\sigma)$. 
Since $|\cS| \geq C \cdot 2^{j-1}$, by the Marcus-Tardos Theorem, we can find $s$ regions corresponding to $s$ squares in $\cS$ that are in different columns and rows. Furthermore, the points of $\pi$ in these $s$ regions form a blow-up of $\sigma$.

Furthermore, if a square in $\cS$ is not in $\Mature$, then there must be a square in $\Mature \cup \Active$ which is a subset of this square. Thus
by Lemma \ref{densesample}, with probability at least $1-\epsilon$, these $s$ regions each contains a sample point. This means, with probability at least $1-\epsilon$, $\sigma$ is a subpermutation of the sampled $\pi'$. However, since $\sigma \notin \PP$, the 1-blow-up of $\pi'$ is not in $\PP$. Since each permutation has exactly one 1-blow-up, which is itself, Algorithm 1 will output ``$\pi$ is not in $\PP$".
\end{proof}

From the previous lemma, it suffices to only further consider the case where there are not many frozen squares in any level. Formally, for each $j$, $\Frozen$ has at most $4C \cdot 2^{j-1}$ squares of side length $2^{-j}$. We have already obtained $\alpha$, a subpermutation of $\pi'$. The definition of $\alpha$ is right after Lemma \ref{densesample}, and recall $\alpha$ is induced by the points $(j_t, \pi(j_t)), t = 1,2,\dots, |\alpha|$. We now want to construct a blow-up of $\alpha$, call it $\ta$, such that $\pn(\pi,\ta)$ is small.

To define a blow-up of a given permutation, it suffices to define the length and shape of the subpermutation within each of the blow-up blocks. In the case of $\ta$, there are in total $|\alpha|$ blow-up blocks. 
The high-level idea to create an $\ta$ with small distance $\pn(\pi, \ta)$ is as follows. Recall that to bound $\pn(\pi, \ta)$, we find a matching $\psi:[n] \to [n]$ telling us that the $i$-th point of $\pi$ is matched with the $\psi(i)$-th point in $\ta$.

We first define a map $\phi: [n] \to \{j_t: 1 \leq t \leq |\alpha|\}$ which associates to each point of $\pi$ one of the sampled points in $\pi$ that make up $\alpha$. This is shown in Figure \ref{fig:phi_psi}(c), which the dashed line indicates where a point of $\pi$ is mapped to. We will use the map $\phi$ to determine $\psi$ and $\ta$. The map $\phi$ will have the property that the sum of the (taxicab) distance between points in $\pi$ and the sampled points they are mapped to is small. 
Also, if the $i$-th point of $\pi$ is associated to the $t$-th point in $\alpha$ under $\phi$, then $\psi(i)$ is in the $t$-th blow-up block in $\ta$. Thus $\phi$ determines the size of each blow-up block, which for the $t$-th blow-up block is $|\phi^{-1}(j_t)|$. To finish the construction of $\ta$, simply let the subpermutation in the $t$-th blow-up block be  the permutation induced by the subpermutation of $\pi$ restricted to the positions in $\phi^{-1}(j_t)$. This is shown in Figure \ref{fig:phi_psi}(d). To complete the construction of $\psi$, if $\psi(i), \psi(i')$ are mapped to the same blow-up block, let $\psi(i) < \psi(i')$ if and only if $i < i'$.
We will prove that the matching $\psi$ shows that the earth mover's distance $\pn(\pi, \ta)$ is small. 

We now go through the details of this argument. First we define $\phi$. 
Let $\phi: [n] \to \{j_i, i = 1, 2,\dots, |\alpha|\}$ be such that if $\phi(i) = j_t$, we associate $(i, \pi(i))$ by $\phi$ to the $t$-th point in $\alpha$. The association rule is as follows. 
\begin{enumerate}
\item If the $i$-th point of $\pi$ is in a square in $\Active$, and $(j_t, \pi(j_t)) \in \pi$ is the sample point in that square chosen to contribute to $\alpha$; we define $\phi(i) = j_t$.
\item If the $i$-th point of $\pi$ is in some sparse square in $\Frozen$, it means that its parent square was once active. If the parent square is in $\Mature$, then let $(j_t, \pi(j_t)) \in \pi$ be the unique point in that square chosen to be an element of $\alpha$ and define $\phi(i) = j_t$. If the parent square is not in $\Mature$, then after subdividing the parent square into four squares, one of them, $R$, is active. $R$ will further subdivide, and at the end of the partition procedure, either $R$ is mature, or it contains a square that is mature or active. Arbitrarily pick one such square that is mature or active, and let $(j_t, \pi(j_t))\in \pi$ be the unique element of $\alpha$ in that square and define $\phi(i) = j_t$.
\end{enumerate}
This association rule is illustrated in Figure \ref{fig:phi_psi}(c). 
We thus find for the $i$-th point in $\pi$ a target point $(\phi(t), \pi(\phi(t)))$ of $\alpha$.

For the point $(j_t, \pi(j_t)) \in \pi$ which gives the $t$-the element in $\alpha$, let $P_t = \phi^{-1}(j_t)$. Thus $P_t$ tells us which points in $\pi$ are mapped to the $t$-th point in $\alpha$. We create a blow-up $\tilde \alpha$ of $\alpha$ in the following way: the $t$-th point of $\alpha$ blows up into a block $B_t$, where $|B_t| = |\phi^{-1}(j_t)| = |P_t|$ and the permutation restricted to $B_t$ in $\ta$ is the subpermutation of $\pi$ restricted to the positions in $P_t$, i.e., $\{ (v, \pi(v)), v \in P_t \}$. By construction, $\ta$ is of length $n$. Figure \ref{fig:phi_psi}(c) and (d) illustrate how $\ta$ is obtained. 

Summarizing, we found a mapping of the points of $\pi$ to the points of a subpermutation $\alpha$. We naturally formed a blow-up $\ta$ of $\alpha$. We want to show $\pn(\pi,\ta)$ is small. Recall that by the definition of the earth mover's distance, it suffices to find a bijection between points in $\pi$ and points in $\ta$ for which the sum of the $L_1$ distances between the point in $\pi$ and the points in $\ta$ that they map to is small. 

We choose the bijection $\psi$ as follows. For any $i \neq j \leq n$, 
$\psi(i) < \psi(j)$ if and only if one of the following situation holds: (1) $\phi(i) < \phi(j)$, or (2) $\phi(i) = \phi(j)$ but $i<j$. 
Equivalently, for each $i$ (corresponding to the $i$-th point in $\pi$), let $(\psi(i), \ta(\psi(i)))$ lie in the block  $B_{\phi(i)}$. And if among all the points in $\phi^{-1}(\phi(i))$, $i$ is the $t$-th smallest, then $\psi(i)$ is the $t$-th point (from left to right) in $B_{\phi(i)}$. In other words, $\psi(i) = |B_1| + \dots + |B_{\phi(i)-1}| + t$. 

To show $\pn(\pi,\ta)$ is small, 
it suffices to show that $\sum_{i=1}^n d\left(\left(\frac{i}{n}, \frac{\pi(i)}{n}\right), \left(\frac{\psi(i)}{n}, \frac{\ta(\psi(i))}{n}\right)\right)$ is small, where  $d((x_1, y_1), (x_2, y_2)) = |x_1 - x_2| + |y_1 - y_2|$ is the $L_1$ distance. The next lemma shows that this is within a factor two of the sums of the $L_1$ distance between the points in $\pi$ and the points in $\alpha$ that they map to. This in turn is easier to bound as the $L_1$ distance between two points in the same axis-parallel square is at most twice the side length of the square.

\begin{lem}\label{sf1}
\[ \sum_{i=1}^n d\left(\left(\frac{i}{n}, \frac{\pi(i)}{n}\right), \left(\frac{\psi(i)}{n}, \frac{\ta(\psi(i))}{n}\right)\right) \leq 2\sum_{i=1}^n d\left(\left(\frac{i}{n}, \frac{\pi(i)}{n}\right), \left(\frac{\phi(i)}{n}, \frac{\pi(\phi(i))}{n}\right)\right).\]
\end{lem}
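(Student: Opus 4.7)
The natural strategy is to route each $p_i=(i/n,\pi(i)/n)$ to $q_i=(\psi(i)/n,\ta(\psi(i))/n)$ through the sample point $r_i=(\phi(i)/n,\pi(\phi(i))/n)$ via the triangle inequality for the $L_1$ metric $d$:
\[
d(p_i,q_i)\ \le\ d(p_i,r_i)+d(r_i,q_i).
\]
Summing, the desired bound $\sum_i d(p_i,q_i)\le 2\sum_i d(p_i,r_i)$ reduces to
\[
\sum_{i=1}^n d(r_i,q_i)\ \le\ \sum_{i=1}^n d(p_i,r_i).
\]
Splitting into $x$- and $y$-coordinates (and clearing the factor $1/n$), this in turn reduces to the two inequalities
\[
\sum_i|\phi(i)-\psi(i)|\ \le\ \sum_i|i-\phi(i)|\quad\text{and}\quad\sum_i|\pi(\phi(i))-\ta(\psi(i))|\ \le\ \sum_i|\pi(i)-\pi(\phi(i))|.
\]

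For the $x$-inequality I would use the $L_1$ rearrangement inequality: among all bijections $\sigma:[n]\to[n]$, the sum $\sum_k|k-\phi(\sigma(k))|$ is minimized when $(\phi(\sigma(k)))_k$ is non-decreasing. By construction $\psi(i)$ is exactly the lexicographic rank of $(\phi(i),i)$, so $\phi\circ\psi^{-1}$ is the non-decreasing rearrangement of $(\phi(i))_i$. After the substitution $k=\psi(i)$,
\[
\sum_i|\psi(i)-\phi(i)|\ =\ \sum_k|k-\phi(\psi^{-1}(k))|\ \le\ \sum_i|i-\phi(i)|.
\]

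The hard part is the $y$-inequality, since $\psi$ was set up to sort in the $x$-direction. The plan is to reparameterize by $y$-value: for $v\in[n]$ let $\tilde\phi(v)=\pi(\phi(\pi^{-1}(v)))$ and $\tilde\psi(v)=\ta(\psi(\pi^{-1}(v)))$, so that $\tilde\psi$ is a bijection of $[n]$ and the inequality becomes $\sum_v|\tilde\psi(v)-\tilde\phi(v)|\le\sum_v|v-\tilde\phi(v)|$. The crucial step is verifying that $\tilde\psi(v)$ is itself the lexicographic rank of the pair $(\tilde\phi(v),v)$, for then the identical rearrangement argument applies. This monotonicity is exactly what the construction of the blow-up $\ta$ was designed to guarantee: for $v,v'$ whose preimages $\pi^{-1}(v),\pi^{-1}(v')$ lie in distinct blocks $t\ne t'$, the $y$-ranges of $B_t$ and $B_{t'}$ in $\ta$ are laid out in the order dictated by $\alpha$, which is precisely the order of the sample $y$-values $\pi(j_t),\pi(j_{t'})$, i.e.\ the values of $\tilde\phi$; for $v,v'$ whose preimages lie in the same block $t$, the $y$-coordinates of $\ta$ on $B_t$ faithfully reproduce the relative order of the $\pi$-values on $P_t$, which is the order of $v$ and $v'$ themselves. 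Checking these two cases is the only non-trivial piece; once it is in place, combining the two rearrangement bounds with the triangle-inequality reduction completes the proof.
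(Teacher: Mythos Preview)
Your proof is correct but takes a genuinely different route from the paper's. The paper does not use the triangle inequality; instead it splits directly into the two coordinate inequalities
\[
\sum_i |i-\psi(i)|\le 2\sum_i|i-\phi(i)|,\qquad \sum_i|\pi(i)-\ta(\psi(i))|\le 2\sum_i|\pi(i)-\pi(\phi(i))|,
\]
and proves each via an auxiliary claim (``Generalized Spearman's Footrule''): if $\beta$ is the lexicographic-rank bijection of a map $\gamma:[n]\to[n]$, then $\sum_i|i-\beta(i)|\le 2\sum_i|i-\gamma(i)|$. That claim is established by a cut-counting argument, writing $\sum_i|i-\gamma(i)|=\sum_j(\gamma_j^+ +\gamma_j^-)$ and showing $\beta_j^{\pm}\le\gamma_j^{+}+\gamma_j^{-}$ for each threshold $j$.

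Your approach instead uses the triangle inequality to reduce to $\sum_i|\phi(i)-\psi(i)|\le\sum_i|i-\phi(i)|$ (and its $y$-analogue), which you then obtain from the $L_1$ rearrangement inequality. This is more elementary: the rearrangement step is a textbook swap argument, and in fact your inequality immediately implies the paper's claim via the triangle inequality. Both proofs need the same verification for the $y$-coordinate---that $\ta\circ\psi\circ\pi^{-1}$ is the lexicographic-rank bijection of $\pi\circ\phi\circ\pi^{-1}$---and your two-case check (different blocks governed by $\alpha$, same block governed by the copied subpattern of $\pi$) is exactly what is needed. The paper's cut-counting claim has some independent interest as a standalone inequality, but your route is shorter and avoids proving a separate lemma.
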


We first prove the following useful claim. 
\begin{claim}[Generalized Spearman's Footrule]
Let $\gamma(i): [n] \to [n]$ be any map. 
Define a bijection $\beta:[n] \rightarrow [n]$ as follows. We have $\beta(i) < \beta(j)$ if and only if (1) $\gamma(i) < \gamma(j)$ or (2) $\gamma(i) = \gamma(j)$ and $i<j$.

Let $\D(\beta) = \sum_{i=1}^n |i-\beta(i)|$, which is Spearman's footrule distance between the identity permutation and the permutation $\beta$. Let $\D(\gamma) = \sum_{i=1}^n |i - \gamma(i)|$, which is a footrule distance between the identity permutation and the mapping $\gamma$.
Then
\[ \D(\beta)  \leq 2 \D(\gamma).\]
\end{claim}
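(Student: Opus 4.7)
The plan is to pass to the ordering that $\beta$ induces on $[n]$ and then combine the triangle inequality with the $L_1$ rearrangement inequality. Let $\sigma$ be the permutation of $[n]$ which lists the indices in the order prescribed by $\beta$, that is, $\beta(\sigma(k)) = k$ for every $k$ (so $\sigma = \beta^{-1}$), and set $a_k := \gamma(\sigma(k))$. By the tie-breaking rule defining $\beta$, the sequence $(\gamma(\sigma(k)), \sigma(k))$ is non-decreasing in $k$ under lexicographic order; in particular $a_1 \leq a_2 \leq \cdots \leq a_n$ is precisely the multiset of $\gamma$-values listed in non-decreasing order.

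Changing variables via $i = \sigma(k)$ in each of the two sums we wish to compare gives
\[ D(\beta) = \sum_{i=1}^n |i - \beta(i)| = \sum_{k=1}^n |\sigma(k) - k|, \qquad D(\gamma) = \sum_{i=1}^n |i - \gamma(i)| = \sum_{k=1}^n |\sigma(k) - a_k|. \]
Apply the triangle inequality termwise and sum:
\[ D(\beta) = \sum_{k=1}^n |\sigma(k) - k| \leq \sum_{k=1}^n |\sigma(k) - a_k| + \sum_{k=1}^n |a_k - k| = D(\gamma) + \sum_{k=1}^n |a_k - k|. \]
Thus it suffices to prove the inequality $\sum_{k=1}^n |a_k - k| \leq D(\gamma) = \sum_{k=1}^n |\sigma(k) - a_k|$.

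This last inequality is an instance of the $L_1$ rearrangement inequality: for a fixed non-decreasing sequence $a_1 \leq \cdots \leq a_n$ and any permutation $b$ of a given multiset, the quantity $\sum_k |a_k - b_k|$ is minimized when $b$ is also listed in non-decreasing order. Applying this with $b_k = \sigma(k)$, a permutation of the set $\{1, 2, \ldots, n\}$ whose sorted listing is $(1, 2, \ldots, n)$, yields $\sum_k |a_k - k| \leq \sum_k |a_k - \sigma(k)| = D(\gamma)$. Plugging this into the previous display gives $D(\beta) \leq 2 D(\gamma)$, as desired.

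I do not expect substantive obstacles. The rearrangement step is a routine exchange argument: whenever $i < j$ with $b_i > b_j$, swapping $b_i$ and $b_j$ cannot increase the sum, since for $x \leq y$ the function $a \mapsto |a - x| - |a - y|$ is non-decreasing in $a$. The only mild subtlety is that $\gamma$ need not be injective, so the $a_k$ are merely non-decreasing (not strictly so); but the rearrangement inequality holds verbatim for multisets, so this causes no trouble. The main conceptual insight is simply that $\beta$ and $\gamma$ differ by the "sorting" map, whose cost is controlled by $D(\gamma)$ itself.
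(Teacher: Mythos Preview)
Your proof is correct and takes a genuinely different route from the paper's. The paper argues combinatorially via ``level crossings'': for each threshold $j$ it counts the indices $i$ with $i \le j < \gamma(i)$ (call it $\gamma_j^+$) and with $\gamma(i) \le j < i$ (call it $\gamma_j^-$), observes by double counting that $D(\gamma)=\sum_j(\gamma_j^+ + \gamma_j^-)$ (and similarly for $\beta$), and then shows $\beta_j^+ \le \gamma_j^+ + \gamma_j^-$ and $\beta_j^- \le \gamma_j^+ + \gamma_j^-$ for every $j$ by a matching argument, which after summing gives $D(\beta)\le 2D(\gamma)$. Your argument instead reindexes by $\sigma=\beta^{-1}$, uses the triangle inequality to split $D(\beta)$ into $D(\gamma)$ plus the ``sorted'' cost $\sum_k|a_k-k|$, and controls the latter by the $L_1$ rearrangement inequality. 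Your approach is shorter and more conceptual, and it makes transparent why the factor $2$ appears (one copy of $D(\gamma)$ from the triangle inequality, one from rearrangement); the paper's per-threshold inequalities are more hands-on but yield slightly finer local information (a bound at each level $j$ rather than only globally). Both are fully valid; your handling of ties in $\gamma$ via the non-strict monotonicity of $(a_k)$ is exactly right, and the swap justification you sketch for the rearrangement step is the standard one.
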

\begin{proof}[Proof of the claim]
For each integer $1\leq j \leq n$, let $\gamma_j^+$ be the number of integers $i$ such that $ i \leq j < \gamma(i)$. Similarly, let $\gamma_j^-$ be the number of $i$ such that $\gamma(i) \leq j < i$. Define $\beta_j^+, \beta_j^-$ similarly. 

We first show that $\D(\gamma) = \sum_{j=1}^n \gamma_j^+ + \gamma_j^-$. We use a double counting argument.
Note that if $i \leq \gamma(i)$, then the number of $j$ for which $i$ contributes one to $\gamma_j^+$ is precisely $|i-\gamma(i)|$, and the number of $j$ for which $i$ contributes to $\gamma_j^-$ is zero. Similarly, if $i > \gamma(i)$, then the number of $j$ for which $i$ contributes one to  $\gamma_j^-$ is $|i-\gamma(i)|$, and the number of $j$ for which $i$ contributes to $\gamma_j^+$ is zero. Therefore, $\D(\gamma) = \sum_{i: i < \gamma(i)} |i-\gamma(i)| + \sum_{i: i > \gamma(i)} |i-\gamma(i)| = \sum_{j=1}^{n}  \gamma_j^+ + \sum_{j=1}^{n}  \gamma_j^-$. 
Similarly, we have $\D(\beta) = \sum_{j=1}^n \beta_j^+ + \beta_j^-$.

To prove $\D(\beta)  \leq 2 \D(\gamma)$, 
it suffices to show for any $j$, $ \beta_j^+ \leq \gamma_j^+ + \gamma_j^-$ and $\beta_j^- \leq \gamma_j^+ + \gamma_j^-$.
The proofs of both inequalities are essentially the same argument, so we only show the first. If
$i$ contributes one to $\beta_j^+$, then $i \leq j < \beta(i)$. If $\gamma(i)>j$, then $i$ also contributes one to $\gamma_j^+$. Otherwise, $i,\gamma(i) \leq j < \beta(i)$. Since $\beta$ is a permutation, for the set of such $i$, since $i \leq j < \beta(i)$, we can match each such $i$ with a unique $i'$ such that $\beta(i') \leq j < i'$. However, since $\beta(i') < \beta(i)$ and $i' > i$, we must have $\gamma(i') < \gamma(i)$, thus $\gamma(i') < j <i'$, which implies that $i'$ contributes one to $\gamma_j^-$. We thus get $\beta_j^+ \leq \gamma_j^+ + \gamma_j^-$. Similarly, $\beta_j^- \leq \gamma_j^+ + \gamma_j^-$. Summing over all $j$, we get $\D(\beta)  \leq 2 \D(\gamma)$.
\end{proof}
\begin{proof}[Proof of Lemma \ref{sf1}]
It suffices to prove the following two inequalities.
\[ \sum_{i=1}^n \left|\frac{i}{n} -  \frac{\psi(i)}{n}\right| \leq 2\sum_{i=1}^n \left|\frac{i}{n}-\frac{\phi(i)}{n}\right|.\]
\[ \sum_{i=1}^n  \left|\frac{\pi(i)}{n}-\frac{\ta(\psi(i))}{n}\right| \leq 2\sum_{i=1}^n \left|\frac{\pi(i)}{n}-\frac{\pi(\phi(i))}{n}\right|.\]
To prove the first inequality, 
we, apply the claim with $\gamma(i) = \phi(i)$ and $\beta(i) = \psi(i)$.  By the definition of $\psi$ which is 
$\psi(i) < \psi(j)$ if and only if (1) $\phi(i) < \phi(j)$, or (2) $\phi(i) = \phi(j)$ but $i<j$, the condition of the claim holds. 
For the second inequality, we apply the claim with $\gamma(i) = \pi\circ \phi\circ \pi^{-1}(i)$ and $\beta(i) = \ta \circ \psi\circ \pi^{-1}(i)$. Similarly by the definition of $\ta$, the conditions for the claim holds. 
\end{proof}

\begin{lem}\label{moveshort}
Let $\PP$ be a proper hereditary family of permutations, $\sigma$ be a smallest permutation not in $\PP$, and $s = |\sigma|$. Let $C = 2c(\sigma)$, where $c(\sigma)$ is the F\"uredi-Hajnal constant for $\sigma$. Assume, for permutation $\pi$, that for each $j$, there are at most $4C \cdot 2^{j-1}$ frozen squares of side length $2^{-j}$. Further suppose that the random subpermutation $\alpha$ of $\pi$ contains at least one point in each rich square. Then 
\[ \sum_{i=1}^n d\left(\left(\frac{i}{n}, \frac{\pi(i)}{n}\right), \left(\frac{\phi(i)}{n}, \frac{\pi(\phi(i))}{n}\right)\right) \leq 2dn + 8\log(1/d)C\delta n.\]
\end{lem}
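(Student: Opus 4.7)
\medskip

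\noindent\textbf{Proof proposal.} The plan is to split the sum according to which cell of the final partition (active or frozen) the point $(i,\pi(i))$ lies in. By Lemma \ref{procedurec}(a), every index $i$ falls in exactly one square of $\Active \cup \Frozen$, so we partition the sum into these two buckets and bound each separately.

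For the active contribution: every square $R \in \Active$ remaining at the end of the procedure has side length $d$ by Lemma \ref{procedurec}(c), and by the first case in the definition of $\phi$, any point of $\pi$ lying in $R$ is mapped to the (unique) sample of $\alpha$ in the same square $R$. Hence the $L_1$ distance between $(i/n,\pi(i)/n)$ and $(\phi(i)/n,\pi(\phi(i))/n)$ is at most $2d$, and since there are at most $n$ such points in total, the active squares contribute at most $2dn$ to the sum.

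For the frozen contribution, I would fix a level $j$ with $1 \le j \le \log(1/d)$ and consider frozen squares of side length $2^{-j}$. By the second case in the definition of $\phi$, if the $i$-th point lies in such a frozen square $F$, then $\phi(i)$ picks a sample from a rich descendant of some sibling of $F$ (or from the parent itself if it is mature); in either case the target sample lies in the parent of $F$, which is a square of side length $2^{-(j-1)}$. Therefore the $L_1$ distance from the $i$-th point to its image is at most $2\cdot 2^{-(j-1)} = 4\cdot 2^{-j}$. Since each frozen square has density less than $\delta$ it contains fewer than $\delta n$ points, and by the lemma's hypothesis there are at most $4C\cdot 2^{j-1}$ frozen squares of that side length. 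So the number of points in frozen squares at level $j$ is at most $4C\cdot 2^{j-1}\cdot \delta n = 2C\cdot 2^{j}\delta n$, and the level-$j$ contribution is at most
\[ 2C\cdot 2^{j}\delta n \cdot 4\cdot 2^{-j} \;=\; 8C\delta n. \]
Summing over the $\log(1/d)$ possible levels $j$ gives a total frozen contribution of at most $8\log(1/d)C\delta n$. Adding the active and frozen bounds yields the desired inequality $2dn + 8\log(1/d)C\delta n$.

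There is no real obstacle here; the argument is a level-by-level accounting. The only point that needs care is the geometric claim that $\phi(i)$ always lies in the parent square of the frozen square containing $i$ (so that the per-point distance is controlled by twice the parent side length). This is an immediate consequence of the construction of $\phi$ combined with the fact that descendants of a square are contained in it, so the main work is just to cleanly combine the density bound $\delta$ on frozen cells with the Marcus--Tardos-type bound $4C\cdot 2^{j-1}$ on their number at each level, as above.
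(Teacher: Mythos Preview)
Your proof is correct and follows essentially the same approach as the paper's own argument: split into active versus frozen cells, bound the active contribution by $2dn$ via the common side length $d$, and at each frozen level $j$ multiply the per-point distance bound $2\cdot 2^{-(j-1)}$ by the point count $(4C\cdot 2^{j-1})\cdot \delta n$ to get $8C\delta n$, then sum over the $\log(1/d)$ levels. The only cosmetic difference is that the paper writes the level-$j$ product directly as $(4C\cdot 2^{j-1})\cdot \delta n \cdot 2/2^{j-1}$ rather than first simplifying each factor as you do.
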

\begin{proof}
For any point in an active square, the $L_1$ distance between it and the point of $\alpha$ it maps to is at most $2d$ since they are both in the same active square, which has side length $d$ by Lemma \ref{procedurec}. There are at most $n$ points of $\pi$ which are in active squares. Thus, the sum of the distances coming from points in active squares is at most $2dn$.

For a point $(i/n, \pi(i)/n)$ in a frozen square $R \in \Frozen$, suppose the side length of the square is $1/2^j$. The $L_1$ distance from it to the point in $\alpha$ it associates to is at most $2\cdot 1/2^{j-1}$ since these two points are in the same parent square whose side length is $1/2^{j-1}$. However, by assumption there are at most $4C \cdot 2^{j-1}$ frozen squares of side length $2^{-j}$. Therefore for each $j$, the sum of $L_1$ distances coming from points in frozen squares of side length  $1/2^j$ is at most
$(4C \cdot 2^{j-1}) \cdot  \delta n \cdot 2/2^{j-1}$, 
where $(4C \cdot 2^{j-1}) \cdot  \delta n$ gives an upper bound for the number of points in frozen squares of side length $1/2^{j}$ and $2/2^{j-1}$ is an upper bound for the $L_1$ distances between a point in $\pi$ and its image in $\alpha$. 
Therefore the sum of the distances coming from points in frozen squares of different side lengths is at most
\[ \sum_{j=1}^{\log(1/d)} 8C \delta n  = 8\log(1/d)C\delta n.\]

Combining the distances from points in active squares and points in frozen squares, we have that
\[ \sum_{i=1}^n d\left(\left(\frac{i}{n}, \frac{\pi(i)}{n}\right), (a_{\phi(i)}, b_{\phi(i)})\right) \leq 2dn + 8\log(1/d)C\delta n .\]
\end{proof}

Lemmas \ref{sf1} and \ref{moveshort}, together with the values of $\delta, d$ as given in (\ref{sec3delta}) and (\ref{sec3d}) give the following lemma.
\begin{lem}\label{pita}
Assume that for each $j$, there are at most $4C \cdot 2^{j-1}$ frozen squares of side length $2^{-j}$, and the sample contains at least one point in each rich square. Then 
\[ \pn(\pi, \ta) \leq 2(2d + 8\log(1/d)C\delta) < \epsilon/2.\]
\end{lem}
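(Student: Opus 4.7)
The plan is to chain the bounds established in Lemmas \ref{sf1} and \ref{moveshort} and then to substitute the explicit values of $d$, $\delta$, and $C$ fixed in (\ref{sec3d}) and (\ref{sec3delta}) together with $C=2c(\sigma)$. There is nothing new to invent; the work is just assembling the pieces and a short numerical verification.

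First, I would translate the earth mover's distance into the $[0,1]^2$ picture. Since $\pn$ is defined as a minimum over bijections, using the specific bijection $\psi$ constructed just before Lemma \ref{sf1} gives
\[ \pn(\pi, \ta) \;\leq\; \frac{1}{\binom{n}{2}}\sum_{i=1}^n \bigl(|i-\psi(i)| + |\pi(i)-\ta(\psi(i))|\bigr)
 \;=\; \frac{n}{\binom{n}{2}}\sum_{i=1}^n d\!\left(\!\left(\tfrac{i}{n},\tfrac{\pi(i)}{n}\right),\!\left(\tfrac{\psi(i)}{n},\tfrac{\ta(\psi(i))}{n}\right)\!\right), \]
where the final equality is because the integer $L_1$ distance is $n$ times the $L_1$ distance between the normalised points. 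Applying Lemma \ref{sf1} bounds the sum on the right by $2\sum_i d\bigl((i/n,\pi(i)/n),(\phi(i)/n,\pi(\phi(i))/n)\bigr)$, and Lemma \ref{moveshort} (whose hypotheses are exactly the hypotheses of Lemma \ref{pita}) bounds this latter sum by $2dn+8\log(1/d)\,C\delta n$. Combining and using $\tfrac{n}{\binom{n}{2}}=\tfrac{2}{n-1}$, which is at most $\tfrac{1}{n}\cdot$ a small constant for $n$ large, we obtain (after absorbing the $2n/(n-1)\to 2$ factor into the displayed constant for sufficiently large $n$, as guaranteed by the hypothesis $n\geq n_0$)
\[ \pn(\pi,\ta) \;\leq\; 2\bigl(2d + 8\log(1/d)\,C\delta\bigr). \]

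It remains to verify the strict inequality $2(2d+8\log(1/d)\,C\delta)<\epsilon/2$. I would handle the two summands separately. Since $d\leq \epsilon/64$ from (\ref{sec3d}), the first term satisfies $4d\leq \epsilon/16$. For the second term, plug in $C=2c(\sigma)$ and the value of $\delta$ from (\ref{sec3delta}) to get
\[ C\delta \;=\; \frac{\epsilon}{256\,\log(32/\epsilon)}, \]
so $16\log(1/d)\,C\delta = \dfrac{\epsilon\,\log(1/d)}{16\,\log(32/\epsilon)}$. Using $\log(1/d)=\lceil \log(64/\epsilon)\rceil\leq \log(128/\epsilon)=\log(32/\epsilon)+2$, and noting $\log(32/\epsilon)\geq 5$ for $\epsilon\leq 1$, the ratio $\log(1/d)/\log(32/\epsilon)$ is at most $7/5$. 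Hence the second term is at most $7\epsilon/80 < 7\epsilon/16$, and summing the two contributions gives a total strictly below $\epsilon/16+7\epsilon/16 = \epsilon/2$, as required.

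The only genuinely subtle point in the above is the constant-factor slack in passing from $\tfrac{n}{\binom{n}{2}}\cdot 2$ to the factor $2$ written in the lemma: this is absorbed by the hypothesis that $n$ is large (guaranteed by $n\geq n_0$), so the main obstacle is really bookkeeping rather than mathematical. All the substantive work has been done in Lemmas \ref{sf1} and \ref{moveshort}, and what remains is a clean chaining and a short arithmetic check.
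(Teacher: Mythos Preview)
Your approach is essentially identical to the paper's: the paper's entire proof of Lemma~\ref{pita} is the single sentence ``Lemmas \ref{sf1} and \ref{moveshort}, together with the values of $\delta, d$ as given in (\ref{sec3delta}) and (\ref{sec3d}) give the following lemma,'' and you carry out exactly this chaining plus an explicit numerical check that the paper omits.

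One small bookkeeping slip worth flagging: combining $\pn\le \frac{n}{\binom{n}{2}}\sum_i d(\cdot,\psi)$, the factor $2$ from Lemma~\ref{sf1}, and the bound $(2d+8\log(1/d)C\delta)n$ from Lemma~\ref{moveshort} actually yields a leading factor of $\frac{4n}{n-1}\to 4$, not $2$; your remark about ``$2n/(n-1)\to 2$'' drops the factor $2$ from Lemma~\ref{sf1}. This matches a minor imprecision in the paper's own displayed constant, but it is harmless: your numerical check shows $2(2d+8\log(1/d)C\delta)\le \epsilon/16+7\epsilon/80=3\epsilon/20$, so even with the correct factor $\approx 4$ the bound is still comfortably below $\epsilon/2$.
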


We have shown $\ta$ and $\pi$ are close in the earth mover's distance. We  now show that $\ta$ is close in the earth mover's distance to any other blow-up of $\alpha$, where the $j$-th point in $\alpha$ blows up into a block of size $|\phi^{-1}(j)| = |B_j|$. Thus $\pi$ and any of these blow-ups are close.

\begin{lem}\label{tata'}
Let $\ta'$ be any blow-up of $\alpha$ where the $j$-th point in $\alpha$ blows up into a block of size $|\phi^{-1}(j)| = |B_j|$. Then 
\[ \pn(\ta', \ta) \leq \epsilon/2.\]
\end{lem}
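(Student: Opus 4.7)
\textbf{Proof approach for Lemma \ref{tata'}.} The key observation is that $\ta$ and $\ta'$ have identical block structure: both are blow-ups of $\alpha$ with the same block-size vector $(b_1,\ldots,b_{|\alpha|})$, so the $j$-th horizontal block occupies the same horizontal positions in both permutations, and the values of either permutation on that block both lie in the same $b_j$-element vertical interval determined by $\alpha$'s $j$-th vertical coordinate. I would take $\theta = \mathrm{id}$ in the definition of the earth mover's distance, which yields
\[
\pn(\ta,\ta') \leq \frac{1}{\binom{n}{2}} \sum_{i=1}^n \bigl(|i-i| + |\ta(i)-\ta'(i)|\bigr) \leq \frac{1}{\binom{n}{2}} \sum_{j=1}^{|\alpha|} b_j^2 \leq \frac{n \max_j b_j}{\binom{n}{2}} \leq \frac{4 \max_j b_j}{n},
\]
using that $|\ta(i) - \ta'(i)| \leq b_j$ when $i$ lies in the $j$-th horizontal block and that $\sum_j b_j = n$.

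The main step is then to bound $\max_j b_j$. Since $b_t = |\phi^{-1}(j_t)|$ counts the points of $\pi$ routed by $\phi$ to the sample $(j_t,\pi(j_t))$ which lives in some rich square $R_t$, the construction of $\phi$ shows that $\phi^{-1}(j_t)$ consists of (i) points of $\pi$ inside $R_t$ together with, when $R_t \in \Mature$, those in its four frozen children---contributing at most $dn + 4\delta n$ points in total by Lemma \ref{procedurec}(e)---and (ii) points of $\pi$ inside frozen squares $R'$ whose non-mature parent's arbitrary choice routed them to $R_t$. For (ii), the routing rule forces $R'$ to share a common parent $A_{j-1}$ with the ancestor $A_j$ of $R_t$ at some level $j$, with $R_t$ lying inside the active child $A_j$; hence $R'$ is one of the at most three children of $A_{j-1}$ other than $A_j$, and there are at most $\log(1/d)$ ancestor levels. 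Since each frozen square contains fewer than $\delta n$ points of $\pi$, (ii) contributes at most $3\log(1/d)\cdot \delta n$. The delicate step is exactly this structural bookkeeping---verifying that only frozen squares in sibling positions along the root-to-$R_t$ path can route to $R_t$---which should follow directly from unwinding the definition of $\phi$.

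Combining, $\max_j b_j \leq dn + 4\delta n + 3\log(1/d)\,\delta n$. Plugging in $d \leq \epsilon/64$, $\delta = \epsilon/(512 c(\sigma)\log(32/\epsilon))$ and $\log(1/d) \leq 2\log(32/\epsilon)$ (using $c(\sigma) \geq 1$), a short calculation gives $\max_j b_j \leq \epsilon n/8$, and therefore $\pn(\ta,\ta') \leq 4\max_j b_j/n \leq \epsilon/2$, as required.
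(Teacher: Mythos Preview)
Your proposal is correct and follows essentially the same approach as the paper's proof. Both arguments take $\theta=\mathrm{id}$, bound $\pn(\ta,\ta')$ by $\frac{1}{\binom{n}{2}}\sum_j |B_j|^2 \le \frac{2}{n-1}\max_j |B_j|$, and then bound $\max_j |B_j|$ by tracing which frozen squares can route to a given rich square under $\phi$; the paper packages the latter as a separate Claim giving $|B_j|\le 3\delta n\log(1/d)+\max(dn,4\delta n)$, which is the same estimate you derive (your $dn+4\delta n$ in place of $\max(dn,4\delta n)$ is simply a looser but valid version).
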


 Before proving Lemma \ref{tata'}, we first obtain the following useful claim.
\begin{claim}
\[ \delta n \leq |B_j| \leq 3\delta n \log(1/d) + \max(dn, 4\delta n).\]
\end{claim}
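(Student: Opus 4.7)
The quantity $|B_j|=|\phi^{-1}(j_t)|$ counts the points of $\pi$ that the map $\phi$ routes to the $t$-th sample $(j_t,\pi(j_t))$, which is the unique sample chosen inside some rich square $R^*\in\Rich$ at a level $\ell$ satisfying $2^{-\ell}\ge d$, i.e., $\ell\le\log(1/d)$. I would split the points mapped to $R^*$'s sample into two groups: points of $\pi$ lying inside $R^*$ itself, and points of $\pi$ lying in some frozen square $F$ outside $R^*$ that nevertheless gets routed to $R^*$ by $\phi$.

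\textbf{Lower bound and the inside contribution.} Every point of $\pi$ inside $R^*$ is routed to its sample: if $R^*\in\Active$ this is case~1 in the definition of $\phi$; if $R^*\in\Mature$ then each such point lies in one of the four frozen children of $R^*$, whose parent $R^*$ is mature, so the first subcase of case~2 sends it to $R^*$'s sample. Since $R^*$ is rich it has density at least $\delta$, yielding $|B_j|\ge\delta n$ immediately. For the upper bound on the inside contribution, Lemma~\ref{procedurec}(e) says the density of $R^*$ is at most $d$ if $R^*\in\Active$ and at most $4\delta$ if $R^*\in\Mature$, so at most $\max(dn,4\delta n)$ points of $\pi$ lie inside $R^*$.

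\textbf{Contribution from outside $R^*$.} The main step is to identify which frozen squares $F$ outside $R^*$ can contribute to $B_j$. For such an $F$, its parent $P$ cannot be mature (otherwise $F$ would map to $P\ne R^*$), so by case~2 of $\phi$ the routing sends $F$ to a chosen mature-or-active descendant of some active sibling $R$ of $F$; for this descendant to equal $R^*$, the sibling $R$ must be an ancestor of $R^*$, forcing $P$ to lie on the ancestor path $A_0=[0,1]^2,A_1,\dots,A_\ell=R^*$ of $R^*$. Therefore every contributing $F$ must be a frozen child of some $A_k$ with $0\le k\le \ell-1$ other than $A_{k+1}$, and there are at most three such children per $A_k$; each has density strictly less than $\delta$ and hence contains fewer than $\delta n$ points of $\pi$. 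Summing over the at most $\log(1/d)$ levels gives strictly less than $3\delta n\log(1/d)$ points, which combined with the inside bound yields the claim. The only subtleties are bookkeeping: verifying that $A_k$ is never mature for $k<\ell$ (since its child $A_{k+1}$ is active at the moment $A_k$ is split, so not all four children of $A_k$ are frozen), and noting that, in the $R^*\in\Mature$ case, the frozen children of $R^*$ lie inside $R^*$ and so their points are already counted in $\max(dn,4\delta n)$ rather than in the $3\delta n\log(1/d)$ term.
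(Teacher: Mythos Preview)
Your proposal is correct and follows essentially the same approach as the paper: split $|B_j|$ into the points inside the rich square $R^*$ (bounded by $\max(dn,4\delta n)$ via Lemma~\ref{procedurec}(e), and giving the lower bound $\delta n$ since $R^*$ is rich) and the points in frozen squares outside $R^*$ routed to it, observing that any such frozen square must be a non-$A_{k+1}$ child of some ancestor $A_k$ of $R^*$, hence at most three per level over at most $\log(1/d)$ levels. Your write-up is in fact more explicit than the paper's, which simply asserts ``for each possible length $l'>l$, there are at most three frozen squares of side length $l'$ whose points are mapped to a sample in $R$'' without spelling out the ancestor-path structure or the bookkeeping points you flag.
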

\begin{proof}
By definition, $B_j = \phi^{-1}(j)$. 
Since $(t_j, \pi(t_j))$ is in an active or mature square, and all the other points in this square are mapped to it by $\phi$; we have $|B_j| \geq \delta n$. 

Given any mature or active square $R$ of side length $l$, by the partition procedure, we know for each possible length $l' > l$, there are at most three frozen squares of side length $l'$ whose points are mapped to a sample in $R$. Since there are at most $\log(1/l)$ possible side lengths of squares greater than $l$, the number of points coming from the frozen squares mapped to the sample point in $R$ is at most $\delta n \cdot (3 \log(1/l)) \leq 3\delta n \log(1/d)$.

Furthermore, by Lemma \ref{procedurec}, the number of points in $R$ is at most $\max(dn, 4\delta n)$. The points in $R$ and points in a frozen square mapped to $R$ are all the possible points mapping to $R$. Thus $|\phi^{-1}(j)| \leq 3\delta n \log(1/d) + \max(dn, 4\delta n).$
\end{proof}

\begin{proof}[Proof of Lemma \ref{tata'}]
By the assumption, we know $\ta, \ta'$ have the same corresponding blow-up block sizes. To get an upper bound for $\pn(\ta' ,\ta)$, we simply pick the identity permutation $\theta$ given by $\theta(i)=i$ for $i \in [n]$. We thus would like to upper bound
\[ \sum_{i=1}^n d((i/n, \ta(i)/n), (i/n, \ta'(i)/n) = \sum_{i=1}^n |(\ta(i)-\ta'(i))/n|.\]
Notice that if the $i$-th point in $\ta$ is in the $j$-th blow-up block, then the $i$-th point in $\ta'$ is also in the $j$-th block. Thus $|\ta(i) - \ta'(i)| \leq |B_j|$. 
This gives us
\beq    \sum_{i=1}^n |(\ta(i)-\ta'(i))/n| = \sum_{j=1}^{|\alpha|} \left( \sum_{i: (i, \ta(i)) \in B_j}  |(\ta(i)-\ta'(i))/n| \right) \leq \sum_{j=1}^{|\alpha|} |B_j|^2 /n. \label{eq:alphaalpha'} \eeq

We have $\sum_{j=1}^{|\alpha|} |B_j| = n$, and $0 \leq |B_j| \leq 3\delta n \log(1/d) + \max(dn, 4\delta n)$. To maximize $\sum_{j=1}^{|\alpha|}  |B_j|^2 /n$ with these constraints, we want the $|B_j|$'s as  close to the extreme values as possible. This gives $\sum_{j=1}^{|\alpha|} |B_j|^2 /n \leq (3\delta  \log(1/d) + \max(d, 4\delta )) n$ where we assign $n/(3\delta n \log(1/d) + \max(dn, 4\delta n))$ number of $|B_j|$'s to achieve the maximum possible value $3\delta n \log(1/d) + \max(dn, 4\delta n)$, and the rest are set to be 0. 

Therefore, we have 
\[ \pn(\ta',\ta) \leq \frac{1}{\frac{n-1}{2}} \sum_{i=1}^n |(\ta(i)-\ta'(i))/n| \leq \frac{n}{(n-1)/2}(3\delta  \log(1/d) + \max(d, 4\delta)) \leq \epsilon/2.\]
The first inequality is by the definition of $\pn$, and the last inequality is by our choices of $\delta, d$  as in (\ref{sec3delta}) and (\ref{sec3d}). 
\end{proof}

Combining the previous two lemmas, we have the following lemma. 

\begin{lem}\label{closeto}
Suppose the subpermutation $\alpha$ contains at least one point in each rich square. Let $\ta'$ be any blow-up of $\alpha$ such that the $j$-th point in $\alpha$ blows up into a block of size $|\phi^{-1}(j)| = |B_j|$. 
Assume that for each $j$, there are at most $4C \cdot 2^{j-1}$ frozen squares of side length $2^{-j}$. Then 
\[ \pn(\pi, \ta') < \epsilon.\]
\end{lem}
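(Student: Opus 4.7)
The plan is to obtain this lemma as an immediate corollary of the two previous lemmas, using the triangle inequality for the earth mover's distance. Under the hypotheses stated (the sample hits every rich square and the frozen squares at each scale $2^{-j}$ are few), Lemma \ref{pita} tells us $\pn(\pi, \ta) < \epsilon/2$, where $\ta$ is the specific blow-up of $\alpha$ constructed from the association map $\phi$. Lemma \ref{tata'} tells us that any other blow-up $\ta'$ of $\alpha$ whose block sizes match those of $\ta$ (namely $|B_j| = |\phi^{-1}(j)|$ for each $j$) satisfies $\pn(\ta', \ta) \leq \epsilon/2$.

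So the steps I would carry out are: first, verify that the hypotheses of Lemma \ref{pita} are exactly the hypotheses of the present lemma, which they are. Second, verify that $\ta'$ is of the form allowed in Lemma \ref{tata'}, i.e., a blow-up of $\alpha$ where the $j$-th point blows up into a block of the same size $|B_j|$ as in $\ta$; this too is part of the hypothesis. Third, invoke the triangle inequality $\pn(\pi, \ta') \leq \pn(\pi, \ta) + \pn(\ta, \ta')$ and add the two bounds to obtain $\pn(\pi, \ta') < \epsilon/2 + \epsilon/2 = \epsilon$.

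The one technical point worth mentioning is the triangle inequality itself. Since $\pn$ is (a normalized version of) the earth mover's / Wasserstein distance, it is well-known to be a metric: given an optimal matching between $\pi$ and $\ta$ and an optimal matching between $\ta$ and $\ta'$, composing them yields a bijection between the points of $\pi$ and those of $\ta'$ whose total $L_1$ cost is at most the sum of the two optimal costs. Thus triangle inequality holds, and the conclusion follows. There is no genuine obstacle here — the lemma is really a summary statement packaging together the work of Lemmas \ref{pita} and \ref{tata'}.
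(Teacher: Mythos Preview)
Your proposal is correct and matches the paper's own proof essentially verbatim: the paper also derives the lemma as an immediate consequence of Lemmas~\ref{pita} and~\ref{tata'} together with the triangle inequality for $\pn$, writing $\pn(\pi,\ta') \leq \pn(\pi,\ta) + \pn(\ta,\ta') < \epsilon$.
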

\begin{proof}
This is a direct consequence of Lemmas \ref{tata'} and \ref{pita} and the triangle inequality: \[\pn(\pi, \ta') \leq \pn( \pi , \ta) + \pn(\ta , \ta') < \epsilon.\]
\end{proof}

Assuming Lemma \ref{closeto}, we show that the second assertion of Theorem \ref{newm} holds.
\begin{proof}
We have obtained that with probability at least $1-\epsilon$, we can obtain a permutation $\alpha$ being a subpermutation of the sample permutation $\pi'$, and $\alpha$ hits each of the dense squares in $\Rich$ once. 
If our algorithm outputs ``$\pi \in \PP$", it implies that there is an $n$-blow-up of $\alpha$ in $\PP$. Therefore we can find a subpermutation $\ta'$ of the $n$-blow-up, where $\ta'$ is of length $n$, and it is a blow-up of $\alpha$ where the $j$-th point of $\alpha$ blows up into a block of size $|B_j|$. (Notice that this $\ta'$ may be different from the $\ta$ defined right before Lemma \ref{sf1}.) Since $\PP$ is hereditary, $\ta' \in \PP$.
However, by Lemma \ref{closeto}, we know  $\pn(\pi, \ta')  < \epsilon$.
This contradicts the assumption that $\pi$ is at distance at least $\epsilon$ from $\PP$ and completes the proof. 
\end{proof}

\section{Two-sided property testing under the planar metrics with universal bound}
In this section we prove Theorem \ref{maincut}. The tester we use is the same as the two-sided tester in Algorithm \ref{tester:1}, but with a different choice of $M$. 
The proof is similar to that of Theorem \ref{maincutnearlylinear}, but with a somewhat different analysis. When $\pi \in \PP$, by Lemma \ref{typicalsample} and the same argument as in the proof of Theorem \ref{maincutnearlylinear}, it can be seen that when $n$ is large enough (depending on $\PP$), the algorithm outputs $\pi \in \PP$ with probability at least $1-\epsilon$. 

We now handle the case when $\pi$ is $\epsilon$-far from $\PP$. 
Let 
\beq M= 20000/\epsilon^2 \label{sec4M} \eeq
 be the sample size. 
We will first prove the following lemma, which states that any permutation is very close to some blow up of a large enough sample with high probability. 

\begin{lem}\label{closetobu}
Let $\pi$ be a permutation of length $n$ and $M$ be as in (\ref{sec4M}). An $M$-sample chosen uniformly at random in $\pi$, given by the sample points $(j_t, \pi(j_t))$ for $t = 1,2,\dots, M$, induces a permutation $\alpha$. Then, with probability at least $1-\epsilon/2$, there exists a blow up of $\alpha$ which is within distance $\epsilon/2$ (under $\pn$) with $\pi$.
\end{lem}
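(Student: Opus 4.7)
The plan is to run the dyadic partition procedure of Section 3 (Algorithm \ref{partitionnm}) on $\pi$, form $\alpha$ by selecting one sample from each rich square, and build the blow-up $\ta$ and the bijection $\psi$ using the maps $\phi$ and $B_t$ from Section 3, so that Lemma \ref{sf1}, Lemma \ref{densesample}, and the active/frozen splitting all transfer directly. The only structural change is to pick the density threshold $\delta$ and the minimum square side length $d$ at a different scale---essentially $\delta = \Theta(\epsilon^2)$ and $d = \Theta(\epsilon)$---so that the analysis no longer invokes Marcus--Tardos, and hence no longer depends on any forbidden subpermutation of $\PP$.

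Concretely, first I would observe that with these parameter choices and $M = 20000/\epsilon^2$, repeating the union bound in the proof of Lemma \ref{densesample} gives that every rich square contains a sample point with probability at least $1-\epsilon/2$: there are at most $1/\delta = O(\epsilon^{-2})$ rich squares each of density at least $\delta$, and $(1/\delta)e^{-M\delta}\leq \epsilon/2$ once the constants are tuned. Conditional on this event, I construct $\alpha$, $\phi$, $\ta$, and $\psi$ exactly as in the proof of Lemma \ref{pita}, and apply Lemma \ref{sf1} to reduce the claim to an upper bound on
\[ \sum_{i=1}^n d\!\left((i/n,\pi(i)/n),(\phi(i)/n,\pi(\phi(i))/n)\right). \]
The contribution of points in active squares is at most $2dn$ exactly as in Lemma \ref{moveshort}.

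The new ingredient is to bound the frozen contribution with no input about forbidden patterns. Let $N_j$ denote the number of frozen squares of side $2^{-j}$. Two universal observations give
\[ N_j \leq 4^j \qquad\text{and}\qquad \sum_j N_j \leq 1/\delta, \]
the first because frozen squares of side $2^{-j}$ are disjoint inside $[0,1]^2$, and the second because each frozen square contains at most $\delta n$ points of $\pi$ while the total number of points is $n$. Since every point in a frozen square of side $2^{-j}$ is at $L_1$ distance at most $2^{-j+2}$ from its $\alpha$-image (they share a parent square of side $2^{-j+1}$), the frozen contribution is at most
\[ 4\delta n\sum_{j=1}^{\log(1/d)} N_j\, 2^{-j} \leq 4\delta n \sum_j \min(4^j,\,1/\delta)\,2^{-j} = O(n\sqrt{\delta}), \]
where the key estimate $\sum_j \min(4^j,1/\delta)\,2^{-j} = O(1/\sqrt{\delta})$ follows by splitting the sum at the crossover $4^j = 1/\delta$. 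Combining the active and frozen contributions, applying Lemma \ref{sf1}, and normalizing by $\binom{n}{2}$ yields $\pn(\pi,\ta) = O(d + \sqrt{\delta})$, which with the absolute constants in $\delta$ and $d$ tuned correctly is at most $\epsilon/2$.

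The main obstacle is precisely this universal bound on $\sum_j N_j 2^{-j}$. In Section 3 the stronger estimate $N_j \leq O(c(\sigma)\cdot 2^j)$ supplied by Marcus--Tardos allowed $\delta$ to be essentially linear in $\epsilon$ and hence only a logarithmic factor more samples than $1/\epsilon$; without that input, the only deterministic bound available is the two-sided inequality $N_j \leq \min(4^j,\,1/\delta)$. The observation that these two universal bounds already combine to yield $1/\sqrt{\delta}$ savings is what makes $\delta = \Theta(\epsilon^2)$ viable and renders the universal sample size $M = 20000/\epsilon^2$ just sufficient.
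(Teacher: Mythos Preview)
Your approach has a genuine gap at the hitting step. With $\delta=\Theta(\epsilon^2)$ and $M=20000/\epsilon^2$ you have $M\delta=\Theta(1)$, so the miss probability for a single rich square is $e^{-M\delta}=\Theta(1)$, and the union bound over $\Theta(1/\delta)=\Theta(\epsilon^{-2})$ rich squares is $\Theta(\epsilon^{-2})$, not $\leq\epsilon/2$. To make $(1/\delta)e^{-M\delta}\leq\epsilon/2$ you would need $M\delta\gtrsim\log(1/\epsilon)$, i.e.\ $\delta\gtrsim\epsilon^2\log(1/\epsilon)$; but then your frozen contribution becomes $\Theta(\sqrt{\delta})=\Theta(\epsilon\sqrt{\log(1/\epsilon)})$, which exceeds $\epsilon/2$ for small $\epsilon$. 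There is a real tension here that a single density threshold cannot resolve at sample complexity $M=\Theta(\epsilon^{-2})$.

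The paper's proof resolves this by dropping the requirement that every rich square be hit. It takes $\alpha$ to be the full $M$-sample (as in the lemma's statement, not one point per rich square) and maps each point of $\pi$ to its \emph{nearest} sample point. It then runs the dyadic partition with two thresholds: a larger $\delta_1=\Theta(\epsilon^2\log(1/\epsilon))$ on coarse levels $i\leq K$, where the number of active squares is small enough that one can afford the union bound for hitting all of them, and a smaller $\delta_2=\Theta(\epsilon^2)$ on fine levels $i>K$. At fine levels the argument does not insist every once-active square contain a sample; instead it bounds the total measure $|\NR_j|$ of points whose containing level-$j$ active square misses the sample (Lemma~\ref{condi}(c)), replacing a union bound over $\Theta(\epsilon^{-2})$ squares by an exponential tail bound on a sum. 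That trade is precisely what recovers the missing logarithmic factor.

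Two minor remarks. First, your justification of $\sum_j N_j\leq 1/\delta$ is backwards: a frozen square has at most $\delta n$ points, which gives no upper bound on the count. What you actually use in the displayed estimate is the per-level bound $N_j=O(1/\delta)$, and that does hold (each frozen square has an active parent, and active squares at a fixed level are disjoint with density $\geq\delta$). Second, since the crossover $4^j=1/\delta$ occurs near $2^j=1/\sqrt{\delta}\approx 1/d$, the bound $N_j\leq 4^j$ alone already gives $\sum_j N_j 2^{-j}=O(1/d)=O(1/\sqrt{\delta})$; the $1/\delta$ cap is not actually doing work in your parameter regime.
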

\begin{rem}
The sample size $M = O(1/\epsilon^2)$ is tight. The following is the main idea of the proof. For a permutation of length $n$ picked uniformly at random with $n \gg \epsilon^{-2}\log(1/\epsilon)$, if $M <2^{-7}\epsilon^{-2}$, then a square of side length $8\epsilon$ around a point of $\pi$ with probability at least $2/3$ does not contain any sample point.  Thus, we expect that most points have to move distance more than $2\epsilon$ in any blow-up of $\alpha$.  
\end{rem}
To prove Lemma \ref{closetobu}, we proceed similar to the proof of Theorem \ref{maincutnearlylinear}.  Define a map $\phi: [n] \to [n]$ which maps each point of $\pi$ to the closest (under the  Euclidean distance\footnote{Note that one can also easily use the $L_\infty$ or $L_1$ distance as they are Lipschitz equivalent; the proofs are basically the same but it is easier to explain it with figures using the Euclidean distance.} in $[0,1]^2$) sample point. 
 In other words, $\phi(i) = j$ is equivalent to that the point $(i, \pi(i))$ of $\pi$ is closest to the sample point $(j, \pi(j))$.  Similar to the proof of Theorem \ref{maincutnearlylinear}, from $\phi$ we also define a permutation $\ta: [n] \to [n]$ which is a blow-up of $\alpha$.  For each $t = 1,2,\dots, M$, let $P_t = \phi^{-1}(j_t)$; then the permutation restricted to the $t$-th blow-up block $B_t$ of $\ta$ is the subpermutation of $\pi$ restricted to the positions indicated in $P_t$. Clearly $|B_t| = |P_t|$. We also define a matching $\psi: [n] \to [n]$, such that for any $i \neq j \leq n, \psi(i) < \psi(j)$ if and only if one of the following holds: (1) $\phi(i) < \phi(j)$, or (2) $\phi(i) = \phi(j)$ and $i<j$. Thus $\psi$ gives a matching between points in $\pi$ and points in $\psi$. Similar as before, we will show that with probability at least $1 - \epsilon/2$, 
\beq \pn(\pi, \ta) < \epsilon/2, \label{eq:1}\eeq
which is a stronger statement  of Lemma \ref{closetobu};
We will further prove that with probability at least $1 - \epsilon/2$, if $\ta'$ is another blow-up of $\alpha$ where the $j$-th point of $\alpha$ blows up into a block of size $|B_j|$ as well, then 
 \beq \pn(\ta, \ta')< \epsilon/2.\label{eq:2}\eeq
  Inequalities (\ref{eq:1}) and (\ref{eq:2}) and the triangle inequality imply that $\pn(\pi, \ta') < \epsilon$. Assuming we can prove that $\pn(\pi, \ta') < \epsilon$, if the algorithm outputs ``$\pi \in \PP$'', then there is an $n$-blow-up of $\alpha$ in $\PP$. Therefore we can find a length-$n$ subpermutation $\ta'$ of this $n$-blow-up where the $j$-th point of $\alpha$ blows up into a block of size $|B_t|$. Clearly $\ta' \in \PP$ by the hereditary property. Since we have assumed $\pn(\pi, \ta') < \epsilon$, we reach a contradiction since we also assume $\pn(\pi, \PP) \geq \epsilon$.

Therefore we just need to prove two things: Lemma \ref{closetobu} or equivalently inequality (\ref{eq:1}), and (\ref{eq:2}). Note that as a consequence inequalities (\ref{eq:1}) and (\ref{eq:2}) will simultaneously hold with probability at least $1-\epsilon$. 
The proof consists of two parts. The first part is to show inequality (\ref{eq:1}), and the second part is to show inequality (\ref{eq:2}). The first part is done through the dyadic partition procedure again. 

\subsection{$\pn(\pi, \ta) < \epsilon/2$}
Similar to in the proof of Theorem \ref{maincutnearlylinear}, we again conduct a dynamic procedure in different levels, starting from level 0, where in level $i$ we look at squares of side lengths $2^{-i}$. Recall that the density of a square is the fraction of points in the permutation in the square (so it is not the relative density with respect to the area of the square). Let 
\beq d = 2^{-\lceil \log(48/\epsilon) \rceil} \label{sec4d} \eeq 
be the smallest side length of squares. Let $l_0 = \log(1/d)$ be the number of different levels. Let 
\beq  \delta_1 = \sqrt{2}/96 \cdot \epsilon^2 \log(1/\epsilon),  \ \ \delta_2 = \epsilon^2 / 5200, \ \ K = \lceil- \log(12 \cdot 16 \cdot \delta_1/\epsilon) \rceil  \label{sec4delta} \eeq
where $\delta_1, \delta_2$
are two density thresholds; If $i \leq K$, then a square of side length $2^{-i}$ is in $\Active$ if it has density at least $\delta_1$; if $i > K$, then a square of side length $2^{-i}$ is in $\Active$ if it has density at least $\delta_2$. We go through the same procedure as in Algorithm \ref{partitionnm}, with the only difference being that the density threshold for a square to be in $\Active$ depends on its side length. In summary, throughout the procedure, repeatedly some parent square in $\Active$ splits into four children squares, and a child square splits if is dense with respect to the corresponding density threshold.

For each level $0 \leq i \leq \log(1/d)$, let $\Frozen_i$ to be the set of squares in $\Frozen$ which are of side length $2^{-i}$. Let $\Mature_i$ be the set of squares in $\Mature$ which are of side length $2^{-i}$. Note that each square in $\Mature_i$ is a union of four squares in $\Frozen_{i+1}$. Let $\Active_i$ be the squares of side length $2^{-i}$ that are once active. Clearly $\Active_{l_0} = \Active$.

We will define a map $\phi': [n] \to [n]$ which maps points of $\pi$ to some sample point. First notice that $\phi$ is the map which maps each point of $\pi$ to its closest sample point; thus 
\beq \sum_{i=1}^n \left|\frac{i}{n} -  \frac{\phi(i)}{n}\right|+  \left|\frac{\pi(i)}{n} -  \frac{\pi(\phi(i))}{n}\right| \leq  \sum_{i=1}^n \left|\frac{i}{n} -  \frac{\phi'(i)}{n}\right|+  \left|\frac{\pi(i)}{n} -  \frac{\pi(\phi'(i))}{n}\right|.  \label{phi'phi} \eeq
We now define $\phi'$ such that we can bound the right hand side of (\ref{phi'phi}), and thus upper bounds the left hand side of (\ref{phi'phi}) which is about $\phi$. 

For a point $(i, \pi(i))$ in $\pi$, let $1 \leq j \leq l_0$ be the largest integer such that there exists a square in $\Active_j$ containing both the point $(i, \pi(i))$ of $\pi$ and some sample points. Let $\phi'(i) \in \{ j_t, t = 1,2,\dots, M\}$ be the position of a closest such sample point to $(i, \pi(i))$. In other words, we find the smallest square which was once in $\Active$ containing this point and some sample points and $\phi'$ maps $(i, \pi(i))$ to a closest such sample point.

To analyze the right hand side in (\ref{phi'phi}), 
we first partition points in $\pi$ into different groups as follows. 
For each $1 \leq i \leq \log(1/d)$, we define a set of squares $\Can_i$ as follows:\\
(1) if $0 \leq i < \log(1/d)$, $\Can_i$ consists of squares in $\Mature_i \cup \Frozen_{i+1}$. \\ (Notice $[0,1]^2$ cannot be in $\Mature$). \\ (2) if $i = \log(1/d)$, $\Can_i$ consists of squares in $\Active$, which by definition are of side length $d$.  \\ Denote the number of points of $\pi$ in $\Can_i$ as $n_i$. Thus $\sum_{i=0}^{\log(1/d)} n_i = n$. 
The following lemma is easy to check from the definitions.
\begin{lem}\label{partprop2}
\begin{enumerate}[(a)]
\item The squares in the sets $\Can_i$ partition $[0,1]^2$.
\item Given $1 \leq i \leq \log(1/d)$, the points in squares in $\bigcup_{i \leq j \leq \log(1/d)} \Can_j$ are the same as the points squares in $\Active_i$; The points of $\pi$ in squares in $\Can_i$ are the points in squares in $\Active_i$ but not in squares in $\bigcup_{j > i} \Active_j$.
\item For each $K+1 \leq i < \log(1/d)$, $n_i \leq 4\delta_2 \cdot 2^{2i}n$; for each $0 \leq i \leq K$, $n_i \leq 4\delta_1 \cdot 2^{2i}n$;
\end{enumerate}
\end{lem}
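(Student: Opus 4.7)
The plan is to verify each of the three assertions by directly unpacking the definitions of the partition procedure and applying Lemma \ref{procedurec}.

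For part (a), I would lean on Lemma \ref{procedurec}(a), which states that at the end of the procedure the final $\Active \cup \Frozen$ partitions $[0,1]^2$. Every frozen square has some side length $2^{-j}$ with $1\leq j \leq l_0$ and hence belongs to $\Frozen_j \subseteq \Can_{j-1}$; every surviving active square has side length $d = 2^{-l_0}$ and thus belongs to $\Can_{l_0}$. Each mature square $R \in \Mature_i$ is, by construction, the disjoint union of its four frozen children in $\Frozen_{i+1} \subseteq \Can_i$, so including the mature squares in $\Can_i$ adds no new points. Thus the squares of $\bigcup_i \Can_i$ cover $[0,1]^2$, and each point is uniquely assigned a level $i$ by the side length of the smallest frozen (or final active) square containing it.

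For part (b), I would use downward induction on $i$. The base case $i = l_0$ is immediate since the procedure stops before producing any squares of side less than $d$: no square of side $d$ is ever subdivided, so $\Active_{l_0} = \Active = \Can_{l_0}$ as a set of squares. For the inductive step, assume that the points in $\bigcup_{j\geq i+1} \Can_j$ equal the points lying in squares of $\Active_{i+1}$. Take any $R \in \Active_i$. When the procedure processes $R$, either (i) all four of its children are frozen, in which case $R \in \Mature_i \subseteq \Can_i$ and the points of $R$ lie in $\Can_i$; or (ii) at least one child is in $\Active_{i+1}$, in which case the frozen children lie in $\Frozen_{i+1}\subseteq \Can_i$ while the remaining (active) children lie in $\Active_{i+1}$. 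Either way the points of $R$ split between $\Can_i$ and $\Active_{i+1}$. Unioning over $R \in \Active_i$ and invoking the inductive hypothesis gives the first part of (b). The second part follows from the set-difference identity $\Can_i = \Active_i \setminus \Active_{i+1}$ (on the level of points), together with the observation that for $j > i+1$ each square of $\Active_j$ is contained in a square of $\Active_{i+1}$, so $\bigcup_{j>i}\Active_j = \Active_{i+1}$ as point sets.

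For part (c), by (a) the count $n_i$ equals the number of points of $\pi$ lying in frozen squares of side length $2^{-(i+1)}$ (the mature squares being redundant). There are at most $2^{2(i+1)} = 4\cdot 2^{2i}$ pairwise disjoint dyadic squares of side $2^{-(i+1)}$ in $[0,1]^2$. A square in $\Frozen_{i+1}$ has density strictly less than $\delta_1$ when $i+1\leq K$ and strictly less than $\delta_2$ when $i+1 > K$, i.e., contains fewer than $\delta_1 n$ or $\delta_2 n$ points of $\pi$ respectively. Multiplying yields $n_i \leq 4\delta_1 \cdot 2^{2i} n$ for $0 \leq i \leq K-1$ and $n_i \leq 4\delta_2 \cdot 2^{2i} n$ for $K \leq i < l_0$. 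Since $\delta_1 > \delta_2$ for the $\epsilon$ in our range (see the values in (\ref{sec4delta})), the $i = K$ instance also satisfies the weaker $\delta_1$-bound, matching the lemma statement exactly.

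The whole proof is careful bookkeeping; there is no substantive obstacle. The only place that takes a bit of attention is part (b), where one needs to correctly interpret $\Active_i$ (all once-active squares of side $2^{-i}$) and confirm that every such square either terminates as mature at its own level or propagates its active descendants one level deeper, so that together $\Mature_i$, $\Frozen_{i+1}$, and $\Active_{i+1}$ exactly tile the points of $\Active_i$.
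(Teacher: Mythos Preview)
Your proof is correct and follows essentially the same approach as the paper, which simply declares (a) and (b) ``clear from the definition of $\Can_i$'' and gives a brief counting argument for (c). Your treatment is considerably more explicit—particularly the downward induction for (b) and the careful handling of the boundary case $i=K$ in (c)—but the underlying logic is the same bookkeeping the paper has in mind.
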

\begin{proof}
The first two assertions are clear from the definition of $\Can_i$. 

To show (c) holds, notice that when $i < \log(1/d)$, $\Can_i$ consists of only squares in $\Frozen_{i+1}$ or $\Mature_i$. If $i \leq K$, each dyadic square of side length $2^{-i}$ has at most $4\delta_1n$ points of $\pi$ coming from squares in $\Mature_i$ or $\Frozen_{i+1}$, and there are $2^{2i}$ dyadic squares of side length $2^{-i}$. Thus the bound is achieved. Similar argument applies to the case when $i >K$. 
\end{proof}

 For a point in a square in $\Can_i$, we say that it is \emph{resolved in level $j$}, if $j$ is the largest integer such that there is a square in $\Active_j$ containing this point and a sample point. Clearly by the definition of $\Can_i$, $j \leq i$. And by the definition of $\phi'$ defined above, if a point $(s, \pi(s))$ of $\pi$ is resolved in level $j$, then
\beq
| s - \phi(s)| + |\pi(s) - \pi(\phi(s))| \leq | s - \phi'(s)| + |\pi(s) - \pi(\phi'(s))| \leq 2\cdot 2^{-j}. \label{phi'dis}
\eeq

Let the random variables $N_{i,j}$ to be the set of points in squares in $\Can_i$ which are resolved in level $j$. Let $n_{i,j} = |N_{i,j}|$. Thus $\sum_{j: 0\leq j \leq i}   n_{i,j} = n_i$, the number of points of $\pi$ in squares in $\Can_i$.  Let $\NR_j$ be the set of points in squares in $\bigcup_{l_0 \geq i \geq j}\Can_i$ which are not resolved in levels $l_0, l_0-1, \dots, j$. Thus by the definition of $\NR_j$, 
\beq |\NR_j |= \sum_{ j \leq s \leq l_0} \sum_{0 \leq t \leq j-1} n_{s,t}. \label{nscard}
\eeq

We first show that under some conditions, the quantity on the RHS of (\ref{phi'phi}) is indeed small; and then we show that with high probability, these conditions hold. 
\begin{lem}\label{moveshort2}
Let $c=1/2$. Suppose the following conditions holds simultaneously.
\begin{enumerate}[(i)]
\item for each $i \leq K$, points in squares in $\Can_i$ are resolved in level $i$; 
\item for each $i> K$, a point in a square in $\Can_i$ is resolved in level $j$ with $j \geq K$;
\item for each $i> K$, $|\NR_j| \leq c \delta_2 2^{2j}n$. 
\end{enumerate}
Then under these three conditions, 
\[ \sum_{i=1}^n \left|\frac{i}{n} -  \frac{\phi(i)}{n}\right|+  \left|\frac{\pi(i)}{n} -  \frac{\pi(\phi(i))}{n}\right|  \leq  \left( 16\delta_1 2^{K} +  \frac{8\delta_2}{d} + 2 \cdot d  + \frac{2c\delta_2}{d} \right) n.\]
\end{lem}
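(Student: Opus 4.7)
The plan is to pass from $\phi$ to $\phi'$ via inequality (\ref{phi'phi}) so that it suffices to bound $\sum_{i=1}^n \left(|i/n - \phi'(i)/n| + |\pi(i)/n - \pi(\phi'(i))/n|\right)$, then decompose the points of $\pi$ according to which canonical set $\Can_i$ they occupy (using Lemma \ref{partprop2}(a)) and, within each $\Can_i$, by the level $j$ at which $\phi'$ resolves them. Inequality (\ref{phi'dis}) tells us each such point contributes at most $2 \cdot 2^{-j}$ to the normalized sum, so the entire argument reduces to a weighted head-count grouped by $(i,j)$.

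For $i \leq K$, condition (i) forces every point in $\Can_i$ to be resolved at its own level $i$, so Lemma \ref{partprop2}(c) gives a contribution of at most $2 \cdot 2^{-i} \cdot n_i \leq 8 \delta_1 2^i n$. Summing the geometric series over $i = 0, 1, \ldots, K$ accounts for the $16 \delta_1 2^{K} n$ term. For $K < i < l_0$ with a point resolved at its own level $i$, the same argument with the $\delta_2$ density threshold yields $\leq 8 \delta_2 2^i n$; summing over $i$ up to $l_0 - 1 = \log(1/d) - 1$ produces the $\frac{8\delta_2}{d} n$ term. For $i = l_0$ resolved at level $l_0$, each point lies in the same active square of side length $d$ as its target sample, so its contribution is at most $2d$, and since there are at most $n$ such points, their total contribution is at most $2d n$, accounting for the $2d \cdot n$ term.

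The delicate case is the tail: points in $\Can_i$ with $i > K$ that, by condition (ii), are resolved at a strictly earlier level $j$ with $K \leq j < i$. Any such point must lie in $\NR_{j+1}$, since it belongs to $\bigcup_{s \geq j+1} \Can_s$ and is unresolved at levels $l_0, l_0 - 1, \ldots, j+1$. Grouping by $j$ and applying the per-point bound $2 \cdot 2^{-j}$ together with condition (iii), the tail is at most
\[ \sum_{j=K}^{l_0 - 1} 2 \cdot 2^{-j} \, |\NR_{j+1}| \;\leq\; \sum_{j=K}^{l_0 - 1} 2 \cdot 2^{-j} \cdot c\delta_2 \cdot 2^{2(j+1)} n, \]
which is a geometric series in $j$ dominated by its largest term at $j = l_0 - 1$ and collapses to at most $\frac{2c\delta_2}{d} n$, the final term of the claim. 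Adding the four estimates proves the lemma.

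The main obstacle is controlling the tail without incurring a $\log(1/d)$ factor that a naive level-by-level bound would introduce. The strength of condition (iii), namely the uniform $2^{-2j}$ decay of the unresolved counts $|\NR_j|$, is calibrated precisely so that the weighted tail sum is geometric with ratio $2$ and collapses into a single $\frac{2c\delta_2}{d}$ contribution instead of spreading across all $\log(1/d)$ scales; this is why the conditions are stated as they are.
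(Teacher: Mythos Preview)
Your overall strategy matches the paper's: pass to $\phi'$, stratify by $\Can_i$ and resolution level, and use (\ref{phi'dis}) together with Lemma~\ref{partprop2}(c) to bound each stratum. The treatments of the regimes $i\le K$, $K<i<l_0$ with $j=i$, and $i=l_0$ with $j=l_0$ are fine and match the paper.

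The problem is the tail arithmetic. Your bound $\sum_{i>j} n_{i,j}\le |\NR_{j+1}|$ is correct, but the ensuing sum does not collapse to $\tfrac{2c\delta_2}{d}n$. Expanding,
\[
\sum_{j=K}^{l_0-1} 2\cdot 2^{-j}\cdot c\delta_2\,2^{2(j+1)}n
\;=\; 8c\delta_2 n\sum_{j=K}^{l_0-1} 2^{j}
\;<\; 8c\delta_2\,2^{l_0}n
\;=\; \frac{8c\delta_2}{d}\,n,
\]
which is a factor of $4$ larger than the constant stated in the lemma. So as written your argument proves a weaker inequality than the one asserted.

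The paper recovers the sharper constant by an Abel summation rather than the direct majorization by $|\NR_{j+1}|$. Using (\ref{nrcarddiff}) to rewrite $\sum_{i>t} n_{i,t}$ in terms of $|\NR_{t+1}|-|\NR_t|$ and then summing by parts (with the boundary fact $|\NR_K|=0$ coming from conditions (i)--(ii)) turns the tail into $\sum_{t=K}^{l_0-1} 2\cdot 2^{-t}|\NR_t|$, i.e.\ with $|\NR_t|$ rather than $|\NR_{t+1}|$. Applying condition (iii) at index $t$ instead of $t+1$ is exactly what saves the factor of $4$ and yields $\tfrac{2c\delta_2}{d}n$. Your direct approach is conceptually cleaner, but to hit the stated constant you either need to redo the tail via this telescoping, or accept a bound of $\tfrac{8c\delta_2}{d}n$ and check that the downstream inequality $16\delta_1 2^K + \tfrac{8\delta_2}{d} + 2d + \tfrac{8c\delta_2}{d} < \epsilon/4$ still holds with the chosen parameters.
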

\begin{proof}
By (\ref{phi'dis}) and the definition of $n_{i,j}$, we have the bound
\begin{align}
& \sum_{i=1}^n \left|\frac{i}{n} -  \frac{\phi'(i)}{n}\right|+  \left|\frac{\pi(i)}{n} -  \frac{\pi(\phi'(i))}{n}\right| \nonumber \\
 \leq & \sum_{\log(1/d) \geq  i \geq j \geq 0} 2 \cdot 2^{-j} n_{i,j} \nonumber \\
 = &  \sum_{i=0}^K\sum_{j=0}^i 2\cdot 2^{-j} n_{i,j} +  \sum_{i=K+1}^{l_0} \sum_{j=0}^i 2\cdot 2^{-j} n_{i,j}  . \label{phi'dis2}
\end{align}
The first inequality is by (\ref{phi'dis}) and the definition of $n_{i,j}$; and the equality is simply regrouping the terms into two groups. 
By conditions (i) and (ii), we have for $0 \leq i \leq K$, $n_{i,i} = n_i$ and $n_{i,t} = 0$ for $t < j$.  For $i \geq K+1$, $n_{i,t} = 0$ for $t<K$. Thus combining with Lemma \ref{partprop2}(c) for the bound on $n_i$, the first summand in (\ref{phi'dis2}) can be bounded by
\beq   \sum_{i=0}^K \left(\sum_{j=0}^i  2\cdot 2^{-j} n_{i,j} \right) =   \sum_{i=0}^K 2\cdot 2^{-i} n_i \leq   \sum_{i=0}^K2\cdot 2^{-i} 4\delta_1 2^{2i}n ={8\delta_1}  \sum_{i=0}^K 2^{i}n \leq {16\delta_1}2^{K}n .  \label{firstsum}
\eeq
We now bound the second summand in (\ref{phi'dis2}). Note that $|\NR_K| = 0$ by conditions (i) and (ii). Furthermore, by (\ref{nscard}), 
\beq
|\NR_j| - |\NR_{j-1}| = n_{j,{j-1}} + \dots + n_{{l_0},{j-1}} - (n_{j-1,j-2} + \dots + n_{j-1,K}). \label{nrcarddiff}
\eeq
We now rewrite the second summand in (\ref{phi'dis2}).
\begin{align}
&  \sum_{i=K+1}^{l_0} \sum_{0 \leq j \leq i} 2\cdot 2^{-j} n_{i,j} \nonumber  \\
 = &
 \sum_{j=K+1}^{l_0} 2\cdot 2^{-j}n_{j,j} +  \sum_{t=K}^{l_0-1}\sum_{j=t+1}^{l_0} 2\cdot 2^{-t}n_{j,t}   \nonumber\\
= &  \sum_{j=K+1}^{l_0} 2\cdot 2^{-j}n_{j,j}  + \sum_{t=K}^{l_0-1} 2\cdot 2^{-t} (|\NR_{t+1}| - |\NR_t| + n_{t,t-1} + \dots + n_{t,K})  \nonumber \\
= & \sum_{j=K+1}^{l_0} 2\cdot 2^{-j}( n_{j,j}  + n_{j,j-1} + \dots + n_{j,K})+ \sum_{ t=K}^{l_0-1} 2\cdot 2^{-t} (|\NR_{t+1}| - |\NR_t|) \nonumber \\
= & \sum_{j=K+1}^{l_0} 2\cdot 2^{-j} n_j+ \sum_{t=K}^{l_0-1} 2\cdot (2^{-(t-1)} - 2^{-t}) |\NR_t| \nonumber \\
= &  \sum_{j=K+1}^{l_0} 2\cdot 2^{-j} n_j+  \sum_{t=K}^{l_0-1} 2\cdot 2^{-t} |\NR_t|  \nonumber\\
\leq &  \sum_{j=K+1}^{l_0-1} 2\cdot 2^{-j} (4\delta_2 2^{2j}n)+2\cdot d n+  \sum_{t=K}^{l_0-1} 2\cdot 2^{-t} (c\delta_2 2^{2t}n)  \label{secondsum}
\end{align}
where the second equality is by (\ref{nrcarddiff}), the first inequality is by Lemma \ref{partprop2}(c) and $n_{l_0} \leq n$, and condition (iii) in the lemma statement.
Using a simple upper bound on a geometric series, (\ref{secondsum}) is bounded above by
\beq
{8\delta_2} \cdot 2^{l_0}n+ 2 d n + {2c\delta_2}\cdot 2^{l_0}n = \frac{8\delta_2}{d} n+ 2 dn  + \frac{2c\delta_2n}{d}. \label{secondsum2}
\eeq
Therefore, combining (\ref{firstsum}) and (\ref{secondsum2}), we have
\beq
 \sum_{i=1}^n \left|\frac{i}{n} -  \frac{\phi'(i)}{n}\right|+  \left|\frac{\pi(i)}{n} -  \frac{\pi(\phi'(i))}{n}\right|
 \leq  \left( {16\delta_1}2^{K} +  \frac{8\delta_2}{d} + 2 \cdot d  + \frac{2c\delta_2}{d}  \right) n. \label{phi'dis2'}
\eeq
Thus, by (\ref{phi'phi}), the claim holds.
\end{proof}

Now we show that with probability at least $1-\epsilon/2$, the conditions in Lemma \ref{moveshort2} hold. 
\begin{lem}\label{condi}
Let $c=1/2$. With probability at least $1-\epsilon/2$, the following conditions holds simultaneously.
\begin{enumerate}[(a)] %
\item for each $i \leq K$, points in squares in $\Can_i$ are resolved in level $i$; 
\item for each $i> K$, a point in a square in $\Can_i$ is resolved in level $j$ with $j \geq K$;
\item for each $i> K$, $|\NR_j| \leq c \delta_2 2^{2j}n$. 
\end{enumerate}
\end{lem}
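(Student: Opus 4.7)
The plan is to handle conditions (a), (b), and (c) separately with a union bound, allotting each a failure-probability budget of at most $\epsilon/6$ so that the total failure probability is at most $\epsilon/2$.

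Conditions (a) and (b) can be unified. Observe that (a) at level $i\le K$ asks that for every point in a $\Can_i$-square, its unique ancestor in $\Active_i$ (either the mature square itself, or the once-active parent of the frozen square containing it) contains a sample point; (b) asks the same at level $K$ for points in $\Can_i$ with $i>K$. Hence both follow once every square in $\bigcup_{i=0}^{K}\Active_i$ contains at least one sample point. Each such square has density at least $\delta_1$, so is missed with probability at most $(1-\delta_1)^M\le e^{-\delta_1 M}$. Since $|\Active_i|\le 4^i$, summing over $i$ gives a failure probability of at most $2\cdot 4^K e^{-\delta_1 M}$. Plugging in $M=20000/\epsilon^2$, $\delta_1=\sqrt{2}\epsilon^2\log(1/\epsilon)/96$, and $2^K\le 1/(\sqrt{2}\epsilon\log(1/\epsilon))$ shows that $\delta_1 M=\Theta(\log(1/\epsilon))$ with a much larger implicit constant than $\log(4^K)$, so this failure probability is (at most) a large polynomial in $\epsilon$, easily below $\epsilon/6$.

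For condition (c), I would apply Chebyshev's inequality to $|\NR_j|$ separately for each $j$ with $K<j\le l_0$, then union-bound over the $l_0-K=O(\log\log(1/\epsilon))$ relevant values. Writing $|\NR_j|=\sum_{S\in\Active_j}p_S n\cdot Z_S$ where $Z_S=1$ if $S$ receives no sample point and $0$ otherwise, the events $\{Z_S\}_{S\in\Active_j}$ are pairwise negatively correlated for sampling without replacement (a standard consequence of log-concavity of binomial coefficients), which gives $\Var[|\NR_j|]\le n^2\sum_S p_S^2 q_S$ with $q_S=E[Z_S]\le e^{-p_S M}$. Since $\delta_2 M>2$, the function $p\mapsto p^a e^{-pM}$ is decreasing on $[\delta_2,1]$ for $a\in\{1,2\}$, and combining this with $|\Active_j|\le 4^j=2^{2j}$ yields $E[|\NR_j|]\le 2^{2j}\delta_2 n\,e^{-\delta_2 M}$ and $\Var[|\NR_j|]\le 2^{2j}\delta_2^2 n^2 e^{-\delta_2 M}$. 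With $c=1/2$ and $e^{-\delta_2 M}$ a small constant, the expected value is strictly less than a fixed fraction of the target $c\delta_2 2^{2j}n$, so Chebyshev yields $\Pr[|\NR_j|>c\delta_2 2^{2j}n]=O(e^{-\delta_2 M}/2^{2j})$. Summing over $j$ with $2^{2(K+1)}=\Omega(1/(\epsilon^2\log^2(1/\epsilon)))$ gives a total failure bound of $O(\epsilon^2\log^2(1/\epsilon)\log\log(1/\epsilon))$, comfortably below $\epsilon/6$.

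The main obstacle is condition (c). Because $E[|\NR_j|]$ already scales like $2^{2j}\delta_2 n$, the same as the target threshold, Markov's inequality gives only a constant per-$j$ failure probability, which cannot survive the union bound over $j$. Overcoming this requires the second-moment argument, and the second-moment bound in turn depends essentially on the negative correlation of the $\{Z_S\}$ (ruling out positive covariance contributions). The crucial fact is that the extra factor of $\delta_2$ in the bound $\sum p_S^2 q_S\le\delta_2\cdot \sum p_S q_S$ is exactly what is needed to make Chebyshev produce a per-$j$ tail probability that decays like $1/2^{2j}$, enabling a successful union bound.
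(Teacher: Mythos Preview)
Your treatment of conditions (a) and (b) matches the paper's: both follow from the event that every square in $\bigcup_{i\le K}\Active_i$ receives a sample point, and the union bound $\sum_{i\le K}4^i e^{-\delta_1 M}<2\cdot 4^K e^{-\delta_1 M}\le\epsilon/4$ is exactly the paper's computation.

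For condition (c) your argument is correct but takes a genuinely different route from the paper. The paper bounds $\Pr\bigl(|\NR_j|\ge c\delta_2 2^{2j}n\bigr)$ by a \emph{union bound over all subsets} of $\Active_j$: any fixed collection of squares carrying total mass at least $c\delta_2 2^{2j}n$ is missed by the entire sample with probability at most $e^{-c\delta_2 2^{2j}M}$, and there are at most $2^{|\Active_j|}\le 2^{2^{2j}}$ such collections, giving a per-level bound $2^{2^{2j}}e^{-c\delta_2 2^{2j}M}$, which decays doubly exponentially in $j$ since $c\delta_2 M>\ln 2$. Your Chebyshev route (with the indicators $Z_S$ pairwise negatively correlated under sampling without replacement, via log-concavity of $k\mapsto\binom{k}{M}$) gives only $O\bigl(e^{-\delta_2 M}/4^{j}\bigr)$ per level, but this is still summable over $j>K$ to $O(4^{-K})=O(\epsilon^2\log^2(1/\epsilon))\ll\epsilon/6$. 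The paper's argument is shorter and avoids any covariance computation, and its much stronger per-level tail gives far more slack in the constants; your second-moment argument is closer to standard concentration reasoning and makes explicit exactly where the improvement over the Markov baseline comes from (the extra factor $p_S\ge\delta_2$ in the variance bound $\sum_S p_S^2 q_S\le\delta_2\sum_S p_S q_S$). A minor point: the $\log\log(1/\epsilon)$ factor in your final estimate is superfluous, since the $4^{-j}$ already form a geometric series.
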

\begin{proof}
For each $0 \leq i \leq K$, and for each square in $\Active_i$, we know it has at least $\delta_1n$ points by the definition of $\Active_i$. The probability that this square does not contain a sample point is bounded above by $(1-\delta_1)^M \leq e^{-\delta_1 M}$. There are at most $2^{2i}$ squares in $\Active_i$, therefore by the union bound, the probability that the sample misses some square in $\Active_i$ is at most $2^{2i} e^{-\delta_1M}$. By the union bound again, the probability that there is an $0\leq i \leq K$ such that the sample misses some square in $\Active_i$ is bounded above by 
\beq
\sum_{i=0}^K 2^{2i} e^{-\delta_1M} < 2^{2K+1} e^{-\delta_1 M} \leq \epsilon/4. \label{probsmall}
\eeq
where the last inequality is by our choices of $\delta_1, K$ as in (\ref{sec4delta}).

Now we consider the case $K+1 \leq i \leq l_0$. Recall $\NR_i$ are the points in squares in $\Active_i$ which do not contain a sample point. Each square of side length $2^{-i}$ in $\Active_i$ has at least $\delta_2 n$ points of $\pi$ by definition. Suppose $|\Active_i| = T_i$. We know $|T_i| \leq 2^{2i}$ since there are at most $2^{2i}$ squares of side length $2^{-i}$. Given any set of squares in $\Active_i$ which consist of at least $c\delta_2 2^{2i}n$ points of $\pi$, the probability that these squares do not contain a sample point is at most $(1- c\delta_2 2^{2i})^M \leq e^{-c\delta_2 2^{2i} M}$. Since there are at most $2^{|T_i|} \leq 2^{2^{2i}}$ choices for these squares, by the union bound, 
the probability that some squares in $\Active_i$ containing at least $c\delta_2 2^{2i}n$ points and do not contain a sample point is bounded above by $2^{ 2^{2i}} e^{-c\delta_2 2^{2i} M}$. Therefore
 \begin{align}
 & \Pr(|\NR_i| \geq c\delta_2 2^{2i}n) \leq 
 2^{ 2^{2i}} e^{-c\delta_2 2^{2i} M} = e^{c 2^{2i}(c' - \delta_2 M)} \label{probbig}
\end{align}
where $c' =
 \ln(4e)$. 
By the union bound, the probability that $|\NR_i| \geq c\delta_2 2^{2i}n$ for some $K+1 \leq i \leq l_0$ is bounded above by $\sum_{i = K+1}^{l_0}   e^{c 2^{2i}(c' - \delta_2 M)}.$
Since $c' -\delta_2 M  = \ln(4e) - 20000/5200 = -1.46$ by the values of $M$ and $\delta_2$ as in (\ref{sec4M}) and (\ref{sec4delta}),  we have
\begin{align}
 & \Pr(|\NR_i| \geq c\delta_2 2^{2i}n \text{ for some } K+1 \leq i \leq l_0) \\
\leq 
&  \sum_{i = K+1}^{l_0}   e^{c 2^{2i}(c' - \delta_2 M)} 
<   \sum_{j = 2^{2(K+1)}}^{\infty}   e^{c j(c' - \delta_2 M)}   \nonumber \\
 = &  e^{c(c' - \delta_2 M) (2^{2(K+1)}-1)} / (e^{-c(c' - \delta_2 M)} -1) < \epsilon/8. \label{probcase2}
\end{align}
Thus the claim in the lemma holds by (\ref{probsmall}) and (\ref{probcase2}). 
\end{proof}

Combining Lemma \ref{moveshort2} and Lemma \ref{condi}, we know that with probability at least $1-\epsilon/2$, 
\[ \sum_{i=1}^n \left|\frac{i}{n} -  \frac{\phi(i)}{n}\right|+  \left|\frac{\pi(i)}{n} -  \frac{\pi(\phi(i))}{n}\right|  \leq \left( 16\delta_1 2^{K} +  \frac{8\delta_2}{d} + 2 \cdot d  + \frac{2c\delta_2}{d} \right) < \epsilon n/4.\]
Recall the definitions from earlier: $\ta$ is the blow-up of the subpermutation $\alpha$ induced by the sample in $\pi$ where the $t$-th blow-up block is the subpermutation induced by the locations $\phi^{-1}(j_t)$ in $\pi$; and $\psi$ is the bijection between locations of $\pi$ and $\ta$ with $\psi(i) < \psi(j)$ if and only if $\phi(i) < \phi(j)$ or $\phi(i)  = \phi(j)$ but $i<j$. 
 Then for the bijection $\psi$ and the blow-up permutation $\ta$, similar to the proof before, we have by Lemma \ref{sf1} that
\begin{align*}
\pn(\pi, \ta)  &\leq  \frac{1}{(n-1)/2} \sum_{i=1}^n d\left(\left(\frac{i}{n}, \frac{\pi(i)}{n}\right), \left(\frac{\psi(i)}{n}, \frac{\ta(\psi(i))}{n}\right)\right)  \\
& \leq \frac{2}{(n-1)/2}\sum_{i=1}^n d\left(\left(\frac{i}{n}, \frac{\pi(i)}{n}\right), \left(\frac{\phi(i)}{n}, \frac{\pi(\phi(i))}{n}\right)\right) < \epsilon/2.
\end{align*}

\subsection{$\pn(\ta, \ta') < \epsilon/2$}
In this section we show the following lemma.
\begin{lem}\label{tata'2}
With probability at least $1-\epsilon/2$, any blow-up $\ta'$ of $\alpha$ with the $j$-th point of $\alpha$ blows up into a block of size $|\phi^{-1}(j)| = |B_j|$, satisfies
\[
\pn(\ta', \ta) < \epsilon/2.
\]
\end{lem}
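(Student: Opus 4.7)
The plan is to follow the same strategy as in the proof of Lemma~\ref{tata'}: pick the identity bijection $\theta(i) = i$ to upper bound $\pn(\ta, \ta')$. Since $\ta$ and $\ta'$ are both blow-ups of $\alpha$ with the same block sizes $|B_t|$, the $i$-th entries lie in the same block and so $|\ta(i) - \ta'(i)| \leq |B_{\phi(i)}|$. Summing,
\[
\pn(\ta, \ta') \leq \frac{2}{n(n-1)} \sum_{i=1}^n |\ta(i) - \ta'(i)| \leq \frac{2}{n(n-1)} \sum_{t=1}^M |B_t|^2,
\]
so the task reduces to showing $\sum_t |B_t|^2 \leq \epsilon n(n-1)/4$ with probability at least $1-\epsilon/2$.

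I would reuse the high-probability event from Lemma~\ref{condi} (at no extra probability cost beyond the $\epsilon/2$ already paid), which gives fine control over each point's resolution level. Decompose $|B_t| = \sum_j |B_t^{(j)}|$, where $B_t^{(j)}$ is the set of points in $B_t$ resolved at level exactly $j$. Two facts combine: (i) any $p \in B_t^{(j)}$ satisfies $d(p, t) \leq \sqrt{2}\cdot 2^{-j}$, because by the resolution definition some sample is within that distance of $p$ and $t$ is the nearest sample; and (ii) by the permutation structure (any axis-aligned square of side $\ell$ contains at most $\ell n$ points of $\pi$), the number of points in a disk of radius $\sqrt{2}/2^j$ around $t$ is at most $O(n/2^j)$. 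Hence $|B_t^{(j)}| \leq O(n/2^j)$.

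Using Lemma~\ref{partprop2}(c) together with the conditions of Lemma~\ref{condi}, the total number of points at resolution level $j$ satisfies $N_j := \sum_t |B_t^{(j)}| \leq 4\delta_1 \cdot 2^{2j} n$ for $j \leq K$, and for $j > K$ is controlled by $|\NR_j| \leq c\delta_2 \cdot 2^{2j} n$ together with a level-$j$ strip bound. Applying
\[
\sum_t |B_t^{(j)}|^2 \leq \bigl(\max_t |B_t^{(j)}|\bigr)\, N_j \leq O\bigl(n N_j / 2^j\bigr)
\]
at each level and combining across levels via Cauchy--Schwarz and a geometric summation, the calibration $\delta_1 \asymp \epsilon^2 \log(1/\epsilon)$, $\delta_2 \asymp \epsilon^2$, and $K \asymp \log(1/\epsilon)$ should force $\sum_t |B_t|^2 < \epsilon n(n-1)/4$.

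The main obstacle is the technical bookkeeping across the $O(\log(1/\epsilon))$ resolution levels: the crude bound $\sum_t |B_t|^2 \leq n \max_t |B_t|$ that worked in Lemma~\ref{tata'} is insufficient here, since $\max_t |B_t|$ can be as large as $\Theta(\epsilon n \log(1/\epsilon))$, exceeding $\epsilon n$ by a logarithmic factor and giving only $\pn(\ta, \ta') = O(\epsilon \log(1/\epsilon))$. A level-by-level decomposition that balances the geometric decay $O(n/2^j)$ of each individual block's contribution against the polynomial growth $O(\delta \cdot 2^{2j} n)$ of $N_j$ is the key step, and the tuned sample size $M = 20000/\epsilon^2$ is precisely what allows this balance to close below $\epsilon/2$.
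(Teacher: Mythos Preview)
Your route diverges substantially from the paper's, and your diagnosis of the main obstacle is off.

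The paper does \emph{not} try to reuse the event from Lemma~\ref{condi}. Instead it runs a \emph{second}, independent dyadic partition procedure with a single threshold $\delta_3=\epsilon^2\log(2/\epsilon)/448$ and minimum side length $d'\approx\epsilon^2$, and pays a fresh $\epsilon/2$ of probability for the event that every once-active square in \emph{this} partition contains a sample. Under that event it proves (Claim~\ref{claim:bj}) a uniform bound
\[
|B_j|\ \le\ 56\,\log(2/d')\,\delta_3\,n\ =\ O\!\bigl(\epsilon^2\log^2(1/\epsilon)\bigr)\,n
\]
for every block, via a purely geometric count: the Voronoi-style preimage of a sample can only receive points from squares at each level that lie within a small disk around the sample, and at each level that disk meets at most $14$ such squares. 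With this uniform bound in hand, the paper then uses precisely the ``crude'' estimate you dismissed,
\[
\sum_j |B_j|^2/n\ \le\ \max_j|B_j|\ \le\ 56\log(2/d')\,\delta_3\,n,
\]
and the chosen $\delta_3$ makes the right side at most $\epsilon/4$. So the crude bound \emph{is} sufficient; what changes is the event under which one controls $\max_j|B_j|$.

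Your alternative --- reuse the Section~4.1 event and decompose $|B_t|=\sum_j|B_t^{(j)}|$ by resolution level --- is a reasonable idea, but the details you sketch do not close. Expanding $\sum_t|B_t|^2=\sum_{j,j'}\sum_t|B_t^{(j)}||B_t^{(j')}|$ and bounding the inner sum by $\bigl(\max_t|B_t^{(\max(j,j'))}|\bigr)\,N_{\min(j,j')}$ reduces to controlling $\sum_j 2^{-j}N_j$, which is essentially the quantity already bounded by $\epsilon n/4$ in Lemma~\ref{moveshort2}; but the accumulated constants (the $L_1$-ball-to-square loss giving $|B_t^{(j)}|\le 4\cdot 2^{-j}n$, the factor~$2$ from symmetrizing $j,j'$, and the factor $2/(n-1)$ in the normalization) leave you short of $\epsilon/2$ by roughly an order of magnitude with $M=20000/\epsilon^2$. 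The Cauchy--Schwarz you mention only makes this worse by a $\log(1/\epsilon)$ factor. The paper's introduction of a separate partition tuned for this purpose is exactly what sidesteps the bookkeeping you identify as the main obstacle.
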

\begin{proof}
As in the proof of Lemma \ref{tata'}, we again obtain 
(\ref{eq:alphaalpha'}), which states 
\beq    \sum_{i=1}^n |(\ta(i)-\ta'(i))/n| = \sum_{j=1}^{|\alpha|} \left( \sum_{i: (i, \ta(i)) \in B_j}  |(\ta(i)-\ta'(i))/n| \right) \leq \sum_{j=1}^{|\alpha|} |B_j|^2 /n. \label{eq:alphaalpha'2} \eeq

To analyze $|B_j|$, we conduct another dyadic partition similar to that in Algorithm \ref{partitionnm} with the following parameters. Let 
\beq \delta_3:=\epsilon^2\log(2/ \epsilon) / 448 \label{sec4delta3}\eeq
 be the density threshold for a square to be in $\Active$. The smallest side length of the squares is $d' = 2^{- \lceil 2 \log(1/(\epsilon))\rceil}$.  Thus $\epsilon^2/2 < d' \leq \epsilon^2$. Again, let $\Active_i$ be the set of squares of side length $2^{-i}$ and were once in $\Active$; let $\Frozen_i \subset \Frozen$ be the set of squares of side length $2^{-i}$. 
We have the following claim.
\begin{claim}\label{claim:bj}
With probability at least $1- \epsilon/2$, for any $1 \leq j \leq |\alpha| = M$, 
\[ |B_j| \leq 56 \log(2/d')\delta_3 n.\]                                                                                                                                                                        
\end{claim}
We delay the proof of this claim to later. Assuming this claim holds, we complete the proof of Lemma~\ref{tata'2}. 

Since $\sum_{j=1}^M |B_j| = n$ and by the claim, $|B_j| \leq  B^* := 56 \log(2/d')\delta_3 n$, by convexity we have
\beq
\sum_{j=1}^M |B_j|^2 / n \leq {(B^*)}^2 \cdot 1/n \cdot (n/B^*) = B^* =   56 \log(2/d')\delta_3 n.
\eeq
Therefore by (\ref{eq:alphaalpha'2}) and applying the same argument as in Lemma \ref{tata'}, we have for any $\epsilon>0$,
\begin{align*}
\pn(\ta, \ta') \leq &2 \cdot 56 \log(2/d')\delta_3   \leq 112 \log(4/\epsilon^2) \delta_3
\leq \epsilon/2.
\end{align*}  
The last inequality holds by our choice of $\delta_3$ as in (\ref{sec4delta3}). 
\end{proof}

We now finish this section by completing the proof of Claim \ref{claim:bj}
\begin{proof}[Proof of the Claim \ref{claim:bj}]
Notice that a smallest square is of side length $d'$ and can contain at most $d'n < \delta_3 n$ points of $\pi$. Hence, the smallest squares cannot be active, so  $\Active = \emptyset$. We next show that with probability at least $1-\epsilon/2$, for each $0 \leq i \leq \log(1/d')$, each square in $\Active_i$ for some $i$ contains a sample point. The reasoning is the same as in Lemma \ref{densesample}. For each square in $\Active_i$, the probability that this square does not contain a sample point is at most $(1-\delta_3)^M$. There are at most $2^{2i}$ squares in $\Active_i$; thus there are at most $\sum_{i=0}^{\log(1/d')} 2^{2i} \leq 2 \cdot d'^{-2}$ squares in $\bigcup_{i=0}^{\log(1/d')} \Active_i$. Thus, by the union bound, the probability that for every $i$, every square in $\Active_i$ contains a sample point is at least 
\[ 1- 2d'^{-2} (1-\delta_3)^M \geq 1- 2d'^{-2} e^{-\delta_3 M}  \geq 
1- \epsilon/2.\] 

We assume now that each square once in $\Active$ contains a sample point. We know $[0,1]^2$ is now partitioned into squares in $\Frozen$. Let $X = (j_t, \pi(j_t))$ be a sample point. Suppose $X$ is in $\Frozen_s$ for some $s$, which means it is in a square $S$ of side length $2^{-s}$ that is in $\Frozen$.  We show that $|\phi^{-1}(j)|$ is small by counting the number of squares in $\Frozen_{i}$ containing points of $\pi$ that are mapped to this sample point $X$ for each $i$. Note that a square in $\Frozen_i$ must have a parent square in $\Active_{i-1}$; and on the other hand, each square in $\Active_{i-1}$ contains at most four squares in $\Frozen_i$. It thus suffices to upper bound the number of squares in $\Active_{i-1}$ that contain a point of $\pi$ which maps to $X$. 

\paragraph{Case 1: $i-1 = s$.} Since each square in $\Active_{i-1}$ contains a sample point, the Euclidean distance between any point of $\pi$ in this square and the sample point in $\Active_{i-1}$ is at most $\sqrt{2} \cdot 2^{-(i-1)}$. Thus for any square $S'$ other than $S$ in $\Active_{i-1}$, if $S'$ contains a point of $\pi$ mapped to the sample point $X$ in $S$, the distance between $X$ and $S'$ is at most $\sqrt{2} \cdot 2^{-(i-1)}$. Since the disc with center $X$ and radius $\sqrt{2} \cdot 2^{-(i-1)}$ can intersect at most $14$ squares of side length $2^{-i}$ (see Figure \ref{fig:boundBtcase1}) (including $S$ itself), we know there are at most $14$ squares in $\Active_{i-1}$ containing points of $\pi$ mapped to $X$. Therefore there are at most $4 \cdot 14$ squares in $\Frozen_{i}$ containing points of $\pi$ mapped to $X$; this implies at most $4 \cdot 14 \delta_3 n$  points of $\pi$ contained in squares in $\Frozen_i$ mapped to $X$. 

\begin{figure}[h]
\centering
\caption{Case 1. \footnotesize{The yellow squares are the square $S$ containing the sample points $X$ and $X_1$ respectively. The circles with centers $X$ and $X_1$ and radius $\sqrt{2}  \cdot 2^{-(i-1)}$ respectively each intersects at most $13$ other squares in $\Active_{s}$.
}}
\includegraphics[scale=0.9]{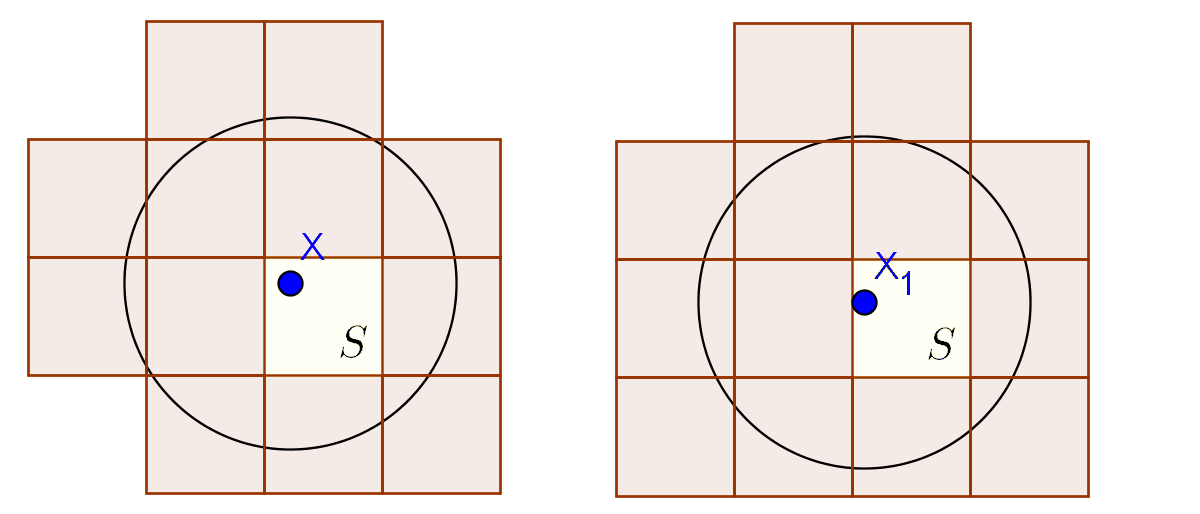}\label{fig:boundBtcase1}
\end{figure}

\paragraph{Case 2: $i-1 > s$.} Similarly, if a square $S' \in \Active_{i-1}$ contains a point mapped to the sample point $X$, then the distance from $X$ to $S'$ is at most $\sqrt{2} \cdot 2^{-(i-1)}$.  Since the disc with center $X$ and radius $\sqrt{2} \cdot 2^{-(i-1)}$ can intersect at most $12$ squares of side length $2^{-i}$ (see Figure \ref{fig:boundBtcase2}). Thus, by a similar argument, there are at most $4 \cdot 12 \delta_3 n$  points of $\pi$ contained in squares in $\Frozen_i$ mapped to $X$. 

\begin{figure}[h]
\centering
\caption{Case 2. \footnotesize{The yellow squares are the squares containing the sample points $X, X', X_1, X_1'$. The four circles with the four sample points as centers and radius $\sqrt{2}  \cdot 2^{-(i-1)}$  each intersects at most $12$ squares in $\Active_{s}$ respectively.
}}
\includegraphics[scale=0.3]{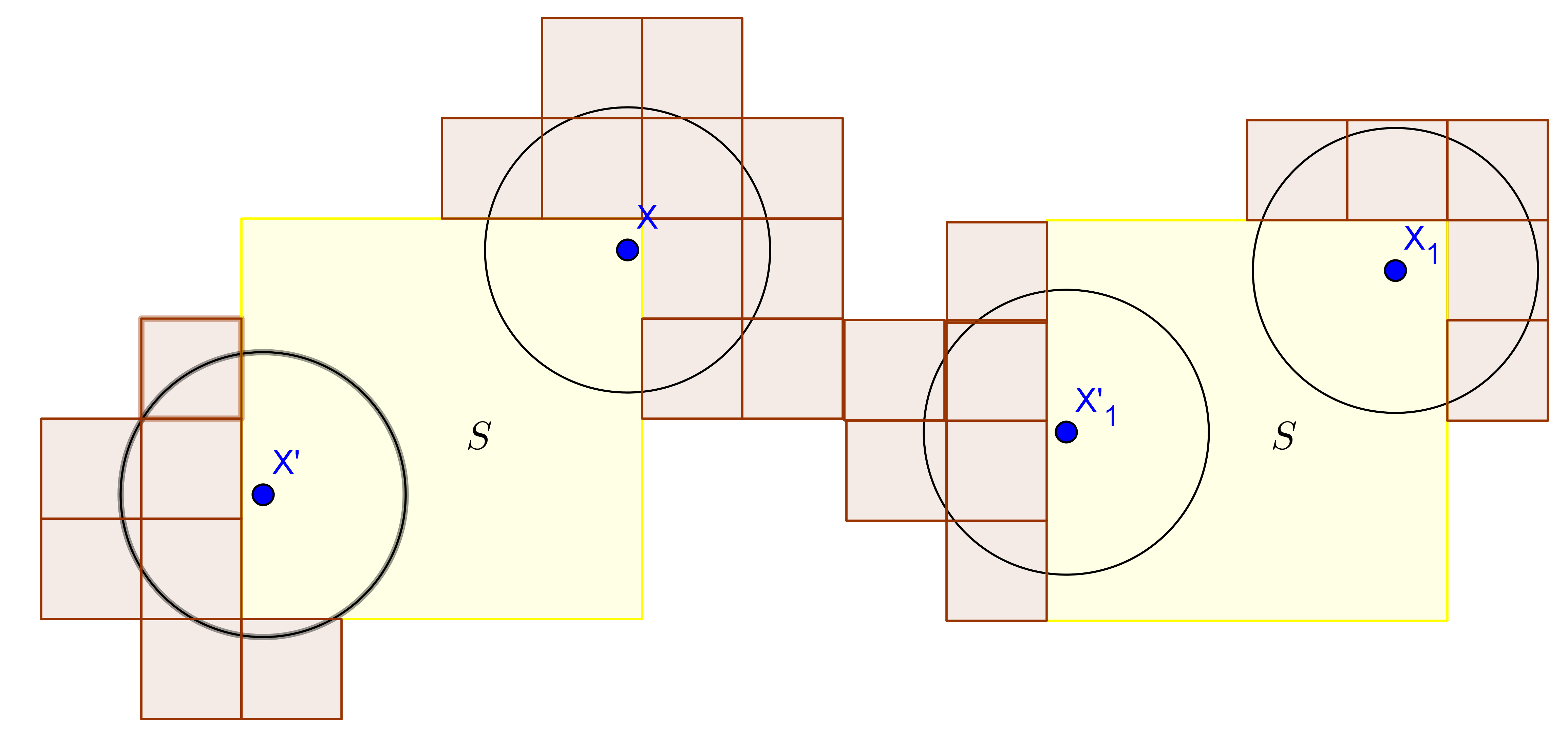}\label{fig:boundBtcase2}
\end{figure}

\paragraph{Case 3: $i-1 < s$.} By a similar argument, there are at most $4 \cdot 13 \delta_3 n$  points of $\pi$ contained in squares in $\Frozen_i$ mapped to $X$, since the disc with center $X$ and radius $\sqrt{2} \cdot 2^{-(i-1)}$ can intersect at most $13$ squares of side length $2^{-i}$ (see Figure \ref{fig:boundBtcase3}).

\begin{figure}[h]
\centering
\caption{Case 3. \footnotesize{The yellow squares are the squares $S$ containing the sample points $X, X_1, X_2, X_3$. The four circles with the four sample points as centers and radius $\sqrt{2}  \cdot 2^{-(i-1)}$ each intersects at most $13$ squares in $\Active_{s}$ respectively.
}}
\includegraphics[scale=0.7]{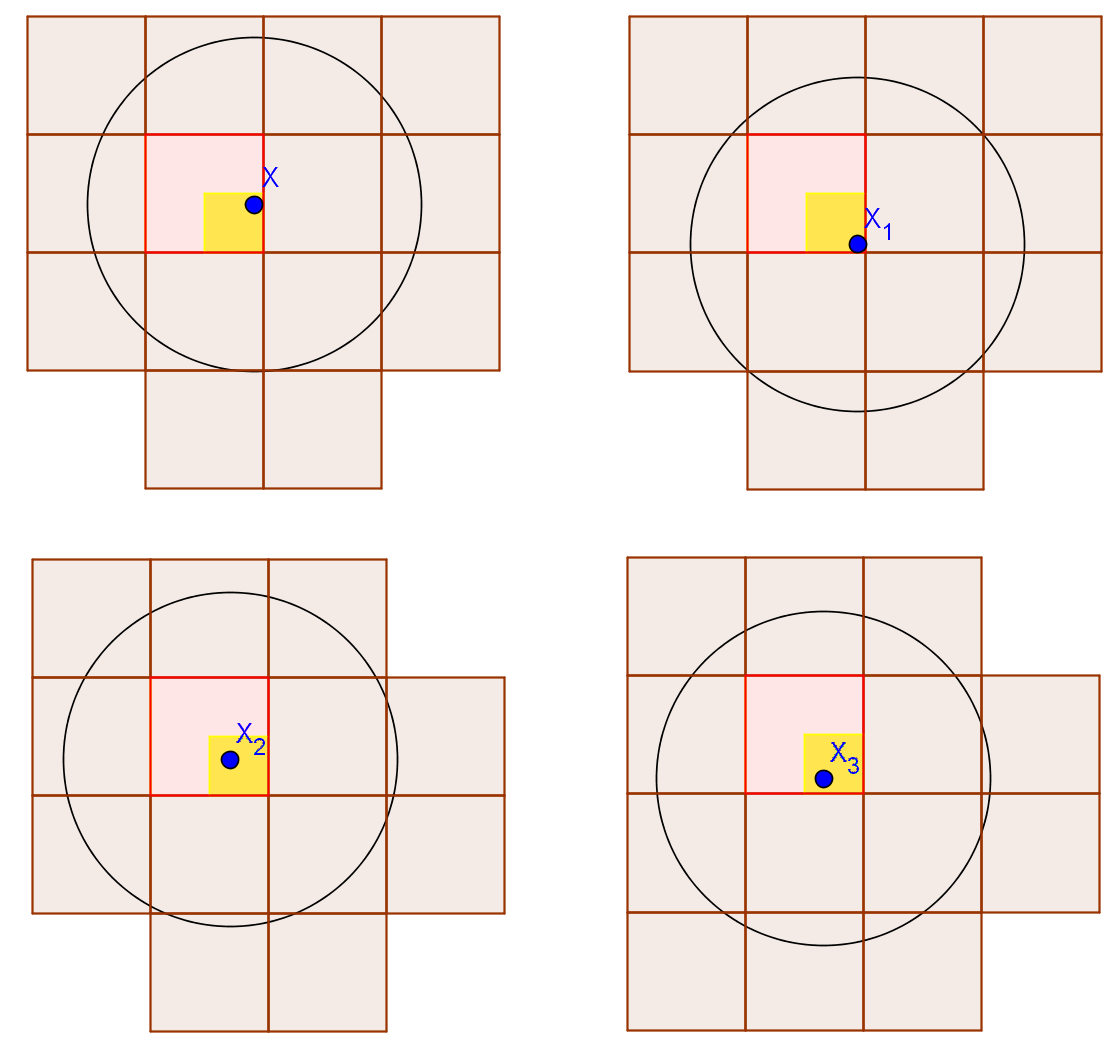}\label{fig:boundBtcase3}
\end{figure}

Combining the three cases above, we know that for each $0 \leq i \leq \log(1/d')$ and $i \neq s$, there are at most $56\delta_3 n$ points of $\pi$ contained in squares in $\Frozen_i$ mapped to the sample point $X$. And there are at most $56\delta_3 n$ points of $\pi$ in $\Frozen_s$ mapped to the sample point $X$. Therefore 
\beq
|B_j| \leq \sum_{ i = 0}^{\log(1/d')} 56\delta_3 n  = 56 \log(2/d')\delta_3 n. 
\eeq
\end{proof}

\section{One-sided property testing under the planar metrics}
We prove Theorem \ref{maincutnearlylinearoneside} in this section. 
The referee pointed out that an approach similar to that of Goldreich and Trevisan given as Proposition D.2 in \cite{GT} can be used, which shows that a (non-adaptive) two-sided tester for a hereditary property $\PP$ can be used as a black box to obtain a (canonical) one-sided tester for $\PP$. This argument simplifies our original proof and also provides a better bound. 

 We first describe the one-sided tester, which is a modification of the two-sided tester as described in Algorithm \ref{tester:1}.

With the parameters $M_1(\epsilon/2), M_2(\epsilon/2)$, and $M'$ as described in Theorem \ref{maincutnearlylinearoneside}, the tester works as follows. 
\begin{algorithm}
\caption{One-sided Tester}\label{tester:2}
Let $M'$ be as specified in Theorem \ref{newm}. Given a permutation $\pi$, we pick a subpermutation $\pi'$ of $\pi$ of size $M'$ uniformly at random. Output ``$\pi$ is in $\PP$'' if $\pi'$ is in $\PP$, and otherwise outupt ``$\pi$ is not in $\PP$''.
\end{algorithm}

\begin{proof}[Proof of Theorem \ref{maincutnearlylinearoneside}]
We first assume $32 M_1^5 k^*(M_1)/\epsilon^3 \leq 32 M_2^5 k^*(M_2)/\epsilon^3$. The other case is similar. 
If $\pi \in \PP$ then clearly the result holds since the property is hereditary. We can assume now that $\pi$ is $\epsilon$-far from $\PP$. It suffices to prove that $\text{Pr}(\pi' \in \PP) < \epsilon$.

Let $\pi''$ be a subpermutation of $\pi'$ of size $M_1(\epsilon/2)$ picked uniformly at random. Recall we have a two-sided tester in Algorithm \ref{tester:1} in Section \ref{sec:twosidedpt}; we call it Tester 1.  Given any permutation $\pi'$, applying Tester \ref{tester:1} with parameter $\epsilon/2$, we pick a subpermutation $\pi''$ of size $M_1(\epsilon/2)$ of $\pi'$ uniformly at random. For simplicity, we say ``$\pi'$ is accepted by Tester 1" if Tester 1 outputs ``$\pi' \in \mathcal{P}$.
Therefore, for $\pi'$ a subpermutation of $\pi$ of size $M'$ picked uniform at random from $\pi$, we have 
\[
\text{Pr}(\pi' \text{ is accepted by Tester \ref{tester:1}}) \geq \text{Pr}(\pi'  \text{ is accepted by Tester \ref{tester:1}} | \pi' \in \PP) \text{Pr}(\pi' \in \PP) \geq (1-\epsilon/2)  \text{Pr}(\pi' \in \PP).
\]
The second inequality holds by a direct consequence of Theorem \ref{maincutnearlylinear}. Since the size of $\pi'$ is at least the value of $n_0(M_1(\epsilon/2), \epsilon/2)$ in Theorem \ref{maincutnearlylinear} and $\pi' \in \PP$, a subpermutation $\pi''$ of size $M_1(\epsilon/2)$ of $\pi'$ will be accepted with probability at least $1-\epsilon/2$. 

Furthermore, since $\pi$ is $\epsilon$-far from $\PP$ and is of size at least  the value $n_0(M_1(\epsilon/2), \epsilon/2)$ in Theorem \ref{maincutnearlylinear}, and $\pi''$ is a subpermutation of size $M_1(\epsilon/2)$ of $\pi$ picked uniformly at random, Theorem \ref{maincutnearlylinear} tells us 
\[\text{Pr}(\pi' \text{ is accepted by Tester \ref{tester:1}}) \leq \epsilon/2.\]
Therefore we have 
\[  \text{Pr}(\pi' \in \PP) \leq (\epsilon/2) / (1-\epsilon/2) < \epsilon.\]

\end{proof}

\noindent {\bf Acknowledgement.} The authors would like to thank the referees for many valuable comments. In particular, one of the referees pointed out the  simplified proof for Theorem \ref{maincutnearlylinearoneside}.


\begin{thebibliography}{}

\bibitem{Alon} N. Alon, Testing subgraphs in large graphs,                                                                                                                                                                          
{\it Random Structures Algorithms} {\bf 21} (2002), 359--370. 

\bibitem{AF} N. Alon and J. Fox, Easily testable graph properties, {\it Combin. Prob. Comput.} {\bf 24} (2015), 646--657. 

\bibitem{AlSh} N. Alon and A. Shapira, A  characterization  of  easily  testable  induced  subgraphs, {\it Combin. Probab. Comput.} {\bf 15}  (2006), 791--805.

\bibitem{AS08} N. Alon and A. Shapira, A characterization of the (natural) graph properties testable with one-sided error,  in Proc. of FOCS 2005,  429--438, {\it SIAM  J.  Comput.} (Special Issue on FOCS '05) {\bf 37} (2008), 1703--1727.

\bibitem{CF1} D. Conlon and J. Fox, Bounds for graph regularity and removal lemmas, {\it Geom. Funct. Anal.} {\bf 22} (2012), 1191--1256.

\bibitem{CF2} D. Conlon and J. Fox, Graph removal lemmas, in Surveys in Combinatorics 2013, London Mathematical Society Lecture Note Series, Vol. 409, Cambridge University Press, Cambridge, 2013, pp. 1--49.

\bibitem{Cooper} J. N. Cooper, A permutation regularity lemma, {\it Electron. J. Combin.} {\bf 13} (2006). Research Paper 22, 20pp.

\bibitem{D88} P. Diaconis, Group Representations in Probability and Statistics, Inst. of Math. Statistics, Hayward, CA.

\bibitem{Diaconis} P. Diaconis, personal communication. 

\bibitem{DG} P. Diaconis and R. L. Graham, Spearman's footrule as a measure of disarray, {\it J. Roy. Statist. Soc. Ser. B} {\bf 39} (1977), 262--268.

\bibitem{ES35} P. Erd\H{o}s and G. Szekeres, A combinatorial problem in geometry, {\it  Compositio Math.} {\bf 2} (1935), 463--470. 

\bibitem{F} J. Fox, Stanley-Wilf limits are typically exponential, {\it Adv. Math.}, to appear. 

\bibitem{FoLo} J. Fox and L. M. Lov\'asz, A tight bound for Green's arithmetic triangle removal lemma in
vector spaces, 2016, arXiv:1606.01230.

\bibitem{FLZ} J. Fox, L. M. Lov\'asz, and Y. Zhao, On regularity lemmas and their algorithmic applications, 2016, arXiv:1604.00733.

\bibitem{FW2} J. Fox and F. Wei, Strongly testing hereditary permutation properties with polynomial query complexity, in preparation.  

\bibitem{FH92} Z. F\"uredi and P. Hajnal, Davenport--Schinzel theory of matrices, {\it Discrete Math.} {\bf 103} (1992), 233--251.

\bibitem{GGR} O. Goldreich, S. Goldwasser and D. Ron, Property testing and its applications to learning and
approximation,
{\it J. ACM} {\bf 45} (1998), 653--750.

\bibitem{GT} O. Goldreich and L. Trevisan, Three theorems regarding testing graph properties, {\it Random Structures  Algorithms} {\bf 23} (2003), 23--57.

\bibitem{HKS12} C. Hoppen, Y. Kohayakawa, and R. M. Sampaio, A note on permutation regularity, {\it Discrete Appl. Math.} {\bf 160} 
(2012), 2716--2727.

\bibitem{HKMRS} C. Hoppen, Y. Kohayakawa, C. G. Moreira, B. R\'ath, and R. M. Sampaio, Limits of permutation sequences, {\it J. Combin. Theory Ser. B} {\bf 103} (2013), 93--113. 

\bibitem{HKMS} C. Hoppen, Y. Kohayakawa, C. G. Moreira, and R. M. Sampaio, Testing permutation properties through subpermutations, {\it Theoret. Comput. Sci.} {\bf 412} (2011), 3555--3567. 

\bibitem{Kl00} M. Klazar, The F\"uredi-Hajnal conjecture implies the Stanley-Wilf conjecture, in: D. Krob, A.A. Mikhalev, A.V. Mikhalev (Eds.), Formal Power Series and Algebraic Combinatorics, Springer, Berlin, 2000, pp. 250--255.

\bibitem{KK} T. Klimo{\v{s}}ov{\'a} and D. Kr{\'a}l', 
Hereditary properties of permutations are strongly testable, 2012, arxiv.org/pdf/1208.2624. An early version appeared in {\it Proceedings of the Twenty-Fifth Annual ACM-SIAM Symposium on Discrete Algorithms}, 1164--1173, ACM, New York, 2014. 

\bibitem{Kruskal} J. B. Kruskal, Jr., Monotonic Subsequences, {\it Proc. Amer. Math. Soc.} {\bf 4} (1953), 264--273. 

\bibitem{Lov} L. Lov\'asz, Large networks and graph limits, American Mathematical Society Colloquium Publications,
vol. 60, American Mathematical Society, Providence, RI, 2012.

\bibitem{MT} A. Marcus and G. Tardos, Excluded permutation matrices and the Stanley-Wilf conjecture, {\it J.
Combin. Theory Ser. A} \textbf{107} (2004), 153--160.

\bibitem{Monge} G. Monge, M\'emoire sur la th\'eorie des d\'eblais et des remblais, {\it Histoire de l'Acad\'emie Royale des Sciences de Paris, avec les M\'emoires de Math\'ematique et de Physique pour la m{\^e}me ann\'ee} (1781), 666--704.

\bibitem{RS} R. Rubinfield and M. Sudan, Robust characterization of polynomials with applications to program testing,
{\it SIAM J. Comput.} {\bf 25} (1996), 252--271.

\bibitem{RTS} Y. Rubner, C. Tomasi, and L. J. Guibas, The earth mover's distance as a metric for image retrieval, {\it Int. J. Comput. Vision.} {\bf 40} (2000), 99--121. 

\bibitem{V} P. Vaidya, Geometry helps in matching, {\it SIAM J. Comput.} {\bf 18} (1989), 1201--1225. 

\end{thebibliography}
\end{document}